\numberwithin{equation}{section}
\theoremstyle{plain}
\newtheorem{thm}[equation]{Theorem}
\newtheorem{lemma}[equation]{Lemma}
\newtheorem{cor}[equation]{Corollary}
\newtheorem{prop}[equation]{Proposition}
\theoremstyle{definition}
\newtheorem{dfn}[equation]{Definition}
\theoremstyle{remark}
\newtheorem{rmk}[equation]{Remark}
\newcommand{\circleland}{ 
	\mathbin{
		\mathchoice
		{\buildcircleland{\displaystyle}}
		{\buildcircleland{\textstyle}}
		{\buildcircleland{\scriptstyle}}
		{\buildcircleland{\scriptscriptstyle}}
	} 
}
\newcommand\buildcircleland[1]{%
	\begin{tikzpicture}[baseline=(X.base), inner sep=0, outer sep=0]
	\node[draw,circle] (X)  {$#1\land$};
	\end{tikzpicture}%
}
\title{On the Bach and Einstein equations in presence of a field}
\author{Andrea Anselli\thanks{\url{andrea.anselli@unimi.it}}\\ Dipartimento di Matematica \textquotedblleft Federigo Enriques\textquotedblright, Universit\'{a} degli Studi di Milano,\\
	Via Cesare Saldini 50, 20133 Milano, Italia.\thanks{No longer affiliated}}
\date{}
\begin{document}
	
	\maketitle

	\begin{abstract}
		The aim of this paper is to introduce and justify a possible generalization of the classic Bach field equations on a four dimensional smooth manifold $M$ in presence of field $\varphi$, that in this context is given by a smooth map with source $M$ and target another Riemannian manifold. Those equations are characterized by the vanishing of a two times covariant, symmetric, traceless and conformally invariant tensor field, called $\varphi$-Bach tensor, that in absence of the field $\varphi$ reduces to the classic Bach tensor. We provide a variational characterization for $\varphi$-Bach flat manifolds and we do the same also for harmonic-Einstein manifolds, i.e., solutions of the Einstein field equations with source the conservative field $\varphi$. We take the opportunity to discuss a generalization of some related topics: the Yamabe problem, the image of the scalar curvature map, warped product solutions and static manifolds.
	\end{abstract}

	\tableofcontents
	
	\section{Introduction}
	
	The Einstein field equations on a four dimensional Lorentzian manifolds $(M,g)$ are given by
	\begin{equation*}\label{einst field equation intro}
	G+\Lambda g=\alpha T,
	\end{equation*}
	where
	\begin{itemize}
		\item $G$ is the Einstein tensor of $(M,g)$, that is,
		\begin{equation*}
		G:=\mbox{Ric}-\frac{S}{2}g,
		\end{equation*}
		where $\mbox{Ric}$ and $S$ denote, respectively, the Ricci and the scalar curvature of $(M,g)$;
		\item $T$ is the stress-energy tensor, a symmetric two times covariant tensor, supposed to represent the density of all the energies, momenta and stresses of the sources, that has to be divergence free in order to satisfy the conservation laws, also called equations of motion (geometrically, the fact that $T$ is divergence free follows from the fact that the Einstein tensor is divergence free, since Schur's identity holds).
		\item $\Lambda\in\mathbb{R}$ is the cosmological constant;
		\item $\alpha$ is the positive constant given by
		\begin{equation}\label{alpha per eq campo einstein intro}
		\alpha=\frac{8\pi \mathcal{G}}{c^4},
		\end{equation}
		where $\mathcal{G}$ is Newton's gravitational constant and $c$ is the speed of light in vacuum.
	\end{itemize}
	The study of solutions of the Einstein field equations in vacuum, i.e.,
	\begin{equation}\label{einst equation vacuum}
		G+\Lambda g=0,
	\end{equation}
	led to the definition of Einstein manifolds and, when the cosmological constant is not included, of Ricci flat manifolds. Those definitions have been easily extended to smooth manifolds of any dimension $m\geq 2$ endowed with any semi-Riemannian (mostly, Riemannian) metric and, at least for mathematicians, they are geometrical objects of great interest and we refer to \cite{B} for an overview on the topic. We just mention that Einstein manifolds of dimension $m\geq 3$ are characterized by the vanishing of the traceless part of the Ricci tensor and, as a consequence of Schur's identity (the Einstein tensor $G$ is divergence free), their scalar curvature $S$ is (locally) constant on $M$. Going back to solutions of \eqref{einst equation vacuum} the cosmological constant $\Lambda$ is given by a constant positive multiple of the scalar curvature.
	
	Almost all the equations of mathematical physics admit a variational characterization and the Einstein equations are not an exception. For simplicity, consider a closed and orientable smooth manifold $M$ of dimension $m\geq 3$ and denote by $\mathcal{M}$ the set of all the Riemannian metrics on $M$. Let $\mathcal{M}_1$ be the subset of $\mathcal{M}$ of metrics of total volume equal to $1$. The functional of total scalar curvature $\mathcal{S}:\mathcal{M}\to \mathbb{R}$, introduced by D. Hilbert, is given by, for every $g\in\mathcal{M}$,
	\begin{equation*}
		\mathcal{S}(g):=\int_MS_g\mu_g,
	\end{equation*}
	where $S_g$ and $\mu_g$ are, respectively, the scalar curvature and the volume form of $(M,g)$. Critical metrics of $\mathcal{S}$ on $\mathcal{M}$ coincide with Ricci flat metrics on $M$ while critical metrics of $\mathcal{S}$ on $\mathcal{M}_1$ (or equivalently, critical metrics of the normalized total scalar curvature on $\mathcal{M}$) coincide with Einstein metrics on $M$. For the details see \cite{B} or \cite{S}.
	
	Since Einstein metrics have constant scalar curvature one may ask weather or not also the largest class of metrics with constant scalar curvature admit a variational characterization. The answer is yes, a metric $g\in\mathcal{M}$ has constant scalar curvature if and only if is a critical point of $\mathcal{S}$ in $[g]\cap \mathcal{M}_1$ (see Proposition 4.25 of \cite{B}). This led naturally to the study of the famous Yamabe problem, that consists in finding pointwise conformal metrics with constant scalar curvature on a Riemannian manifold. We refer to \cite{LP} for a detailed proof of its solution.

	Another interesting feature of the scalar curvature is that we are able to characterize its image when we look at it as a map $g\in\mathcal{M}\mapsto S_g$, see \cite{FM} or Section 4.E of \cite{B} for an overview. The key step is to prove the surjectivity of its linearization at $g\in\mathcal{M}$, or equivalently, the injectivity of its adjoint. We have that if $u$ belongs to kernel of the adjoint map then either $(M,g)$ is Ricci flat (and in this case $u$ is constant) or the product $M\times \mathbb{R}$, endowed with the metric $g-u^2dt\otimes dt$, where $t$ is the coordinate on $\mathbb{R}$, is an Einstein manifold outside the zero locus of $u$, where it degenerates. In the latter case, if $(M,g)$ is a three-dimensional Riemannian manifold, then the Lorentzian warped product $M\times_u \mathbb{R}$ is a static spacetime (i.e., it admits a timelike and irrotational Killing vector field) that solves the vacuum Einstein field equations. In \cite{C}, J. Corvino studied the same problem for complete non-compact Riemannian manifolds.\\
	
	In some circumstances one may be interested in field equations for the metric $g$ on $M$ that are conformal invariant, meaning that if $g$ is a solution then every metric that is pointwise conformal equivalent to $g$, i.e., every metric in the conformal equivalence class $[g]$, is a solution too. A disadvantage of the Einstein field equations and, consequently of Einstein manifolds, is that they are not, in general, conformally invariant. Although it is possible to study conformally Einstein manifolds, that are given by semi-Riemannian manifolds that after a pointwise conformal change of metric are Einstein manifolds, in this work we are more interested to a different approach. A century ago, in \cite{Ba}, R. Bach introduced  a two times covariant symmetric tensor $B$ on a semi-Riemannian manifold $(M,g)$, nowadays called Bach tensor. The Bach tensor has some particular properties: it is traceless, quadratic in the Riemann tensor and, furthermore, for four dimensional manifolds it is conformally invariant and divergence free. The Bach field equations in vacuum for a four dimensional Lorentzian manifold $(M,g)$ are given by
	\begin{equation}\label{bach equation intro}
		B=0,
	\end{equation}
	they are the conformally invariant counterpart of \eqref{einst equation vacuum} (since $B$ is traceless we cannot have a cosmological constant). Einstein and conformally-Einstein manifolds solves the Bach equations, hence they admits more solutions than Einstein equations, even with non-constant scalar curvature. Bach flat Riemannian metrics on a four dimensional closed orientable smooth manifold $M$ admit the following variational characterization: they are critical points of the functional
	\begin{equation}\label{Bach functional intro}
		\mathcal{B}:\mathcal{M}\to \mathbb{R}, \quad \mathcal{B}(g):=\int_M|W_g|^2_g\mu_g,
	\end{equation}
	where $W_g$ is the Weyl tensor of $(M,g)$, the \textquotedblleft conformal invariant part of the Riemann tensor\textquotedblright. Since the $(1,3)$ version of the Weyl tensor is a conformal invariant tensor it is easy to realize that the functional $\mathcal{B}$ is conformal invariant, i.e., $\mathcal{B}(\widetilde{g})=\mathcal{B}(g)$ for every $\widetilde{g}\in[g]$. This provide an easy way to see that the Bach tensor, the gradient of $\mathcal{B}$, is a conformal invariant tensor for four dimensional manifolds, at least when they are closed, oriented and Riemannian.\\
	
	We now leave the vacuum case, coming to the description of our contribution. One of the easiest way to allow the presence of energy and matter is to consider the presence of a field $\varphi$. More precisely: let $(M,g)$ be a four dimensional Lorentzian manifold and $\varphi:M\to N$ a smooth map, where the target $(N,\eta)$ is a Riemannian manifold. To the field $\varphi$ we can associate a energy-stress tensor $T=T_{\varphi}$, as P. Baird and J. Eells did in \cite{BaE} (see \eqref{stress energy tensor} for its definition). In order to be an admissible energy-stress tensor it must satisfy the conservation laws. A computation shows that in case $\varphi$ is a wave map (i.e., a harmonic map with source a Lorentzian manifold) then $T_{\varphi}$ satisfies the conservation laws, compare with \hyperref[rmk stress energy tensor har e biharm map]{Remark \ref*{rmk stress energy tensor har e biharm map}}. The converse implication holds, that is, a smooth map that satisfies the conservation laws is harmonic, providing that $\varphi$ is a submersion a.e., although is not true in general.
	
	Now the Einstein fields equations for the four dimensional Lorentzian manifold $(M,g)$ when the presence of field and matter is described by the wave map $\varphi$ takes the form
	\begin{equation}\label{Einst field eq intro con phi}
	G+\Lambda g=\alpha T_{\varphi},
	\end{equation}
	where $G$ is the Einstein tensor of $(M,g)$, $\alpha$ is given by \eqref{alpha per eq campo einstein intro} and $\Lambda\in\mathbb{R}$ is the cosmological constant.

	Now, as did for the vacuum case, we can forget that $(M,g)$ is a four dimensional manifold and that $\alpha$ is given by \eqref{alpha per eq campo einstein intro} and study semi-Riemannian manifolds $(M,g)$ of dimension $m\geq 3$ satisfying \eqref{Einst field eq intro con phi} for some $\Lambda,\alpha\in\mathbb{R}$ and a wave map $\varphi:M\to N$, where the target $(N,\eta)$ is a Riemannian manifold. We call them harmonic-Einstein manifolds (with respect to $\varphi$ and $\alpha$), see \hyperref[def harm einst mani]{Definition \ref*{def harm einst mani}}. Those class of semi-Riemannian manifold includes the Einstein manifolds and, in dimension $m\geq 3$, also its elements are characterized by the vanishing of the traceless part of a symmetric two times covariant tensor (see \hyperref[prop harm einst soddisfano einst equ]{Proposition \ref*{prop harm einst soddisfano einst equ}}). This tensor, that plays the role of Ricci's tensor for harmonic-Einstein manifolds, is the $\varphi$-Ricci tensor, given by
	\begin{equation*}
		\mbox{Ric}^{\varphi}:=\mbox{Ric}-\alpha \varphi^*\eta
	\end{equation*}
	and first used by R. M\"{u}ller in \cite{M} when dealing with the Ricci-harmonic flow, a combination of the Ricci flow with the heat flow of a smooth map. Notice that in order to define the $\varphi$-Ricci tensor we need a semi-Riemannian metric $g$, a smooth map $\varphi$ and a constant $\alpha$. The trace of the $\varphi$-Ricci tensor is called $\varphi$-scalar curvature and it is given by $S^{\varphi}=S-\alpha |d\varphi|^2$, where $|d\varphi|^2=\mbox{tr}(\varphi^*\eta)$ is the Hilbert-Schmdit norm of the differential of $\varphi$. The generalized Schur's identity \eqref{div of phi Ricci} gives that harmonic-Einstein manifolds of dimension $m\geq 3$ have constant $\varphi$-scalar curvature and the one with vanishing $\varphi$-scalar curvature are called $\varphi$-Ricci flat manifolds. Going back to \eqref{Einst field eq intro con phi}, the cosmological constant $\Lambda$ is a constant positive multiple of the $\varphi$-scalar curvature.
	
	The similarities between the theory of harmonic-Einstein manifolds and the classical theory of Einstein manifolds are not over. Harmonic-Einstein manifolds admits a variational characterization too. Let $M$ be a closed and oriented smooth manifold of dimension $m\geq 3$ and $(N,\eta)$ be a fixed target Riemannian manifold. We denote by $\mathcal{F}$ the set of all the smooth maps $\varphi:M\to N$. Moreover, fix $\alpha\in\mathbb{R}\setminus\{0\}$. Then harmonic-Einstein structures on $M$, with respect to $\alpha\in\mathbb{R}$, are critical points of the functional of total $\varphi$-scalar curvature
	\begin{equation*}
		\mathcal{S}:\mathcal{M}_1\times \mathcal{F}\to\mathbb{R}, \quad \mathcal{S}(g,\varphi):=\int_MS^{\varphi}_g\mu_g.
	\end{equation*}
	Considering critical points of $\mathcal{S}$ on $\mathcal{M}\times \mathbb{R}$ we obtain $\varphi$-Ricci flat manifolds. Those results are stated and proved in \hyperref[section Variational derivation of the harmonic-Einstein equations]{Section \ref*{section Variational derivation of the harmonic-Einstein equations}}, where we also discuss the what happens for surfaces, see \hyperref[remark per superfici]{Remark \ref*{remark per superfici}}.
	
	Considering, for a fixed $\varphi\in\mathcal{F}$, the restriction of the total $\varphi$-scalar curvature to $\mathcal{M}_1\cap[g]$ its critical points are given by Riemannian metrics of constant $\varphi$-scalar curvature, see \hyperref[prop phi curv scalar const punto critico in classe confome]{Proposition \ref*{prop phi curv scalar const punto critico in classe confome}}. This led us to formulate the $\varphi$-Yamabe problem: on a compact Riemannian manifold $(M,g)$ of dimension $m\geq 3$ there exists $\widetilde{g}\in [g]$ such that $S^{\varphi}_{\widetilde{g}}$ is constant? In \hyperref[section scalar curvature restricted to conformal classes of metrics]{Section \ref*{section scalar curvature restricted to conformal classes of metrics}} we just give the definition of the $\varphi$-Yamabe constant, the first step in the solution of this problem. We expect the solution of the $\varphi$-Yamabe problem to be very similar and very close to the one of the classic Yamabe problem but we postpone it to some future work.
	
	Proceeding with the similarities, in \hyperref[section The linearization of the]{Section \ref*{section The linearization of the}} we study the adjoint of the linearization of the $\varphi$-scalar curvature map $(g,\varphi)\in\mathcal{M}\times \mathcal{F}\mapsto S^{\varphi}_g$ on a smooth manifold $M$. We show that a function $u$ belongs to its kernel if either $(M,g)$ is $\varphi$-Ricci flat (and in this case $u$ is constant) or the product $M\times \mathbb{R}$, endowed with the metric $g-u^2dt\otimes dt$ is a harmonic-Einstein manifold with respect to the extension $\bar{\varphi}$ of $\varphi$ to $M\times \mathbb{R}$, outside the zero locus of $u$, where it degenerates (see \hyperref[prop aggiunto iniettivo]{Proposition \ref*{prop aggiunto iniettivo}}). This is the first step in the study of the image of the $\varphi$-scalar curvature map, that we also postpone to some future work. \hyperref[section Warped products]{Section \ref*{section Warped products}} is devoted to the study harmonic-Einstein manifolds that arise as semi-Riemannian warped product metric with respect to the lifting $\bar{\varphi}$ of a smooth map $\varphi$ with source the base of the warped product, see \hyperref[thm harm einst warp prod]{Theorem \ref*{thm harm einst warp prod}}. An interesting example of the above situation is given $\varphi$-static harmonic-Einstein manifold, a concept related to the one of static spacetime: see \hyperref[def phi static]{Definition \ref*{def phi static}} and the remarks below.
	
	Finally, we now come to conformal invariant theories in presence of the field $\varphi$. At this point the definition of conformally harmonic-Einstein is straightforward: we refer to \cite{A} for it and for results on regarding those manifolds. In this article we do not want to focus on them, instead we are interested in a theory similar to the one of R. Bach mentioned above. To obtain this goal we need to define on a semi-Riemannian manifold $(M,g)$ endowed with a smooth map $\varphi$ a two times covariant symmetric tensor that, at least for four dimensional Lorentzian manifolds, is conformally invariant and that when $\varphi$ is constant reduces to the Bach tensor $B$. This tensor, that appears for the first time in \cite{A}, is the $\varphi$-Bach tensor $B^{\varphi}$, see \hyperref[section phi bach]{Section \ref*{section phi bach}} for its definition. The proof of its conformal invariance is contained in \hyperref[cor phi bach invariante conforme per m uguale a 4]{Corollary \ref*{cor phi bach invariante conforme per m uguale a 4}} and it is the main result of \hyperref[section Transformation laws of curvature under a conformal change of metric]{Section \ref*{section Transformation laws of curvature under a conformal change of metric}}. Furthermore, the $\varphi$-Bach tensor is traceless for four dimensional manifolds (for higher dimensions see \eqref{traccia phi bach}) and, analogously to the $\varphi$-Ricci tensor, it depends on a scale factor $\alpha\in\mathbb{R}$. The equation
	\begin{equation*}
		B^{\varphi}=0
	\end{equation*}  
	seems a natural generalization of \eqref{bach equation intro}, indeed four dimensional harmonic-Einstein and conformally harmonic-Einstein manifolds are solutions of the above.
	
	To justify that the above equation is indeed the generalization of the Bach field equation in the presence of the field $\varphi$ the only thing that remains is to provide a variational characterization. In \hyperref[section phi curvature]{Section \ref*{section phi curvature}}, where we briefly recall the definitions and the properties of the $\varphi$-curvatures (whose detailed proof is contained in \cite{A}), one may find the definition of the $\varphi$-Weyl tensor $W^{\varphi}$, i.e., a generalization of the Weyl tensor in presence of the field $\varphi$. Assuming that the $\varphi$-Weyl tensor is the \textquoteleft right\textquotedblright generalization of the Weyl tensor, one may think that $\varphi$-Bach flat Riemannian metrics on a four dimensional closed orientable smooth manifold $M$ are critical points of the functional
	\begin{equation*}
	\mathcal{M}\times \mathcal{F}\to \mathbb{R}, \quad (g,\varphi)\mapsto\int_M|W^{\varphi}_g|^2_g\mu_g.
	\end{equation*}
	This is not true: the appropriate functional is given by
	\begin{equation}\label{phi bach functional intro}
		\mathcal{B}:\mathcal{M}\times \mathcal{F}\to \mathbb{R}, \quad \mathcal{B}(g,\varphi)=\int_M\left(S_2(A^{\varphi}_g)-\frac{\alpha}{2}|\tau^g(\varphi)|^2\right)\mu_g,
	\end{equation}
	where $\tau^g(\varphi)$ denotes the tension field of $\varphi$ and $S_2(A^{\varphi}_g)$ denotes the second elementary symmetric polynomial in the eigenvalues of $A^{\varphi}_g$, the $\varphi$-Schouten tensor of $(M,g)$ (see  \hyperref[section phi curvature]{Section \ref*{section phi curvature}} for the definition of the $\varphi$-Schouten tensor and \hyperref[remark per S 2 schouten]{Remark \ref*{remark per S 2 schouten}} the one of $S^2(A^{\varphi}_g)$). By choosing as $\varphi$ any constant map, for every $g\in\mathcal{M}$ the above yields
	\begin{equation*}
		\mathcal{B}(g,\varphi)=\int_MS_2(A_g)\mu_g
	\end{equation*}
	and, in view of the Chern-Gauss-Bonnet formula for four dimensional manifolds, its critical points are the ones of the standard functional \eqref{Bach functional intro}.
	
	The aim of \hyperref[section Variational characterization of four dimensional]{Section \ref*{section Variational characterization of four dimensional}} is to prove \hyperref[theom phi bach flat punti critici bach]{Theorem \ref*{theom phi bach flat punti critici bach}}: on a four dimensional closed and orientable manifold $M$ the pair $(g,\varphi)$ is critical for $\mathcal{B}$ if and only if the $\varphi$-Bach tensor together with another tensor, denoted by $J$, vanish. The tensor $J$, defined in \eqref{def of J 4}, is strictly connected to the bi-tension of $\varphi$ (we can say, in a certain sense, that is a conformal invariant bi-tension for four dimensional manifold). The fact that the equation $B^{\varphi}=0$ is coupled with another one, involving the map $\varphi$, is not a surprise: the same happens for harmonic-Einstein manifolds where the equation for $G+\Lambda g=\alpha T_{\varphi}$ is coupled with $\tau(\varphi)=0$. Instead, the surprising fact is that $J$ is strictly related to the divergence of $\varphi$-Bach and when $\varphi$ is a submersion a.e. the vanishing of $J$ is equivalent to the vanishing of $\mbox{div}(B^{\varphi})$, see \hyperref[rmk phi bach privo di div per m uguale a 4]{Remark \ref*{rmk phi bach privo di div per m uguale a 4}}. Since in the definition \eqref{phi bach functional intro} the total bi-energy of the map $\varphi$, that is,
	\begin{equation*}
	E_2^g(\varphi)=\frac{1}{2}\int_M|\tau^g(\varphi)|\mu_g,
	\end{equation*}
	is involved, the tensor $J$ is related to the bi-tension of $\varphi$.
	
	\section{Conventions, notations and preliminaries}
	
	Let $M$ be a smooth, connected manifold without boundary and of dimension $m\geq 2$. Let $g$ be a pseudo-Riemannian metric on $M$ and $\nabla$ the Levi-Civita connection of $(M,g)$. For the Riemann tensor $\mbox{Riem}$ of $(M,g)$ we use the sign conventions
	\begin{equation*}
		R(X,Y)Z=\nabla_X(\nabla_YZ)-\nabla_Y(\nabla_XZ)-\nabla_{[X,Y]}Z \quad \mbox{ for every } X,Y,Z\in\mathfrak{X}(M),
	\end{equation*}
	where $\mathfrak{X}(M)$ is the $\mathcal{C}^{\infty}(M)$-module of vector fields on $M$ and $[\,,\,]$ denotes the Lie bracket, and
	\begin{equation*}
		\mbox{Riem}(W,Z,X,Y)=g(R(X,Y)Z,W) \quad \mbox{ for every } X,Y,Z,W\in\mathfrak{X}(M).
	\end{equation*}
	
	Let $(N,\eta)$ be a Riemannian manifold of dimension $n$ and $\varphi:M\to N$ a smooth map.
	
 	In order to carry on computations we will mostly use (with some exception in \hyperref[section variations]{Section \ref*{section variations}}) the moving frame formalism introduced by E. Cartan. For a crash course in the Riemannian case see Chapter 1 of \cite{AMR} while for the semi-Riemannian case we refer to Chapter 5 of \cite{CCL}.
 	
 	We fix the indexes ranges
 	\begin{equation*}
 	1\leq i,j,k,t,\ldots \leq m, \quad 1\leq a,b,c,d\ldots \leq n
 	\end{equation*}
 	and from now on we adopt the Einstein summation convention over repeated indexes. In a neighborhood of each point of $M$ we can write
		\begin{equation*}
			g=\eta_{ij}\theta^i\otimes \theta^j
		\end{equation*}
		where $\{\theta^i\}$ is a local $g$-orthonormal coframe and
		\begin{equation}\label{def eta i j}
			\eta_{ij}=\begin{cases}
			0 \quad &\mbox{ if } i\neq j\\
			1 \quad &\mbox{ if } i=j\leq k\\
			-1 \quad &\mbox{ if } i=j>k,
			\end{cases}
		\end{equation}
		where the signature of the metric is $(k,m-k)$. When $g$ is a Riemannian metric then $\eta_{ij}=\delta_{ij}$, the Kronecker delta. Denoting by $\{e_i\}$ the local frame dual to $\{\theta^i\}$ it is easy to see that $\{e_i\}$ is a local $g$-orthonormal frame, i.e., $g(e_i,e_j)=\eta_{ij}$.
		
		The Levi-Civita connection forms $\{\theta^i_j\}$ are characterized by the validity of the first structure equations
		\begin{equation}\label{first struct equation}
			d\theta^i+\theta^i_j\wedge \theta^j=0
		\end{equation}
		and by the relation
		\begin{equation}\label{rel simmetria con gli eta per forme levi civita}
			\eta_{ik}\theta^k_j+\eta_{kj}\theta^k_i=0.
		\end{equation}
		In the following we will use the metric to raise and lower the indexes, for instance
		\begin{equation*}
		\theta_i:=\eta_{ij}\theta^j, \quad \theta_{ij}:=\eta_{ik}\theta_j^k.
		\end{equation*}
		Then the relation \eqref{rel simmetria con gli eta per forme levi civita} is equivalent to the skew-symmetry
		\begin{equation}\label{rel simmetria per forme levi civita}
			\theta_{ij}+\theta_{ji}=0.
		\end{equation}
		
		The curvature forms $\{\Theta^i_j\}$ are given by
		\begin{equation}\label{comp of curv forms are components of Riem}
		\Theta^i_j=\frac{1}{2}R^i_{jkt}\theta^k\wedge \theta^t,
		\end{equation}
		where $R_{ijkt}$ are the components of the $(0,4)$-version of the Riemann tensor $\mbox{Riem}$ of $(M,g)$,
		\begin{equation*}
			\mbox{Riem}=R_{ijkt}\theta^i\otimes \theta^j\otimes \theta^k\otimes \theta^t,
		\end{equation*}
		and they satisfy the second structure equations
		\begin{equation*}
		\Theta^j_i=d\theta^j_i+\theta_{ki}\wedge\theta^{kj},
		\end{equation*}
		where we denoted
		\begin{equation*}
			\theta^{ij}:=\eta^{jk}\theta^i_k, \quad \theta_{ij}=\eta_{ik}\theta_j^k.
		\end{equation*}
		
		The symmetries of the Riemann tensor are given by
		\begin{equation*}
		R_{ijkt}+R_{ijtk}=0, \quad R_{ijkt}+R_{jikt}=0, \quad R_{ijkt}=R_{ktij},
		\end{equation*}
		while the first Bianchi identity is reads
		\begin{equation*}
		R_{ijkt}+R_{iktj}+R_{itjk}=0
		\end{equation*}
		and finally the second Bianchi identity is expressed as
		\begin{equation*}
		R_{ijkt,l}+R_{ijtl,k}+R_{ijlk,t}=0,
		\end{equation*}
		where, for an arbitrary tensor field $T$ of type $(r,s)$
		\begin{equation*}
		T=T_{j_1\ldots j_s}^{i_1\ldots i_r}\theta^{j_1}\otimes \ldots \otimes \theta^{j_s}\otimes e_{i_1}\otimes \ldots \otimes e_{i_r},
		\end{equation*}
		its covariant derivative is defined as the tensor field of type $(r,s+1)$
		\begin{equation*}
		\nabla T=T_{j_1\ldots j_s,k}^{i_1\ldots i_r}\theta^k\otimes \theta^{j_1}\otimes \ldots \otimes \theta^{j_s}\otimes e_{i_1}\otimes \ldots \otimes e_{i_r},
		\end{equation*}
		where the following relation holds
		\begin{equation*}
		T_{j_1\ldots j_s,k}^{i_1\ldots i_r}\theta^k=dT_{j_1\ldots j_s}^{i_1\ldots i_r}-\sum_{t=1}^sT_{j_1\ldots j_{t-1} h j_{t+1}\ldots  j_s}^{i_1\ldots i_r}\theta^{h}_{j_t}+\sum_{t=1}^rT_{j_1\ldots j_s}^{i_1\ldots i_{t-1} h i_{t+1} \ldots i_r}\theta^{i_t}_h.
		\end{equation*}
		It is possible to prove that the following commutation relation holds
		\begin{equation}\label{general commutation rule tensor field}
		T_{j_1\ldots j_s,kt}^{i_1\ldots i_r}=T_{j_1\ldots j_s,tk}^{i_1\ldots i_r}+\sum_{t=1}^sR^h_{j_tkt}T_{j_1\ldots j_{t-1} hj_{t+1}\ldots j_s}^{i_1\ldots i_r}-\sum_{t=1}^rR^{i_t}_{hkt}T_{j_1\ldots j_s}^{i_1\ldots i_{t-1} h i_{t+1}\ldots i_r}.
		\end{equation}
		
		The Ricci tensor $\mbox{Ric}_g=\mbox{Ric}$ of $(M,g)$ is given by the trace of the Riemann tensor. In a local $g$-orthonormal coframe $\{\theta^i\}$
		\begin{equation}\label{ricci tensor trace riem}
			\mbox{Ric}=R_{ij}\theta^i\otimes \theta^j, \quad R_{ij}=\eta^{tk}R_{tikj}.
		\end{equation}
		The scalar curvature $S$ of $(M,g)$, denoted also by $S_g$, is given by the trace of the Ricci tensor, locally
		\begin{equation*}
			S=\eta^{ij}R_{ij}.
		\end{equation*}
		The Riemannian volume element of $(M,g)$ is locally given by
		\begin{equation}\label{riem vol elem con moving frame}
			\mu=\theta^1\land \ldots \land \theta^m.
		\end{equation}
		
		Let $\{E_a\}$, $\{\omega^a\}$, $\{\omega^a_b\}$, $\{\Omega^a_b\}$ be a orthonormal frame, coframe, the respectively Levi-Civita connection forms and curvature forms on an open subset $\mathcal{V}$ on $N$ such that $\varphi^{-1}(\mathcal{V})\subseteq \mathcal{U}$. We set
		\begin{equation*}
		\varphi^*\omega^a=\varphi^a_i\theta^i
		\end{equation*}
		so that the differential $d\varphi$ of $\varphi$, a $1$-form on $M$ with values in the pullback bundle $\varphi^{-1}TN$, can be written as
		\begin{equation*}
		d\varphi=\varphi^a_i\theta^i\otimes E_a.
		\end{equation*}
		The energy density $e(\varphi)$, or $e^g(\varphi)$, of the map $\varphi$ is given by
		\begin{equation}\label{def dens en}
		e(\varphi)=\frac{1}{2}|d\varphi|^2,
		\end{equation}
		where $|d\varphi|^2=\mbox{tr}(\varphi^*\eta)$ is the square of the Hilbert-Schmidt norm of $d\varphi$. The generalized second fundamental tensor of the map $\varphi$ is given by $\nabla d\varphi$, locally
		\begin{equation*}
		\nabla d\varphi=\varphi^a_{ij}\theta^j\otimes \theta^i\otimes E_a,
		\end{equation*}
		where its coefficient are defined according to the rule
		\begin{equation*}
		\varphi^a_{ij}\theta^j=d\varphi^a_i-\varphi^a_k\theta^k_i+\varphi^b_i\omega^a_b.
		\end{equation*}
		The tension field $\tau(\varphi)$, or $\tau^g(\varphi)$, of the map $\varphi$ is the section of $\varphi^{-1}TN$ defined by
		\begin{equation}\label{def tension field}
		\tau(\varphi)=\mbox{tr}(\nabla d\varphi),
		\end{equation}
		locally
		\begin{equation*}
		\tau(\varphi)^a=\eta^{ij}\varphi^a_{ij}.
		\end{equation*}
		The bi-energy density $e_2(\varphi)$, or $e_2^g(\varphi)$, of the map $\varphi$ is defined as
		\begin{equation}\label{def dens bi-ener}
		e_2(\varphi)=\frac{1}{2}|\tau(\varphi)|^2,
		\end{equation}
		and the bi-tension field $\tau_2(\varphi)$, or $\tau^g_2(\varphi)$, of the map $\varphi$ is the section of $\varphi^{-1}TN$ locally given by
		\begin{equation}\label{def bi tension}
		\tau_2(\varphi)^a=\varphi^a_{iijj}-{}^NR^a_{bcd}\varphi^b_i\varphi^c_i\varphi^d_{jj}.
		\end{equation}
		
		Once again, it is possible to prove
		\begin{equation*}
			\varphi^a_{i_1\ldots i_r j k}=\varphi^a_{i_1\ldots i_r k j}+\sum_{p=1}^r R^t_{i_pjk}\varphi^a_{i_1\ \ldots i_{p-1} t i_{p+1} \ldots i_r}-{}^NR^a_{bcd}\varphi^b_{i_1\ldots i_r}\varphi^c_j\varphi^d_k.
		\end{equation*}
		In particular
		\begin{equation}\label{comm rule der terza phi}
		\varphi^a_{ijk}=\varphi^a_{ikj}+R^t_{ijk}\varphi^a_t-{}^NR^a_{bcd}\varphi^b_i\varphi^c_j\varphi^d_k
		\end{equation}
		and
		\begin{equation}\label{comm rule der quarta phi}
		\varphi^a_{ijkt}=\varphi^a_{ijtk}+R^s_{ikt}\varphi^a_{sj}+R^s_{jkt}\varphi^a_{is}-{}^NR^a_{bcd}\varphi^b_{ij}\varphi^c_k\varphi^d_t
		\end{equation}
		hold.
		
		We denote by $\Delta$ the Laplace-Beltrami operator acting on functions $u:M\to \mathbb{R}$, defined as $\Delta u=\mbox{tr}(\mbox{Hess}(u))$, where $\mbox{Hess}(u)=u_{ij}\theta^i\otimes \theta^j$, that is, $\Delta u=\eta^{ij}u_{ij}$. Furthermore, if $A$ is a symmetric two times covariant tensor on $(M,g)$ we set
		\begin{equation*}
			\mathring{A}:=A-\frac{\mbox{tr}(A)}{m}g
		\end{equation*}
		and the symmetric two times covariant tensors $A^2$ and $\Delta A$ are locally given by
		\begin{equation*}
		A^2_{ij}=\eta^{kt}A_{ik}A_{kt}, \quad \Delta A_{ij}=\eta^{kt}A_{ij,kt}.
		\end{equation*}

	\subsection{$\varphi$-Curvatures}\label{section phi curvature}
	Let $(M,g)$ be a semi-Riemannian manifold of dimension $m\geq 2$, $(N,\eta)$ a Riemannian manifold and $\alpha\in\mathbb{R}\setminus\{0\}$. In this section we recall the definition of $\varphi$-curvatures and we state their properties that shall be useful later on. Their proofs in the Riemannian setting are contained in Section 1.2 of \cite{A}. Those proofs can be easily extended to the semi-Riemannian setting.
	
	The $\varphi$-Ricci tensor is defined as
	\begin{equation}\label{def phi Ricci}
		\mbox{Ric}^{\varphi}_g\equiv\mbox{Ric}^{\varphi}:=\mbox{Ric}-\alpha \varphi^*\eta
	\end{equation}
	and the $\varphi$-scalar curvature is given by its trace:
	\begin{equation}\label{def of phi scalar curv}
		S^{\varphi}_g\equiv S^{\varphi}:=\mbox{tr}(\mbox{Ric}^{\varphi}).
	\end{equation}
	Starting from them the we define $\varphi$-Schouten tensor
	\begin{equation}\label{def of phi schouten}
		A^{\varphi}_g\equiv A^{\varphi}:=\mbox{Ric}^{\varphi}-\frac{S^{\varphi}}{2(m-1)}g.
	\end{equation}
	The $\varphi$-Cotton tensor measures the failure of the commutation of the covariant derivatives of the $\varphi$-Schouten tensor and in global notation is given by
	\begin{equation}\label{def of phi Cotton}
		C^{\varphi}_g(X,Y,Z)\equiv C^{\varphi}(X,Y,Z):=\nabla_ZA^{\varphi}(X,Y)-\nabla_YA^{\varphi}(X,Z) \quad \mbox{ for every } X,Y,Z\in\mathfrak{X}(M).
	\end{equation}
	In moving frame notation
	\begin{equation*}
		C^{\varphi}_{ijk}=A^{\varphi}_{ij,k}-A^{\varphi}_{ik,j}
	\end{equation*}
	Clearly the $\varphi$-Cotton tensor is skew-symmetric in the last two entries and satisfies the following Bianchi identity
	\begin{equation}\label{bianchi id phi cotton}
		C^{\varphi}(X,Y,Z)+C^{\varphi}(Y,Z,X)+C^{\varphi}(Z,X,Y)=0 \quad \mbox{ for every } X,Y,Z\in\mathfrak{X}(M).
	\end{equation}
	When $m\geq 3$ we define the $\varphi$-Weyl tensor as
	\begin{equation}\label{def of phi Weyl}
		W_g^{\varphi}=W^{\varphi}:=\mbox{Riem}-\frac{1}{m-2}A^{\varphi}\circleland g,
	\end{equation}
	where $\circleland$ denotes the Kulkarni-Nomizu product of two times covariant symmetric tensors and give rise to a $(0,4)$ tensor that has the same symmetries of the Riemann tensor. Recall that, for $V$ and $T$ two times covariant symmetric tensors,
	\begin{equation*}
		(T\circleland V)(X,Y,Z,W)=T(X,Z)V(Y,W)-T(X,W)V(Y,Z)+T(Y,W)V(X,Z)-T(Y,Z)V(X,W),
	\end{equation*}
	in moving frame notation
	\begin{equation*}
		(T\circleland V)_{ijkt}=T_{ik}V_{jt}-T_{it}V_{jk}+T_{jt}V_{ik}-T_{jk}V_{it}.
	\end{equation*}
	Following P. Baird and J. Eells, see \cite{BaE}, we define the stress-energy tensor of $\varphi$ (with a different sign convention) by
	\begin{equation}\label{stress energy tensor}
		T^g\equiv T:=\varphi^*\eta-\frac{|d\varphi|^2}{2}g.
	\end{equation}
	The generalized Schur's identity is given by
	\begin{equation}\label{div of phi Ricci}
		\mbox{div}(\mbox{Ric}^{\varphi})=\frac{1}{2}dS^{\varphi}-\alpha \mbox{div}(T).
	\end{equation}
	The divergence of $\varphi$-Weyl is related to the $\varphi$-Cotton tensor as follows, in a local $g$-orthonormal coframe,
	\begin{equation}\label{div di phi Weyl}
		\eta^{ts}W^{\varphi}_{tijk,s}=\frac{m-3}{m-2}C^{\varphi}_{ikj}+\alpha (\varphi^a_{ij}\varphi^a_k-\varphi^a_{ik}\varphi^a_j)+\frac{\alpha}{m-2}\tau(\varphi)^a(\varphi^a_j\eta_{ik}-\varphi^a_k\eta_{ij}),
	\end{equation}
	The traces of $\varphi$-Cotton (with respect to the first and the second entries) and of $\varphi$-Weyl (with respect to the first and the third entries) are given by, respectively, 
	\begin{equation}\label{traccia phi cotton}
		\mbox{tr}(C^{\varphi})=\alpha \mbox{div}(T), \quad \eta^{jk}C^{\varphi}_{jki}=\alpha \mbox{div}(T)_i
	\end{equation}
	and
	\begin{equation}\label{traccia phi weyl}
		\mbox{tr}(W^{\varphi})=\alpha \varphi^*\eta, \quad \eta^{kt}W^{\varphi}_{kitj}=\alpha\varphi^a_i\varphi^a_j.
	\end{equation}
	
	\subsection{The $\varphi$-Bach tensor}\label{section phi bach}
	
	It remain another $\varphi$-curvature to introduce: the $\varphi$-Bach tensor $B^{\varphi}_g\equiv B^{\varphi}$, whose components are given by, in a local $g$-orthonormal coframe,
	\begin{equation}\label{def phi bach}
	\begin{aligned}
	(m-2)B^{\varphi}_{ij}=&\eta^{kt}C^{\varphi}_{ijk,t}+(R^{\varphi})^{tk}(W^{\varphi}_{tikj}-\alpha \varphi^a_t\varphi^a_i\eta_{jk})\\
	&+\alpha\left(\varphi^a_{ij}\tau(\varphi)^a-\tau(\varphi)^a_j\varphi^a_i-\frac{1}{m-2}|\tau(\varphi)|^2\eta_{ij}\right).
	\end{aligned}
	\end{equation}
	It is not immediate to see but the $\varphi$-Bach tensor is symmetric and this is due to the validity of
	\begin{equation}\label{div of phi cotton}
	\eta^{kt}C^{\varphi}_{kij,t}=\alpha [\varphi^a_k((R^{\varphi})^k_{i}\varphi^a_j-(R^{\varphi})^k_{j}\varphi^a_i)+\tau(\varphi)^a_i\varphi^a_j-\tau(\varphi)^a_j\varphi^a_i].
	\end{equation}
	Furthermore, its trace is given by
	\begin{equation}\label{traccia phi bach}
	(m-2)\mbox{tr}(B^{\varphi})=\alpha\frac{m-4}{m-2}|\tau(\varphi)|^2.
	\end{equation}
	
	In the following Proposition we provide an alternative, but equivalent, definition for $\varphi$-Bach that shall be useful in the proof of \hyperref[lemma variazione bach operator rispetto metrica]{Lemma \ref*{lemma variazione bach operator rispetto metrica}}.
	\begin{prop}
		In a local $g$-orthonormal coframe the components of the $\varphi$-Bach tensor can be written as
		\begin{equation}\label{scritt alt phi bach}
		\begin{aligned}
		(m-2)B^{\varphi}_{ij}=&\Delta R^{\varphi}_{ij}-\frac{m-2}{2(m-1)}S^{\varphi}_{ij}-\frac{m-4}{m-2}(R^{\varphi})^2_{ij}-\frac{m}{(m-1)(m-2)}S^{\varphi}R^{\varphi}_{ij}+2R_{kitj}(R^{\varphi})^{kt}\\
		&+\left(\frac{(S^{\varphi})^2}{(m-1)(m-2)}-\frac{\Delta S^{\varphi}}{2(m-1)}-\frac{1}{m-2}|\mbox{Ric}^{\varphi}|^2\right)\eta_{ij}\\
		&+\alpha\left[2\varphi^a_{ij}\tau(\varphi)^a-\frac{1}{m-2}|\tau(\varphi)|^2\eta_{ij}-((R^{\varphi})^k_i\varphi^a_j+(R^{\varphi})^k_j\varphi^a_i)\varphi^a_k\right].
		\end{aligned}
		\end{equation}
		In particular, if $m=4$,
		\begin{equation}\label{scritt alt phi bach m=4}
		\begin{aligned}
		B^{\varphi}_{ij}=&\frac{1}{2}\Delta R^{\varphi}_{ij}-\frac{1}{6}S^{\varphi}_{ij}-\frac{1}{3}S^{\varphi}R^{\varphi}_{ij}+R_{kitj}(R^{\varphi})^{kt}+\left(\frac{(S^{\varphi})^2}{12}-\frac{\Delta S^{\varphi}}{12}-\frac{1}{4}|\mbox{Ric}^{\varphi}|^2\right)\eta_{ij}\\
		&+\alpha\left[\varphi^a_{ij}\tau(\varphi)^a-\frac{1}{4}|\tau(\varphi)|^2\eta_{ij}-\frac{1}{2}((R^{\varphi})^k_i\varphi^a_j+(R^{\varphi})^k_j\varphi^a_i)\varphi^a_k\right].
		\end{aligned}
		\end{equation}
	\end{prop}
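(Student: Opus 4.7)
The plan is to start from the definition \eqref{def phi bach} of $B^{\varphi}$ and rewrite the first two summands on the right-hand side purely in terms of Ricci, scalar, Riemann and $\varphi$-quantities; the last line of \eqref{def phi bach} is already in the target form (up to splitting $\tau(\varphi)^a_j\varphi^a_i$ symmetrically into the two $(R^{\varphi})^k_i\varphi^a_k\varphi^a_j$-type contributions via the second Bianchi-type identity \eqref{div of phi cotton}).

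First I would expand the $\varphi$-Cotton term. Since $C^{\varphi}_{ijk}=A^{\varphi}_{ij,k}-A^{\varphi}_{ik,j}$, differentiating once more and tracing gives
\begin{equation*}
\eta^{kt}C^{\varphi}_{ijk,t}=\Delta A^{\varphi}_{ij}-\eta^{kt}A^{\varphi}_{ik,jt},
\end{equation*}
and using \eqref{def of phi schouten} this becomes
\begin{equation*}
\Delta R^{\varphi}_{ij}-\tfrac{1}{2(m-1)}\Delta S^{\varphi}\,\eta_{ij}-\eta^{kt}R^{\varphi}_{ik,jt}+\tfrac{1}{2(m-1)}S^{\varphi}_{,ij}.
\end{equation*}
To handle $\eta^{kt}R^{\varphi}_{ik,jt}$ I would commute the last two covariant derivatives via \eqref{general commutation rule tensor field}, producing $\eta^{kt}R^{\varphi}_{ik,tj}$ plus two Riemann-times-Ricci curvature contractions; the first piece is the $j$-derivative of $\mathrm{div}(\mathrm{Ric}^{\varphi})_i$, which by Schur's identity \eqref{div of phi Ricci} equals $\tfrac{1}{2}S^{\varphi}_{,i}-\alpha\,\mathrm{div}(T)_i$, and a short direct computation using the symmetry of $\varphi^a_{ij}$ shows $\mathrm{div}(T)_i=\varphi^a_i\tau(\varphi)^a$. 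Differentiating once more in $j$ therefore yields a $\tfrac{1}{2}S^{\varphi}_{,ij}$ that cancels nicely against the Schouten correction, together with a term $-\alpha(\varphi^a_{ij}\tau(\varphi)^a+\varphi^a_i\tau(\varphi)^a_j)$ whose symmetric part will combine with the $\alpha$-line of \eqref{def phi bach}.

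Next I would deal with the term $(R^{\varphi})^{tk}(W^{\varphi}_{tikj}-\alpha\varphi^a_t\varphi^a_i\eta_{jk})$ by expanding the $\varphi$-Weyl tensor through \eqref{def of phi Weyl},
\begin{equation*}
W^{\varphi}_{tikj}=R_{tikj}-\tfrac{1}{m-2}(A^{\varphi}\circleland g)_{tikj},
\end{equation*}
and then computing the contraction $(R^{\varphi})^{tk}(A^{\varphi}\circleland g)_{tikj}$ using the explicit Kulkarni--Nomizu formula in the statement. This contraction unpacks into terms proportional to $(R^{\varphi})^2_{ij}$, $S^{\varphi}R^{\varphi}_{ij}$, $|\mathrm{Ric}^{\varphi}|^2\eta_{ij}$ and $(S^{\varphi})^2\eta_{ij}$, in each case with coefficients depending rationally on $m$; I would collect these carefully to produce the combination on the right-hand side of \eqref{scritt alt phi bach}. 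The term $\alpha(R^{\varphi})^{tk}\varphi^a_t\varphi^a_i\eta_{jk}=\alpha(R^{\varphi})^k_j\varphi^a_k\varphi^a_i$ is responsible (together with its symmetrization coming from \eqref{div of phi cotton}) for the final cross-term $((R^{\varphi})^k_i\varphi^a_j+(R^{\varphi})^k_j\varphi^a_i)\varphi^a_k$.

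Putting the three pieces together and multiplying through by $(m-2)$ should reproduce \eqref{scritt alt phi bach}. The specialization \eqref{scritt alt phi bach m=4} follows immediately by setting $m=4$, which kills the $(R^{\varphi})^2_{ij}$ term. The main obstacle will be purely computational: keeping track of the Riemann-curvature terms produced by the commutation of second covariant derivatives and verifying that every numerical coefficient (in particular the nontrivial factors $\tfrac{m-4}{m-2}$, $\tfrac{m}{(m-1)(m-2)}$, and $\tfrac{1}{(m-1)(m-2)}$) comes out as claimed after using the generalized Schur identity and the explicit form of $\mathrm{div}(T)$. A useful sanity check along the way is that setting $\varphi$ constant must recover the classical identity for the Bach tensor, and that the trace of \eqref{scritt alt phi bach} agrees with \eqref{traccia phi bach}.
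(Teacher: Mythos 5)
Your proposal follows essentially the same route as the paper: trace the once-differentiated Cotton tensor written via the Schouten tensor, commute the second covariant derivatives on $\eta^{kt}R^{\varphi}_{ik,jt}$, apply the generalized Schur identity \eqref{div of phi Ricci} together with $\mathrm{div}(T)_i=\tau(\varphi)^a\varphi^a_i$, expand $W^{\varphi}_{tikj}(R^{\varphi})^{tk}$ through the Kulkarni--Nomizu decomposition, and collect coefficients. The only small inaccuracy is your appeal to \eqref{div of phi cotton} to symmetrize the cross-term: that identity is not needed here, since the $\tau(\varphi)^a_j\varphi^a_i$ contributions cancel outright and the symmetric combination $((R^{\varphi})^k_i\varphi^a_j+(R^{\varphi})^k_j\varphi^a_i)\varphi^a_k$ emerges directly from adding the $(R^{\varphi})^s_i\varphi^a_j\varphi^a_s$ term produced by the commutation to the explicit $(R^{\varphi})^{tk}\varphi^a_t\varphi^a_i\eta_{jk}$ term already present in \eqref{def phi bach}.
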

	\begin{proof}
		Using the definitions \eqref{def of phi Cotton} and \eqref{def of phi schouten} we get
		\begin{equation*}
		C^{\varphi}_{ijk,t}=R^{\varphi}_{ij,kt}-R^{\varphi}_{ik,jt}-\frac{S^{\varphi}_{kt}}{2(m-1)}\eta_{ij}+\frac{S^{\varphi}_{jt}}{2(m-1)}\eta_{ik}.
		\end{equation*}
		Then
		\begin{equation}\label{primo pezzo div cotton per riscrivere phi bach}
		\eta^{kt}C^{\varphi}_{ijk,t}=\Delta R^{\varphi}_{ij}-\eta^{kt}R^{\varphi}_{ik,jt}-\frac{\Delta S^{\varphi}}{2(m-1)}\eta_{ij}+\frac{S^{\varphi}_{ij}}{2(m-1)}.
		\end{equation}
		
		The following relation holds
		\begin{equation}\label{pezzo con div ricci per div cotton per riscrivere phi bach}
		\eta^{kt}R^{\varphi}_{ik,jt}=\frac{1}{2}S^{\varphi}_{ij}-R_{ikjt}(R^{\varphi})^{kt}+(R^{\varphi})^2_{ij}+\alpha\left( (R^{\varphi})^s_i\varphi^a_j\varphi^a_s-\tau(\varphi)^a_j\varphi^a_i-\tau(\varphi)^a\varphi^a_{ij}\right).
		\end{equation}
		To prove \eqref{pezzo con div ricci per div cotton per riscrivere phi bach} first notice that, commutating the indexes,
		\begin{equation*}
		R^{\varphi}_{ik,jt}=R^{\varphi}_{ik,tj}+R^s_{ijt}R^{\varphi}_{sk}+R^s_{kjt}R^{\varphi}_{is},
		\end{equation*}
		hence contracting the above
		\begin{equation*}
		\eta^{kt}R^{\varphi}_{ik,jt}=\mbox{div}(\mbox{Ric}^{\varphi})_{i,j}+R^s_{ijt}(R^{\varphi})^t_s+R^s_jR^{\varphi}_{is}.
		\end{equation*}
		Using \eqref{div of phi Ricci} and \eqref{def phi Ricci} from the above we infer
		\begin{equation*}
		\eta^{kt}R^{\varphi}_{ik,jt}=\left(\frac{1}{2}S^{\varphi}_i-\alpha\tau(\varphi)^a\varphi^a_i\right)_j+R^s_{ijt}(R^{\varphi})^t_s+(R^{\varphi})^2_{ij}+\alpha (R^{\varphi})^s_i\varphi^a_j\varphi^a_s,
		\end{equation*}
		that is \eqref{pezzo con div ricci per div cotton per riscrivere phi bach}. By plugging \eqref{pezzo con div ricci per div cotton per riscrivere phi bach} into \eqref{primo pezzo div cotton per riscrivere phi bach} we get
		\begin{equation}\label{primo pezzo def div cotton per riscrivere phi bach}
		\begin{aligned}
		\eta^{kt}C^{\varphi}_{ijk,t}=&\Delta R^{\varphi}_{ij}-\frac{m-2}{2(m-1)}S^{\varphi}_{ij}+R_{kitj}(R^{\varphi})^{kt}-(R^{\varphi})^2_{ij}-\frac{\Delta S^{\varphi}}{2(m-1)}\eta_{ij}\\
		&+\alpha\left( \tau(\varphi)^a_j\varphi^a_i+\tau(\varphi)^a\varphi^a_{ij}-(R^{\varphi})^s_i\varphi^a_j\varphi^a_s\right).
		\end{aligned}
		\end{equation}
		Now, using \eqref{def of phi Weyl} and \eqref{def of phi schouten},
		\begin{equation}\label{primo pezzo def weyl per riscrivere phi bach}
		W^{\varphi}_{ikjt}(R^{\varphi})^{kt}=R_{ikjt}(R^{\varphi})^{kt}-\frac{1}{m-2}\left[\frac{m}{m-1}S^{\varphi}R^{\varphi}_{ij}-2(R^{\varphi})^2_{ij}+\left(|\mbox{Ric}^{\varphi}|^2-\frac{(S^{\varphi})^2}{m-1}\right)\eta_{ij}\right].
		\end{equation}
		By plugging \eqref{primo pezzo def div cotton per riscrivere phi bach} and \eqref{primo pezzo def weyl per riscrivere phi bach} into \eqref{def phi bach}, after some computations, we get \eqref{scritt alt phi bach}.
	\end{proof}
	
	Motivated by the fact that the Bach tensor is divergence free for four dimensional manifolds in the following Proposition we evaluate the divergence of $\varphi$-Bach. The following Proposition will be useful also in the proof of \hyperref[lemma variazione bach operator rispetto mappa]{Lemma \ref*{lemma variazione bach operator rispetto mappa}}.
	\begin{prop}
		In a local $g$-orthonormal coframe the components of the divergence of the $\varphi$-Bach tensor are given by
		\begin{equation}\label{div di phi Bach}
		\begin{aligned}
		\mbox{div}(B^{\varphi})_i&=\frac{m-4}{m-2}(R^{\varphi}_{jk}C^{\varphi}_{jki}+\alpha(\tau(\varphi)^a_i+R^{\varphi}_{ij}\varphi^a_j)\tau(\varphi)^a)\\
		&+\alpha \varphi^a_i\left[\frac{mS^{\varphi}}{(m-1)(m-2)}\tau(\varphi)^a-\frac{m-2}{2(m-1)}S^{\varphi}_j\varphi^a_j-2R^{\varphi}_{jk}\varphi^a_{jk}+2\tau(\varphi)^b\varphi^b_j\varphi^a_j-\tau_2(\varphi)^a\right].
		\end{aligned}
		\end{equation}
		By setting
		\begin{equation*}
		(J_m)^a=J^a:=\frac{mS^{\varphi}}{(m-1)(m-2)}\tau(\varphi)^a-\frac{m-2}{2(m-1)}S^{\varphi}_j\varphi^a_j-2R^{\varphi}_{jk}\varphi^a_{jk}+2\tau(\varphi)^b\varphi^b_j\varphi^a_j-\tau_2(\varphi)^a
		\end{equation*}
		the above can be written as
		\begin{equation*}
		\mbox{div}(B^{\varphi})_i=\frac{m-4}{m-2}(R^{\varphi}_{jk}C^{\varphi}_{jki}+\alpha(\tau(\varphi)^a_i+R^{\varphi}_{ij}\varphi^a_j)\tau(\varphi)^a)+\alpha J_m\varphi^a_i.
		\end{equation*}
	\end{prop}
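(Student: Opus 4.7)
The plan is to differentiate the defining formula \eqref{def phi bach} and systematically reduce the resulting terms using the Bianchi identities, the commutation rules, and the auxiliary identities already at our disposal. More precisely, we contract $\eta^{jl}\nabla_{e_l}$ against both sides of
\[
(m-2)B^{\varphi}_{ij}=\eta^{kt}C^{\varphi}_{ijk,t}+(R^{\varphi})^{tk}(W^{\varphi}_{tikj}-\alpha\varphi^a_t\varphi^a_i\eta_{jk})+\alpha\!\left(\varphi^a_{ij}\tau(\varphi)^a-\tau(\varphi)^a_j\varphi^a_i-\tfrac{1}{m-2}|\tau(\varphi)|^2\eta_{ij}\right),
\]
and treat the three resulting groups of terms separately.

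First I would handle $\eta^{kt}\eta^{jl}C^{\varphi}_{ijk,tl}$. The idea is to exchange the outermost derivatives via the commutation rule \eqref{general commutation rule tensor field}, then apply the Bianchi identity \eqref{bianchi id phi cotton} to rewrite $C^{\varphi}_{ijk,t}$ in terms of $C^{\varphi}_{jki,t}+C^{\varphi}_{kij,t}$, and finally use the second-trace identity \eqref{div of phi cotton} to eliminate $\eta^{kt}C^{\varphi}_{kij,t}$. The curvature commutators produce $R^{\varphi}_{jk}C^{\varphi}_{jki}$-type contributions (the Weitzenböck-like piece visible in the final formula) together with error terms that must be tracked carefully. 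Second, for the term $(R^{\varphi})^{tk}W^{\varphi}_{tikj,l}\eta^{jl}$, I would apply the divergence formula for $\varphi$-Weyl \eqref{div di phi Weyl}, which by design trades derivatives of Weyl for Cotton and curvature-$\varphi$ products; the extra terms $\nabla R^{\varphi}\cdot W^{\varphi}$ coming from the Leibniz rule can then be expanded and recombined with the Cotton contributions from Step~1 to produce the coefficient $\frac{m-3}{m-2}$, which after combining with $\tfrac{1}{m-2}$ from $(m-2)B^{\varphi}$ yields the characteristic factor $\frac{m-4}{m-2}$.

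Third, the $\varphi$-pieces require using the third-order commutation rule \eqref{comm rule der terza phi} (for the terms $\tau(\varphi)^a_j\varphi^a_i$ differentiated once) and the fourth-order commutation rule \eqref{comm rule der quarta phi} together with the definition \eqref{def bi tension} of the bi-tension $\tau_2(\varphi)$ to identify the combination $-\tau_2(\varphi)^a\varphi^a_i$. The term $\alpha(R^{\varphi})^{tk}\varphi^a_t\varphi^a_i\eta_{jk}$, after taking one derivative and contracting, produces both the $R^{\varphi}_{jk}\varphi^a_{jk}\varphi^a_i$ contribution and, via the generalized Schur identity \eqref{div of phi Ricci}, the $S^{\varphi}_j\varphi^a_j\varphi^a_i$ and $\frac{mS^{\varphi}}{(m-1)(m-2)}\tau(\varphi)^a\varphi^a_i$ contributions. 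At the end, I would collect all terms: those free of $\varphi^a_i$ assemble into $\frac{m-4}{m-2}\bigl(R^{\varphi}_{jk}C^{\varphi}_{jki}+\alpha(\tau(\varphi)^a_i+R^{\varphi}_{ij}\varphi^a_j)\tau(\varphi)^a\bigr)$, while the remaining ones factor as $\alpha\varphi^a_i\cdot J^a$ with $J^a$ precisely the tensor appearing in the statement.

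The main obstacle is bookkeeping: several intermediate expressions have many terms of comparable shape (products of $R^{\varphi}$ with $\varphi^a_{jk}$, contractions of Weyl with Ricci, etc.) and the neat coefficients $\frac{m-4}{m-2}$ and $\frac{m}{(m-1)(m-2)}$ emerge only after a careful combination of \eqref{div di phi Weyl}, \eqref{div of phi Ricci}, \eqref{traccia phi weyl} and the commutator rules. A useful simplification is to first derive the statement with $\mbox{Ric}^{\varphi}$ rewritten as $\mbox{Ric}-\alpha\varphi^*\eta$ only at the very end: this keeps the structure parallel to the classical derivation of $\mbox{div}(B)$ and isolates the purely $\varphi$-dependent contributions, which by construction must reassemble into $\alpha J^a\varphi^a_i$ because $B^{\varphi}$ reduces to the Bach tensor (divergence-free when $m=4$) upon taking $\varphi$ constant.
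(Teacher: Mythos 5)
Your proposal is correct and follows essentially the same route as the paper's proof: take the divergence of the defining formula \eqref{def phi bach} term by term, reduce the second derivative of $C^{\varphi}$ via commutation and \eqref{div of phi cotton}, handle the Weyl--Ricci contraction via \eqref{div di phi Weyl}, and process the map terms with \eqref{comm rule der terza phi}, \eqref{comm rule der quarta phi}, \eqref{div of phi Ricci} and the definition \eqref{def bi tension} of the bi-tension. The only difference is organizational: the paper splits $(m-2)B^{\varphi}=\alpha L+M+N$ with $N_{ij}:=C^{\varphi}_{ijk,k}-\alpha(R^{\varphi}_{kj}\varphi^a_k+\varphi^a_{kkj})\varphi^a_i$ grouping the Cotton divergence with part of the $\varphi$-terms, whereas you keep the three groups exactly as they appear in \eqref{def phi bach}; both lead to the same bookkeeping and the same cancellations (e.g.\ of the $W^{\varphi}\cdot C^{\varphi}$ contractions).
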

	\begin{rmk}\label{rmk phi bach privo di div per m uguale a 4}
		In particular, for $m=4$,
		\begin{align*}
		\mbox{div}(B^{\varphi})_i=&\alpha J^a\varphi^a_i,
		\end{align*}
		with
		\begin{equation}\label{def of J 4}
		J^a=(J_4)^a=\frac{2}{3}S^{\varphi}\tau(\varphi)^a-\frac{1}{3}S^{\varphi}_j\varphi^a_j-2R^{\varphi}_{jk}\varphi^a_{jk}+2\tau(\varphi)^b\varphi^b_j\varphi^a_j-\tau_2(\varphi)^a.
		\end{equation}
		Then, if $\varphi$-Bach is divergence free and $\varphi$ is a submersion a.e.,
		\begin{equation*}
		J=0.
		\end{equation*}
	\end{rmk}
	\begin{proof}
		We decompose the $\varphi$-Bach tensor as follows
		\begin{equation*}
		(m-2)B^{\varphi}_{ij}=\alpha L_{ij}+M_{ij}+N_{ij},
		\end{equation*}
		where
		\begin{equation*}
		L_{ij}:=\varphi^a_{kk}\varphi^a_{ij}-\frac{1}{m-2}|\tau(\varphi)|^2\delta_{ij},
		\end{equation*}
		\begin{equation*}
		M_{ij}:=R^{\varphi}_{tk}W^{\varphi}_{tikj}
		\end{equation*}
		and
		\begin{equation*}
		N_{ij}:=C^{\varphi}_{ijk,k}-\alpha (R^{\varphi}_{kj}\varphi^a_k+\varphi^a_{kkj})\varphi^a_i.
		\end{equation*}
		We proceed by evaluating separately the divergences of $L$, $M$ and $N$ and then we combine them all to obtain \eqref{div di phi Bach}.
		
		\begin{itemize}
			\item Using the commutation rule \eqref{comm rule der terza phi} and the definition of $\varphi$-Ricci \eqref{def phi Ricci} we easily get
			\begin{equation}\label{div L}
			\mbox{div}(L)_j=\frac{m-4}{m-2}\varphi^a_{kkj}\varphi^a_{tt}+\varphi^a_{kki}\varphi^a_{ij}+{}^NR^a_{bcd}\varphi^b_i\varphi^c_i\varphi^d_j\varphi^a_{kk}+R^{\varphi}_{tj}\varphi^a_t\varphi^a_{kk}+\alpha \varphi^b_t\varphi^b_j\varphi^a_t\varphi^a_{kk}.
			\end{equation}
			\item Clearly
			\begin{equation*}
			\mbox{div}(M)_j=R^{\varphi}_{tk,i}W^{\varphi}_{tikj}+R^{\varphi}_{tk}W^{\varphi}_{tikj,i}.
			\end{equation*}
			Using the definition of $\varphi$-Schouten and $\varphi$-Cotton and the symmetries of $\varphi$-Weyl and $\varphi$-Schouten we infer
			\begin{equation*}
			R^{\varphi}_{tk,i}W^{\varphi}_{tikj}=C^{\varphi}_{tki}W^{\varphi}_{tikj}+\alpha\frac{S^{\varphi}_i}{2(m-1)}\varphi^a_i\varphi^a_j.
			\end{equation*}
			Using \eqref{div di phi Weyl} we easily get
			\begin{equation*}
			R^{\varphi}_{tk}W^{\varphi}_{tikj,i}=\frac{m-3}{m-2}R^{\varphi}_{ik}C^{\varphi}_{ikj}+\alpha(R^{\varphi}_{ik}\varphi^a_{ij}\varphi^a_k-R^{\varphi}_{ik}\varphi^a_{ik}\varphi^a_j)+\frac{\alpha}{m-2}(S^{\varphi}\varphi^a_j-R^{\varphi}_{kj}\varphi^a_k)\varphi^a_{tt}.
			\end{equation*}
			Combining the equations above we have
			\begin{equation}\label{div di M}
			\begin{aligned}
			\mbox{div}(M)_j=&C^{\varphi}_{tki}W^{\varphi}_{tikj}+\alpha\frac{S^{\varphi}_i}{2(m-1)}\varphi^a_i\varphi^a_j+\frac{m-3}{m-2}R^{\varphi}_{ik}C^{\varphi}_{ikj}\\
			&+\alpha(R^{\varphi}_{ik}\varphi^a_{ij}\varphi^a_k-R^{\varphi}_{ik}\varphi^a_{ik}\varphi^a_j)+\frac{\alpha}{m-2}(S^{\varphi}\varphi^a_j-R^{\varphi}_{kj}\varphi^a_k)\varphi^a_{tt}.
			\end{aligned}
			\end{equation}
			\item Finally
			\begin{equation}\label{div di N primo conto}
			\mbox{div}(N)_j=C^{\varphi}_{ijk,ki}-\alpha (R^{\varphi}_{kj,i}\varphi^a_k+R^{\varphi}_{kj}\varphi^a_{ki}+\varphi^a_{kkji})\varphi^a_i-\alpha (R^{\varphi}_{kj}\varphi^a_k+\varphi^a_{kkj})\varphi^a_{ii}.
			\end{equation}
			Exchanging the covariant derivatives we obtain
			\begin{equation}\label{per calc div di N termine con div di cotton completo}
			C^{\varphi}_{ijk,ki}=(C^{\varphi}_{kji,k})_i-R^t_{jik}C^{\varphi}_{itk}.
			\end{equation}
			Using \eqref{div of phi cotton} we infer
			\begin{equation}\label{per calc div di N termine con div di cotton}
			\begin{aligned}
			(C^{\varphi}_{kji,k})_i=&\alpha[(R^{\varphi}_{jk,i}\varphi^a_i+R^{\varphi}_{jk}\varphi^a_{ii}-R^{\varphi}_{ik,i}\varphi^a_j-R^{\varphi}_{ik}\varphi^a_{ji})\varphi^a_k+(R^{\varphi}_{jk}\varphi^a_i-R^{\varphi}_{ik}\varphi^a_j)\varphi^a_{ki}]\\
			&+\alpha[\varphi^a_{kkji}\varphi^a_i+\varphi^a_{kkj}\varphi^a_{ii}-\varphi^a_{kkii}\varphi^a_j-\varphi^a_{kki}\varphi^a_{ji}].
			\end{aligned}
			\end{equation}
			Plugging \eqref{per calc div di N termine con div di cotton} into \eqref{per calc div di N termine con div di cotton completo} and then plugging it all into \eqref{div di N primo conto}, with the aid of \eqref{div of phi Ricci}, we get
			\begin{equation*}
			\mbox{div}(N)_j=R^t_{jki}C^{\varphi}_{itk}-\alpha\left[\frac{1}{2}S^{\varphi}_k\varphi^a_k\varphi^a_j-\alpha \varphi^b_{ii}\varphi^b_k\varphi^a_j\varphi^a_k+R^{\varphi}_{ik}\varphi^a_{ij}\varphi^a_k+R^{\varphi}_{ik}\varphi^a_{ik}\varphi^a_j+\varphi^a_{kkii}+\varphi^a_{kki}\varphi^a_{ij}\right].
			\end{equation*}
			Using the decomposition \eqref{def of phi Weyl}, the definition of $\varphi$-Schouten, the symmetries of $\varphi$-Cotton and \eqref{traccia phi cotton} we get
			\begin{equation*}
			R^t_{jki}C^{\varphi}_{itk}=W^{\varphi}_{tjki}C^{\varphi}_{itk}-\frac{1}{m-2}\left(R^{\varphi}_{it}C^{\varphi}_{itj}+\alpha R^{\varphi}_{jk}\varphi^a_k\varphi^a_{ii}-\frac{\alpha S^{\varphi}}{m-1}\varphi^a_{ii}\varphi^a_j\right),
			\end{equation*}
			hence, by plugging into the above we finally get
			\begin{equation}\label{div di N}
			\begin{aligned}
			\mbox{div}(N)_j=&W^{\varphi}_{tjki}C^{\varphi}_{itk}-\frac{1}{m-2}R^{\varphi}_{ik}C^{\varphi}_{ikj}+\alpha\left[\frac{S^{\varphi}}{(m-1)(m-2)}\varphi^a_{ii}\varphi^a_j-\frac{1}{m-2}R^{\varphi}_{kj}\varphi^a_k\varphi^a_{ii}\right]\\
			&+\alpha\left[\alpha \varphi^b_{ii}\varphi^b_k\varphi^a_j\varphi^a_k-\frac{1}{2}S^{\varphi}_k\varphi^a_k\varphi^a_j-R^{\varphi}_{ik}\varphi^a_{ij}\varphi^a_k-R^{\varphi}_{ik}\varphi^a_{ik}\varphi^a_j-\varphi^a_{kkii}-\varphi^a_{kki}\varphi^a_{ij}\right].
			\end{aligned}
			\end{equation}
		\end{itemize}
		Combining \eqref{div L}, \eqref{div di M} and \eqref{div di N}, after a computation and rearranging the terms, recalling the definition \eqref{def bi tension} of the bi-tension of $\varphi$, we conclude the validity of \eqref{div di phi Bach}.
	\end{proof}

	\subsection{Einstein fields equations in presence of a field}\label{section sol einst field eq con campo}
	
	\begin{dfn}\label{def harm einst mani}
		Let $(M,g)$ be a semi-Riemannian manifold of dimension $m\geq 3$, $\varphi:M\to N$ a smooth map with target a Riemannian manifold $(N,\eta)$ and $\alpha\in\mathbb{R}\setminus\{0\}$. We say that $(M,g)$ is {\em harmonic-Einstein (with respect to $\varphi$ and $\alpha$)} if
		\begin{equation}\label{harm einst}
			\begin{dcases}
			\mathring{\mbox{Ric}}^{\varphi}=0\\
			\tau(\varphi)=0.
			\end{dcases}
		\end{equation}
		We say that $(M,g)$ is {\em $\varphi$-Ricci flat (with respect to $\alpha$)} if
		\begin{equation}\label{phi ricci flat}
		\begin{dcases}
		\mbox{Ric}^{\varphi}=0\\
		\tau(\varphi)=0.
		\end{dcases}
		\end{equation}
	\end{dfn}
	\begin{rmk}\label{rmk phi curv of harm einst}
		Harmonic-Einstein manifolds have constant $\varphi$-scalar curvature, it follows from \eqref{div of phi Ricci}. Then the $\varphi$-Schouten tensor is parallel and, as a consequence, the $\varphi$-Cotton tensor vanishes. Furthermore, harmonic-Einstein manifolds are $\varphi$-Bach flat.
	\end{rmk}
	\begin{rmk}
		$\varphi$-Ricci flat manifolds (with respect to $\alpha$) are no more than harmonic-Einstein manifolds (with respect to $\varphi$ and $\alpha$) with vanishing $\varphi$-scalar curvature.
	\end{rmk}
	
	\begin{prop}\label{prop harm einst soddisfano einst equ}
		Let $(M,g)$ be a semi-Riemannian manifold of dimension $m\geq 3$, $\varphi:M\to N$ a harmonic map with target a Riemannian manifold $(N,\eta)$ and $\alpha\in\mathbb{R}\setminus\{0\}$. Then $(M,g)$ is harmonic-Einstein (with respect to $\varphi$ and $\alpha$) if and only if
		\begin{equation}\label{einst field equation}
			G+\Lambda g=\alpha T,
		\end{equation}
		where $G$ is the Einstein tensor of $(M,g)$, that is,
		\begin{equation*}
			G:=\mbox{Ric}-\frac{S}{2}g,
		\end{equation*}
		$T$ is the energy-stress tensor of the map $\varphi$ that is given by \eqref{stress energy tensor}. If this the case, the cosmological constant is given by
		\begin{equation}\label{cosm const}
			\Lambda:=\frac{m-2}{2m}S^{\varphi}\in\mathbb{R}.
		\end{equation}
	\end{prop}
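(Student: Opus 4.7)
The plan is to show that, under the harmonicity of $\varphi$, the equation $G + \Lambda g = \alpha T$ is an algebraic rewriting of $\mathring{\mathrm{Ric}}^{\varphi} = 0$, with $\Lambda$ forced to equal $\frac{m-2}{2m}S^{\varphi}$. No deep geometry is needed beyond the generalized Schur identity \eqref{div of phi Ricci}; the whole proof is a trace computation.

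First I would expand the right-hand side using \eqref{stress energy tensor} and the definition of $G$:
\begin{equation*}
G + \Lambda g - \alpha T = \mathrm{Ric} - \alpha\,\varphi^*\eta - \tfrac{1}{2}(S - \alpha|d\varphi|^2)g + \Lambda g = \mathrm{Ric}^{\varphi} - \tfrac{S^{\varphi}}{2}g + \Lambda g.
\end{equation*}
So the Einstein field equation \eqref{einst field equation} is equivalent to
\begin{equation*}
\mathrm{Ric}^{\varphi} = \left(\tfrac{S^{\varphi}}{2} - \Lambda\right)g.
\end{equation*}

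For the direction ($\Leftarrow$): if the above holds, take the $g$-trace to obtain $S^{\varphi} = m\bigl(\tfrac{S^{\varphi}}{2} - \Lambda\bigr)$, which forces $\Lambda = \tfrac{m-2}{2m}S^{\varphi}$ (in particular $S^{\varphi}$ must be constant since $\Lambda \in \mathbb{R}$ and $m\geq 3$), and then substituting back gives $\mathrm{Ric}^{\varphi} = \tfrac{S^{\varphi}}{m}g$, i.e.\ $\mathring{\mathrm{Ric}}^{\varphi}=0$. Combined with the standing hypothesis $\tau(\varphi)=0$, this is exactly \eqref{harm einst}.

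For the direction ($\Rightarrow$): assume $(M,g)$ is harmonic-Einstein. Then $\mathrm{Ric}^{\varphi} = \tfrac{S^{\varphi}}{m}g$, so
\begin{equation*}
G + \tfrac{m-2}{2m}S^{\varphi}\, g - \alpha T = \mathrm{Ric}^{\varphi} - \tfrac{S^{\varphi}}{2}g + \tfrac{m-2}{2m}S^{\varphi}\,g = \tfrac{S^{\varphi}}{m}g - \tfrac{S^{\varphi}}{m}g = 0,
\end{equation*}
which proves \eqref{einst field equation} with $\Lambda$ as in \eqref{cosm const}. The only point requiring justification is that $\Lambda$ is a genuine constant; this follows from the generalized Schur identity \eqref{div of phi Ricci}. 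Indeed, taking the divergence of $\mathrm{Ric}^{\varphi} = \tfrac{S^{\varphi}}{m}g$ gives $\mathrm{div}(\mathrm{Ric}^{\varphi}) = \tfrac{1}{m}dS^{\varphi}$, while $\tau(\varphi)=0$ together with the computation in the Remark cited after \eqref{stress energy tensor} gives $\mathrm{div}(T)=0$, so \eqref{div of phi Ricci} yields $\tfrac{1}{m}dS^{\varphi} = \tfrac{1}{2}dS^{\varphi}$, hence $dS^{\varphi} = 0$ because $m\geq 3$.

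There is no real obstacle here; the only mild subtlety is remembering that the constancy of $\Lambda$ is not automatic from the pointwise identity but follows from the coupling of $\mathring{\mathrm{Ric}}^{\varphi}=0$ with $\tau(\varphi)=0$ through Schur. The rest is bookkeeping between $S$, $S^{\varphi}$, $\varphi^*\eta$, and $|d\varphi|^2$.
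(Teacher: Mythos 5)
Your proof is correct and follows essentially the same route as the paper: rewrite $G+\Lambda g-\alpha T$ as $\mathrm{Ric}^{\varphi}-\tfrac{S^{\varphi}}{2}g+\Lambda g$, trace to pin down $\Lambda=\tfrac{m-2}{2m}S^{\varphi}$, and invoke the generalized Schur identity \eqref{div of phi Ricci} together with $\tau(\varphi)=0$ for the constancy of $S^{\varphi}$. Your write-up is in fact slightly more complete, since the paper dismisses the converse with ``follows analogously'' while you carry out the trace argument explicitly.
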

	\begin{proof}
		Notice that, since $\varphi$ is harmonic, $(M,g)$ is harmonic-Einstein with respect to $\varphi$ and $\alpha$ if and only if the first equation of \eqref{harm einst} holds, that is,
		\begin{equation}\label{eq harm einst per mostare sol eq einst}
			\mbox{Ric}^{\varphi}=\frac{S^{\varphi}}{m}g.
		\end{equation}
		Assume \eqref{eq harm einst per mostare sol eq einst} holds. Using the definition of $\varphi$-Ricci tensor, the relation $S^{\varphi}=S-\alpha|d\varphi|^2$ and \eqref{eq harm einst per mostare sol eq einst} we obtain
		\begin{align*}
			G=&\mbox{Ric}-\frac{S}{2}g\\
			=&\mbox{Ric}^{\varphi}+\alpha\varphi^*\eta-\frac{S^{\varphi}}{2}g+\alpha\frac{|d\varphi|^2}{2}g\\
			=&\frac{S^{\varphi}}{m}g-\frac{S^{\varphi}}{2}g+\alpha\left(\varphi^*\eta-\frac{|d\varphi|^2}{2}g\right),
		\end{align*}
		that gives \eqref{einst field equation}, by setting $\Lambda$ as in \eqref{cosm const} (that is constant since it is a constant multiple of the $\varphi$-scalar curvature) and recalling the definition \eqref{stress energy tensor} of $T$. The converse implication follows analogously.
	\end{proof}
	\begin{rmk}\label{rmk sol eq Einstein}
		Four dimensional harmonic-Einstein Lorentzian manifolds $(M,g)$ with respect to a smooth map $\varphi:M\to N$, with target a Riemannian manifold $(N,\eta)$, and
		\begin{equation}\label{alpha per eq campo einstein}
			\alpha=\frac{8\pi \mathcal{G}}{c^4},
		\end{equation}
		where $\mathcal{G}$ is Newton's gravitational constant and $c$ is the speed of light in vacuum, are solutions of the Einstein field equations with cosmological constant $\Lambda$ given by \eqref{cosm const} and as field source the wave map (i.e., harmonic map with source a Lorentzian manifold) $\varphi$, see Section 6.5 of \cite{C-B}. In particular, $\varphi$-Ricci flat (with respect to $\alpha$ given by \eqref{alpha per eq campo einstein}) solves the Einstein field equations with the absence of cosmological constant, i.e., with $\Lambda=0$.
	\end{rmk}
	
	\section{Transformation laws of $\varphi$-curvature under a conformal change of metric}\label{section Transformation laws of curvature under a conformal change of metric}
	
	Let $g$ be a pseudo-Riemannian metric on $M$. Let $h\in\mathcal{C}^{\infty}(M)$ and denote
	\begin{equation}\label{conf change metric}
	\widetilde{g}:=e^{-2h}g.
	\end{equation}
	
	In this Section we denote with a tilde the tensors depending on the metric $\widetilde{g}$ and without subscripts the ones depending on $g$, for instance, for the Ricci tensors
	\begin{equation*}
	\widetilde{\mbox{Ric}}\equiv \mbox{Ric}_{\widetilde{g}}, \quad \mbox{Ric}\equiv \mbox{Ric}_g,
	\end{equation*}
	and we do the same for their components.
	
	The following Proposition is well known (compare to 1.159 of \cite{B}, that deals with the Riemannian case).
	\begin{prop}
		Let $\{\theta^i\}$, $\{e_i\}$, $\{\theta^i_j\}$ and $\{\Theta^i_j\}$ be, respectively, a local $g$-orthonormal coframe, the dual frame, the Levi-Civita connection forms and the curvature forms for the pseudo-Riemannian metric $g$ of $M$ on an open subset $\mathcal{U}$. Let $h\in\mathcal{C}^{\infty}(M)$ and set $\widetilde{g}$ as in \eqref{conf change metric}.
		\begin{itemize}
			\item A $\widetilde{g}$-orthonormal coframe $\{\widetilde{\theta}^i\}$ on $\mathcal{U}$ is given by
			\begin{equation}\label{def tilde theta}
			\widetilde{\theta}^i=e^{-h}\theta^i.
			\end{equation}
			\item The dual frame $\{\widetilde{e}_i\}$ of $\{\widetilde{\theta}^i\}$ is given by
			\begin{equation}\label{frame conf change metric}
			\widetilde{e}_i:=e^he_i,
			\end{equation}
			\item The Levi-Civita connection forms $\{\widetilde{\theta}^i_j\}$ for $\widetilde{g}$ on $\mathcal{U}$ are given by
			\begin{equation}\label{conn form change metric}
			\widetilde{\theta}^i_j=\theta^i_j-h_j\theta^i+h^i\theta_j.
			\end{equation}
			\item The components of the Riemann tensor of $(M,\widetilde{g})$ in the local coframe $\{\widetilde{\theta}^i\}$ are given by
			\begin{equation}\label{curv form conf change metric}
			\begin{aligned}
			e^{-2h}\widetilde{R}_{jikt}=&R_{jikt}+(h_{jk}\eta_{it}-h_{tj}\eta_{ik}+h_{it}\eta_{jk}-h_{ik}\eta_{tj})\\
			&+(h_jh_k\eta_{it}-h_jh_t\eta_{ik}+h_ih_t\eta_{jk}-h_ih_k\eta_{jt})-|\nabla h|^2(\eta_{jk}\eta_{it}-\eta_{jt}\eta_{ik}),
			\end{aligned}
			\end{equation}
			where $R_{jikt}$ are the components of the Riemann tensor of $(M,g)$ in the local coframe $\{\theta^i\}$.
			In global notation
			\begin{equation}\label{riem conf change metric}
			e^{2h}\widetilde{\mbox{Riem}}=\mbox{Riem}+\left(\mbox{Hess}(h)+dh\otimes dh-\frac{|\nabla h|^2}{2}g\right)\circleland g.
			\end{equation}
			\item The Riemannian volume element of $\widetilde{g}$ is given by
			\begin{equation}\label{conf change metric vol element}
			\widetilde{\mu}=e^{-mh}\mu.
			\end{equation}
		\end{itemize}
	\end{prop}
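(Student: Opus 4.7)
The plan is to establish the five bullet points in order, leveraging the uniqueness of the Levi-Civita connection to avoid having to derive $\{\widetilde{\theta}^i_j\}$ from scratch and instead verify that the proposed forms satisfy the defining equations.

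First I would handle the coframe, dual frame and volume element, which are essentially immediate. Writing $g=\eta_{ij}\theta^i\otimes\theta^j$ and substituting into \eqref{conf change metric} gives $\widetilde{g}=\eta_{ij}(e^{-h}\theta^i)\otimes(e^{-h}\theta^j)$, so $\widetilde{\theta}^i:=e^{-h}\theta^i$ is $\widetilde{g}$-orthonormal, proving \eqref{def tilde theta}. The formula \eqref{frame conf change metric} then follows from the duality pairing $\widetilde{\theta}^i(\widetilde{e}_j)=\delta^i_j$, and \eqref{conf change metric vol element} from $\widetilde{\mu}=\widetilde{\theta}^1\wedge\ldots\wedge\widetilde{\theta}^m=e^{-mh}\theta^1\wedge\ldots\wedge\theta^m$ via \eqref{riem vol elem con moving frame}.

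For \eqref{conn form change metric} I would proceed by uniqueness: it suffices to check that the proposed $\widetilde{\theta}^i_j:=\theta^i_j-h_j\theta^i+h^i\theta_j$ satisfy the first structure equation \eqref{first struct equation} and the skew-symmetry \eqref{rel simmetria per forme levi civita} with respect to $\widetilde{g}$. Differentiating \eqref{def tilde theta} gives $d\widetilde{\theta}^i=-e^{-h}dh\wedge\theta^i+e^{-h}d\theta^i$, and substituting $d\theta^i=-\theta^i_j\wedge\theta^j$ and the proposed $\widetilde{\theta}^i_j$ into $\widetilde{\theta}^i_j\wedge\widetilde{\theta}^j=e^{-h}(\theta^i_j-h_j\theta^i+h^i\theta_j)\wedge\theta^j$, one uses that $\theta_j\wedge\theta^j=\eta_{jk}\theta^k\wedge\theta^j=0$ by symmetry of $\eta$ and the rewriting $-h_j\theta^i\wedge\theta^j=dh\wedge\theta^i$ to see the two terms cancel. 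Skew-symmetry, with $\widetilde{\theta}_{ij}=\eta_{ik}\widetilde{\theta}^k_j$, reduces to the corresponding property of $\theta_{ij}$ plus the fact that the added piece $-h_j\theta_i+h_i\theta_j$ is manifestly antisymmetric in $i,j$.

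The most substantial step is deriving \eqref{curv form conf change metric} (equivalently \eqref{riem conf change metric}) from the second structure equation $\widetilde{\Theta}^i_j=d\widetilde{\theta}^i_j+\widetilde{\theta}^i_k\wedge\widetilde{\theta}^k_j$. I would differentiate \eqref{conn form change metric} to get
\begin{equation*}
d\widetilde{\theta}^i_j=d\theta^i_j-dh_j\wedge\theta^i-h_j\,d\theta^i+dh^i\wedge\theta_j+h^i\,d\theta_j,
\end{equation*}
expand the wedge $\widetilde{\theta}^i_k\wedge\widetilde{\theta}^k_j=(\theta^i_k-h_k\theta^i+h^i\theta_k)\wedge(\theta^k_j-h_j\theta^k+h^k\theta_j)$, and use the first structure equation repeatedly to eliminate the $d\theta^i$'s. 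To convert the exterior derivatives of the components of $dh$ I would invoke the definition of covariant derivative, writing $dh_j=h_{jk}\theta^k+h_k\theta^k_j$ so that the antisymmetric combinations produce the Hessian terms $h_{jk}$ while the connection-form remainders combine with the products of first derivatives. After collecting everything in the basis $\theta^k\wedge\theta^t$ and comparing with \eqref{comp of curv forms are components of Riem} applied to $\widetilde{\Theta}^i_j$ (remembering $\widetilde{\theta}^k\wedge\widetilde{\theta}^t=e^{-2h}\theta^k\wedge\theta^t$, which accounts for the factor $e^{-2h}$ on the left of \eqref{curv form conf change metric}), the quadratic terms in $dh$ assemble into $h_jh_k\eta_{it}-h_jh_t\eta_{ik}+h_ih_t\eta_{jk}-h_ih_k\eta_{jt}$ and the ones involving $|\nabla h|^2$ appear via the trace $h^kh_k=|\nabla h|^2$. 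The main obstacle here is purely bookkeeping: making sure every cross-term between the $-h_j\theta^i$ and $h^i\theta_j$ contributions is tracked with the correct sign and then recognizing the resulting expression as a Kulkarni–Nomizu product with $g$, which yields the global form \eqref{riem conf change metric}.
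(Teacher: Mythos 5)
Your proposal is correct and takes essentially the same route as the paper's proof: the coframe, dual frame and volume element are handled by direct substitution, the connection forms are verified via the first structure equations together with skew-symmetry (invoking uniqueness), and the curvature components are extracted from the second structure equations using $dh_i=h_{ij}\theta^j+h_j\theta^j_i$ and a final skew-symmetrization against $\widetilde{\theta}^k\wedge\widetilde{\theta}^t=e^{-2h}\theta^k\wedge\theta^t$. The only cosmetic difference is that you write the second structure equation as $d\widetilde{\theta}^i_j+\widetilde{\theta}^i_k\wedge\widetilde{\theta}^k_j$ rather than the paper's index-lowered form $d\widetilde{\theta}^j_i+\widetilde{\theta}_{ki}\wedge\widetilde{\theta}^{kj}$, which is equivalent under the skew-symmetry relation \eqref{rel simmetria con gli eta per forme levi civita}.
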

	\begin{proof}
		Since on $\mathcal{U}$ we have $g=\eta_{ij}\theta^i\otimes \theta^j$, \eqref{conf change metric} gives $\widetilde{g}=\eta_{ij}\widetilde{\theta}^i\otimes \widetilde{\theta}^j$, where $\{\widetilde{\theta}^i\}$ are given by \eqref{def tilde theta}. This shows that $\{\widetilde{\theta}^i\}$ is a local $\widetilde{g}$-orthonormal coframe. By setting $\{\widetilde{e}_i\}$ as in \eqref{frame conf change metric} it is immediate to check that $\{\widetilde{e}_i\}$ is the dual frame of $\{\widetilde{\theta}^i\}$.
		
		The first structure equations read
		\begin{equation*}
		d\widetilde{\theta}^i+\widetilde{\theta}^i_j\wedge\widetilde{\theta}^j=0.
		\end{equation*}
		Using \eqref{def tilde theta} the above gives
		\begin{equation*}
		e^{-h}(d\theta^i-dh\wedge \theta^i)+e^{-h}\widetilde{\theta}^i_j\wedge\theta^j=0,
		\end{equation*}
		that is,
		\begin{equation*}
		d\theta^i-dh\wedge \theta^i+\widetilde{\theta}^i_j\wedge\theta^j=0.
		\end{equation*}
		With the aid of the first structure equations for the coframe $\{\theta^i\}$ (see \eqref{first struct equation}) we get
		\begin{equation*}
		-\theta^i_j\wedge\theta^j-h_j\theta^j\wedge \theta^i+\widetilde{\theta}^i_j\wedge\theta^j=0,
		\end{equation*}
		that is,
		\begin{equation*}
		(\widetilde{\theta}^i_j-\theta^i_j+h_j\theta^i)\wedge \theta^j=0.
		\end{equation*}
		Assuming that the connection forms $\{\widetilde{\theta}^i_j\}$ are given by \eqref{conn form change metric} we obtain that the above equation is satisfied. Indeed, by plugging \eqref{conn form change metric} into the above we get $h^i\theta_j\wedge \theta^j=0$, that is satisfied. Moreover, using once again \eqref{conn form change metric} we deduce
		\begin{equation*}
		\eta_{ik}\widetilde{\theta}^k_j+\eta_{kj}\widetilde{\theta}^k_i=\eta_{ik}\theta^k_j+\eta_{kj}\theta^k_i,
		\end{equation*}
		hence the skew-symmetry $\widetilde{\theta}_{ij}+\widetilde{\theta}_{ji}=0$ follows immediately from the skew-symmetry $\theta_{ij}+\theta_{ji}=0$. Recalling that the connection forms are characterized by the skew symmetry and the validity of the first structure equations we obtain that $\{\widetilde{\theta}^i_j\}$ given by \eqref{conn form change metric} are the Levi-Civita connection forms for $\widetilde{g}$ on $\mathcal{U}$.
		
		The second structure equations for the metric $\widetilde{g}$ are given by
		\begin{equation*}
			\widetilde{\Theta}^j_i=d\widetilde{\theta}^j_i+\widetilde{\theta}_{ki}\land \widetilde{\theta}^{kj}.
		\end{equation*}
		Using \eqref{conn form change metric} we get
		\begin{align*}
			\widetilde{\Theta}^j_i=d(\theta^j_i-h_i\theta^j+h^j\theta_i)+(\theta_{ki}-h_i\theta_k+h_k\theta_i)\land (\theta^{kj}-h^j\theta^k+h^k\theta^j),
		\end{align*}
		that gives
		\begin{align*}
		\widetilde{\Theta}^j_i=&d\theta^j_i+\theta_{ki}\land \theta^{kj}+(dh^j-h_k\theta^{kj})\land\theta_i-(dh_i-h^k\theta_{ki})\land\theta^j\\
		&-h_i(d\theta^j-\theta^{kj}\land \theta_k)+h^j(d\theta_i-\theta_{ki}\land \theta^k)+h_ih^j\theta_k\land \theta^k\\
		&-h_ih^k\theta_k\land\theta^j-h_kh^j\theta_i\land \theta^k+h_kh^k\theta_i\land \theta^j.
		\end{align*}
		From the definition of covariant derivatives we get $h_{ij}\theta^j=dh_i-h_j\theta^j_i$ and by plugging it together with the first and the second structure equations for $g$ and $\theta_k\land \theta^k=0$ into the above we deduce
		\begin{align*}
		\widetilde{\Theta}^j_i=&\Theta^j_i+h^j_k\theta^k\land\theta_i-h_{ik}\theta^k\land\theta^j\\
		&+h_i(\theta^{jk}+\theta^{kj})\land \theta_k-h^j(\theta_{ik}+\theta_{ki})\land \theta^k\\
		&-h_ih^k\theta_k\land\theta^j-h_kh^j\theta_i\land \theta^k+|\nabla h|^2\theta_i\land \theta^j.
		\end{align*}
		Using the skew symmetry $\theta_{ij}+\theta_{ji}=0$ from the above we conclude
		\begin{align*}
		\eta_{js}\widetilde{\Theta}^j_i=\eta_{js}\Theta^j_i+(h_{sk}\eta_{it}-h_{ik}\eta_{st}-h_ih_k\eta_{st}+h_kh_s\eta_{it}+|\nabla h|^2\eta_{ik}\eta_{st})\theta^k\land\theta^t,
		\end{align*}
		that gives, recalling \eqref{comp of curv forms are components of Riem},
		\begin{align*}
		\frac{1}{2}\widetilde{R}_{jikt}\widetilde{\theta}^k\land \widetilde{\theta}^t=\left(\frac{1}{2}R_{jikt}+h_{jk}\eta_{it}-h_{ik}\eta_{jt}-h_ih_k\eta_{jt}+h_kh_j\eta_{it}+|\nabla h|^2\eta_{ik}\eta_{jt}\right)\theta^k\land\theta^t.
		\end{align*}
		Using \eqref{def tilde theta}, skew-symmetrizing the above we finally get
		\begin{align*}
			e^{-2h}\widetilde{R}_{jikt}=&R_{jikt}+h_{jk}\eta_{it}-h_{ik}\eta_{jt}-h_{jt}\eta_{ik}+h_{it}\eta_{jk}\\
			&-h_ih_k\eta_{jt}+h_kh_j\eta_{it}+h_ih_t\eta_{jk}-h_th_j\eta_{ik}+|\nabla h|^2(\eta_{ik}\eta_{jt}-\eta_{it}\eta_{jk})
		\end{align*}
		that is, \eqref{curv form conf change metric} holds. Since
		\begin{equation*}
		\widetilde{\mbox{Riem}}=\widetilde{R}_{jikt}\widetilde{\theta}^k\otimes\widetilde{\theta}^t\otimes\widetilde{\theta}^i\otimes\widetilde{\theta}^j=e^{-4h}\widetilde{R}_{jikt}\theta^k\otimes\theta^t\otimes\theta^i\otimes\theta^j,
		\end{equation*}
		the validity of \eqref{curv form conf change metric} implies \eqref{riem conf change metric}.
		
		The validity of \eqref{conf change metric vol element} follows easily from \eqref{riem vol elem con moving frame} and \eqref{def tilde theta}.
	\end{proof}
	
	Let $(N,\eta)$ be a Riemannian manifold, we denote by $\{E_a\}$, $\{\omega^a\}$ and $\{\omega^a_b\}$ the local orthonormal frame, coframe and the corresponding Levi-Civita connection forms on an open set $\mathcal{V}$ such that $\varphi^{-1}(\mathcal{V})\subseteq \mathcal{U}$. Clearly $d\varphi$ is independent on the choice of the metric on $M$, it means that
	\begin{equation}\label{coeff d phi conformal change metric}
	\widetilde{\varphi}^a_i=e^{h}\varphi^a_i,
	\end{equation}
	where we used \eqref{def tilde theta} and
	\begin{equation*}
	\varphi^a_i\theta
	^i\otimes E_a=d\varphi=\widetilde{\varphi}^a_i\widetilde{\theta}^i\otimes E_a.
	\end{equation*}
	As an immediate consequence we get
	\begin{equation}\label{conf change energy of phi}
	\widetilde{|d\varphi|}^2=e^{2h}|d\varphi|^2.
	\end{equation}
	By definition
	\begin{equation*}
	\nabla d\varphi=\varphi^a_{ij}\theta^j\otimes \theta^i\otimes E_a, \quad \varphi^a_{ij}\theta^j=d\varphi^a_i-\varphi^a_j\theta^j_i+\varphi^b_i\omega^a_b
	\end{equation*}
	and
	\begin{equation*}
	\widetilde{\nabla} d\varphi=\widetilde{\varphi}^a_{ij}\widetilde{\theta}^j\otimes \widetilde{\theta}^i\otimes E_a, \quad \widetilde{\varphi}^a_{ij}\widetilde{\theta}^j=d\widetilde{\varphi}^a_i-\widetilde{\varphi}^a_j\widetilde{\theta}^j_i+\widetilde{\varphi}^b_i\omega^a_b.
	\end{equation*}
	We denote by $\widetilde{\tau}(\varphi)$ the tension of the map
	\begin{equation*}
	\varphi:(M,\widetilde{g})\to (N,\eta),
	\end{equation*}
	in components
	\begin{equation*}
	\widetilde{\tau}(\varphi)^a=\eta^{ij}\widetilde{\varphi}^a_{ij}.
	\end{equation*}
	In the next Proposition we determine the transformation laws for the quantities of our interest related to the smooth map $\varphi$, under the conformal change of the metric \eqref{conf change metric}.
	\begin{prop}
		In the notations above, in a local orthonormal coframe,
		\begin{equation}\label{coeff nabla d phi conf change metric}
		\widetilde{\varphi}^a_{ij}=e^{2h}\left(\varphi^a_{ij}+\varphi^a_ih_j+\varphi^a_jh_i-\varphi^a_kh^k\eta_{ij}\right),
		\end{equation}
		in particular
		\begin{equation}\label{tension phi conf change metric}
		\widetilde{\tau}(\varphi)=e^{2h}[\tau(\varphi)-(m-2)d\varphi(\nabla h)].
		\end{equation}
		Moreover, in a local orthonormal coframe,
		\begin{equation}\label{conf chan third der of d phi}
		\widetilde{\tau}(\varphi)^a_k=e^{3h}\left[\tau(\varphi)^a_k-(m-2)\varphi^a_{ik}h^i-(m-2)\varphi^a_ih^i_k+2\tau^g(\varphi)^ah_k-2(m-2)\varphi^a_ih^ih_k\right].
		\end{equation}
	\end{prop}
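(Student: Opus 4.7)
The strategy is to unwind each relation directly from the defining formula for the coefficients, using the already-established transformation laws \eqref{def tilde theta}, \eqref{conn form change metric} and \eqref{coeff d phi conformal change metric}. The three assertions correspond to differentiating $d\varphi$ once, to tracing that identity, and to differentiating one more time; nothing deeper than the product rule and the skew-symmetries of the connection forms is required.

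For \eqref{coeff nabla d phi conf change metric}, I start from
\begin{equation*}
\widetilde{\varphi}^a_{ij}\widetilde{\theta}^j = d\widetilde{\varphi}^a_i - \widetilde{\varphi}^a_j\widetilde{\theta}^j_i + \widetilde{\varphi}^b_i\omega^a_b,
\end{equation*}
plug in $\widetilde{\varphi}^a_i = e^{h}\varphi^a_i$ and $\widetilde{\theta}^j_i = \theta^j_i - h_i\theta^j + h^j\theta_i$, and use the Leibniz rule on $d(e^h\varphi^a_i) = e^h h_k\theta^k\varphi^a_i + e^h d\varphi^a_i$. The terms $e^h(d\varphi^a_i - \varphi^a_j\theta^j_i + \varphi^b_i\omega^a_b)$ collapse to $e^h\varphi^a_{ij}\theta^j$, and the remaining corrections coming from $dh$ and from the two new pieces of $\widetilde{\theta}^j_i$ yield, after using $\theta_i = \eta_{ij}\theta^j$ and then \eqref{def tilde theta} to convert $\widetilde{\theta}^j$ back to $\theta^j$, exactly the claimed formula. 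Tracing with $\eta^{ij}$ gives \eqref{tension phi conf change metric}, since the bracket contracts to $2\varphi^a_ih^i - m\,\varphi^a_kh^k = -(m-2)\,d\varphi(\nabla h)^a$.

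For \eqref{conf chan third der of d phi}, I apply the defining relation
\begin{equation*}
\widetilde{\tau}(\varphi)^a_k\widetilde{\theta}^k = d\widetilde{\tau}(\varphi)^a + \widetilde{\tau}(\varphi)^b\omega^a_b
\end{equation*}
to the already-computed expression $\widetilde{\tau}(\varphi)^a = e^{2h}[\tau(\varphi)^a - (m-2)\varphi^a_i h^i]$. The differential produces the scaling term $2e^{2h}h_k\theta^k\,[\tau(\varphi)^a - (m-2)\varphi^a_i h^i]$, together with $e^{2h}d\tau(\varphi)^a$ and $-(m-2)e^{2h}d(\varphi^a_i h^i)$. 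I rewrite
\begin{equation*}
d\tau(\varphi)^a = \tau(\varphi)^a_k\theta^k - \tau(\varphi)^b\omega^a_b,\qquad dh^i = h^i_k\theta^k - h^j\theta^i_j,
\end{equation*}
and $d\varphi^a_i = \varphi^a_{ij}\theta^j + \varphi^a_j\theta^j_i - \varphi^b_i\omega^a_b$. Substituting into $d(\varphi^a_i h^i)$, the terms $\varphi^a_j\theta^j_i h^i$ and $-\varphi^a_i h^j\theta^i_j$ cancel by relabeling, leaving $d(\varphi^a_i h^i) = (\varphi^a_{ij}h^i + \varphi^a_i h^i_j)\theta^j - \varphi^b_i h^i\omega^a_b$. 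The $\omega^a_b$ contributions then cancel against the $\widetilde{\tau}(\varphi)^b\omega^a_b$ term on the left-hand side. Finally, converting $\widetilde{\theta}^k = e^{-h}\theta^k$ produces the overall factor $e^{3h}$ and the claimed identity.

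The main obstacle is purely bookkeeping in the last step: one must be careful not to naively differentiate $h^i$ (which carries an index and therefore couples to $\theta^i_j$) and one must track the target-connection forms $\omega^a_b$ closely, since their cancellation is what guarantees the intrinsic character of the result. Once those two points are handled, all the remaining manipulations are algebraic rearrangements of terms already appearing in the three expansions above.
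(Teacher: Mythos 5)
Your proposal is correct. For \eqref{coeff nabla d phi conf change metric} and \eqref{tension phi conf change metric} you follow essentially the same computation as the paper: expand $d(e^h\varphi^a_i)$ against the transformed connection forms \eqref{conn form change metric} and trace. For \eqref{conf chan third der of d phi}, however, you take a genuinely different and leaner route. The paper introduces the auxiliary tensor $T^a_{ij}=\varphi^a_{ij}+\varphi^a_ih_j+\varphi^a_jh_i-\varphi^a_th^t\eta_{ij}$, computes the full third covariant derivative $\widetilde{\varphi}^a_{ijk}$ (which drags in the $M$-connection forms $\widetilde{\theta}^k_i$ twice), and only traces with $\eta^{ij}$ at the very end. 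You instead trace first and differentiate the resulting section $\widetilde{\tau}(\varphi)$ of $\varphi^{-1}TN$ via $\widetilde{\tau}(\varphi)^a_k\widetilde{\theta}^k=d\widetilde{\tau}(\varphi)^a+\widetilde{\tau}(\varphi)^b\omega^a_b$; this is legitimate because covariant differentiation commutes with the metric contraction (the terms $\eta^{ij}\widetilde{\varphi}^a_{kj}\widetilde{\theta}^k_i$ vanish by the symmetry of $\widetilde{\varphi}^a_{kj}$ against the skew-symmetry of $\widetilde{\theta}^{kj}$), and it has the advantage that no $M$-connection forms appear at all, only $\omega^a_b$, whose cancellation you correctly track. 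Your two flagged subtleties — that $dh^i=h^i_k\theta^k-h^j\theta^i_j$ carries a connection term, and that the $\omega^a_b$ contributions must cancel against $\widetilde{\tau}(\varphi)^b\omega^a_b$ — are exactly the points where the computation could go wrong, and you handle both; the resulting identity matches \eqref{conf chan third der of d phi} term by term. The only cost of your shortcut is that you obtain the traced identity directly without producing the intermediate formula for all components $\widetilde{\varphi}^a_{ijk}$, which the paper's method yields as a by-product, but that intermediate formula is not needed elsewhere.
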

	\begin{proof}
		The validity of \eqref{coeff nabla d phi conf change metric} follows easily using \eqref{def tilde theta}, the definition of $\widetilde{\varphi}^a_{ij}$, \eqref{coeff d phi conformal change metric}, \eqref{conn form change metric} and the definition of $\varphi^a_{ij}$ as follows:
		\begin{align*}
		\widetilde{\varphi}^a_{ij}e^{-h}\theta^j=&\widetilde{\varphi}^a_{ij}\widetilde{\theta}^j\\
		=&d\widetilde{\varphi}^a_i-\widetilde{\varphi}^a_j\widetilde{\theta}^j_i+\widetilde{\varphi}^b_i\omega^a_b\\
		=&d(e^{h}\varphi^a_i)-e^{h}\varphi^a_j\left(\theta^j_i-h_i\theta^j+h^j\theta_i\right)+e^{h}\varphi^b_i\omega^a_b\\
		=&e^{h}(d\varphi^a_i-\varphi^a_j\theta^j_i+\varphi^b_i\omega^a_b)+e^{h}\varphi^a_i dh+e^{h}\varphi^a_j(h_i\theta^j-h^j\theta_i)\\
		=&e^{h}\left[\varphi^a_{ij}+\varphi^a_ih_j+\varphi^a_jh_i-\varphi^a_kh^k\eta_{ij}\right]\theta^j.
		\end{align*}
		Applying $\eta^{ij}$ to \eqref{coeff nabla d phi conf change metric} we immediately get \eqref{tension phi conf change metric}. For convenience we denote
		\begin{equation*}
		T^a_{ij}=\varphi^a_{ij}+\varphi^a_ih_j+\varphi^a_jh_i-\varphi^a_th^t\eta_{ij}.
		\end{equation*}
		Using the definition of covariant derivative, with the aid of \eqref{conn form change metric} we get
		\begin{align*}
		\widetilde{\varphi}^a_{ijk}\widetilde{\theta}^k=&d\widetilde{\varphi}^a_{ij}-\widetilde{\varphi}^a_{kj}\widetilde{\theta}^k_i-\widetilde{\varphi}^a_{ik}\widetilde{\theta}^k_j+\widetilde{\varphi}^b_{ij}\omega^a_b\\
		=&d(e^{2h}T^a_{ij})-e^{2h}T^a_{kj}(\theta^k_i+h^k\theta_i-h_i\theta^k)-e^{2h}T^a_{ik}(\theta^k_j+h^k\theta_j-h_j\theta^k)+e^{2h}T^b_{ij}\omega^a_b.
		\end{align*}
		Thus, by plugging \eqref{def tilde theta} into the above and doing some calculations,
		\begin{align*}
		e^{-3h}\widetilde{\varphi}^a_{ijk}\theta^k=&2T^a_{ij}h_k\theta^k+T^a_{ij,k}\theta^k-T^a_{kj}(h^k\theta_i-h_i\theta^k)-T^a_{ik}(h^k\theta_j-h_j\theta^k)\\
		=&(T^a_{ij,k}+2T^a_{ij}h_k+T^a_{kj}h_i-T^a_{tj}h^t\eta_{ik}+T^a_{ik}h_j-T^a_{it}h^t\eta_{jk})\theta^k,
		\end{align*}
		that gives,
		\begin{align*}
		e^{-3h}\widetilde{\varphi}^a_{ijk}=&T^a_{ij,k}+2T^a_{ij}h_k+T^a_{kj}h_i-T^a_{tj}h^t\eta_{ik}+T^a_{ik}h_j-T^a_{it}h^t\eta_{jk}.
		\end{align*}
		Applying $\eta_{ij}$ and using the relations
		\begin{equation*}
		\eta^{ij}T^a_{ij}=\tau^g(\varphi)^a-(m-2)\varphi^a_ih^i, \quad \eta^{ij}T^a_{ij,k}=\tau(\varphi)^a_{k}-(m-2)\varphi^a_{ik}h^i-(m-2)\varphi^a_ih^i_k
		\end{equation*}
		(the first follows immediately from the definition of $T^a_{ij}$ while the second is obtained taking covariant derivative of the first), we get from the above
		\begin{align*}
		e^{-3h}\eta^{ij}\widetilde{\varphi}^a_{ijk}=&\eta^{ij}T^a_{ij,k}+2\eta^{ij}T^a_{ij}f_k+T^a_{ki}f^i-T^a_{tk}f^t+T^a_{ik}f^i-T^a_{kt}f^t\\
		=&\eta^{ij}T^a_{ij,k}+2\eta^{ij}T^a_{ij}h_k\\
		=&\tau(\varphi)^a_{k}-(m-2)\varphi^a_{ik}h^i-(m-2)\varphi^a_ih^i_k+2h_k(\tau^g(\varphi)^a-(m-2)\varphi^a_ih^i),
		\end{align*}
		that is \eqref{conf chan third der of d phi}.
	\end{proof}

	\begin{prop}
		The components of the $\varphi$-Ricci tensor of $(M,\widetilde{g})$ in the coframe $\{\widetilde{\theta}^i\}$ are given by
		\begin{equation}\label{conformal change metric ricci in local coordinates}
		e^{-2h}\widetilde{R}^{\varphi}_{ij}=R^{\varphi}_{jk}+(m-2)h_{ij}+(m-2)h_ih_j+[\Delta h-(m-2)|\nabla h|^2]\eta_{ij},
		\end{equation}
		where $R^{\varphi}_{jk}$ are the components of the $\varphi$-Ricci tensor of $(M,g)$ in the coframe $\{\theta^i\}$. In global notation
		\begin{equation}\label{transf law ricci phi}
		\widetilde{\mbox{Ric}}^{\varphi}=\mbox{Ric}^{\varphi}+(m-2)\mbox{Hess}(h)+(m-2)dh\otimes dh+[\Delta h-(m-2)|\nabla h|^2]g.
		\end{equation}
		Moreover the $\varphi$-scalar curvature of $(M,\widetilde{g})$ is given by
		\begin{equation}\label{conformal change metric scalar curvature}
		e^{-2h}\widetilde{S}^{\varphi}=S^{\varphi}+(m-1)[2\Delta h-(m-2)|\nabla h|^2],
		\end{equation}
		where $S^{\varphi}$ is the $\varphi$-scalar curvature of $(M,g)$.
	\end{prop}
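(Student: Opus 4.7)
The plan is to reduce both transformation laws to two ingredients that are already essentially in hand: (i) the classical transformation law for the Ricci tensor under $\widetilde{g}=e^{-2h}g$, which is just the trace of \eqref{curv form conf change metric}; and (ii) the conformal invariance of the pullback $\varphi^*\eta$, which is visible directly from \eqref{def tilde theta} and \eqref{coeff d phi conformal change metric}. The $\varphi$-scalar identity will then drop out by tracing the $\varphi$-Ricci identity with $\eta^{ij}$.

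First I would contract the four-index formula \eqref{curv form conf change metric} by applying $\eta^{jk}$, so as to read off the components of $\widetilde{\mbox{Ric}}$ in the $\widetilde{g}$-orthonormal coframe $\{\widetilde{\theta}^i\}$ via \eqref{ricci tensor trace riem}. Using $\eta^{jk}\eta_{jk}=m$, $\eta^{jk}h_{jk}=\Delta h$, and $\eta^{jk}h_jh_k=|\nabla h|^2$, and being careful with the four Kulkarni--Nomizu-like Hessian terms and the four quadratic-in-$dh$ terms (each of which splits into a genuine $h_{ij}$ or $h_ih_j$ contribution and a trace contribution absorbed into $\eta_{ij}$), one recovers the classical relation
\begin{equation*}
e^{-2h}\widetilde{R}_{ij}=R_{ij}+(m-2)h_{ij}+(m-2)h_ih_j+[\Delta h-(m-2)|\nabla h|^2]\eta_{ij}.
\end{equation*}
Next, I would observe that $\varphi^*\eta$ depends only on $d\varphi$ and the target metric $\eta$: combining \eqref{coeff d phi conformal change metric} with \eqref{def tilde theta} gives $\widetilde{\varphi}^a_i\widetilde{\varphi}^a_j\widetilde{\theta}^i\otimes\widetilde{\theta}^j=\varphi^a_i\varphi^a_j\theta^i\otimes\theta^j$, so that in the coframes under consideration the component identity is $\widetilde{\varphi}^a_i\widetilde{\varphi}^a_j=e^{2h}\varphi^a_i\varphi^a_j$. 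Subtracting $\alpha$ times this from the Ricci transformation displayed above produces exactly \eqref{conformal change metric ricci in local coordinates}, and its global version is \eqref{transf law ricci phi}.

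The last step is to contract \eqref{conformal change metric ricci in local coordinates} with $\eta^{ij}$: the first two non-trivial summands contribute $(m-2)\Delta h+(m-2)|\nabla h|^2$, while the trace of the $\eta_{ij}$ factor gives $m\Delta h-m(m-2)|\nabla h|^2$; summing yields $2(m-1)\Delta h-(m-1)(m-2)|\nabla h|^2$ and hence \eqref{conformal change metric scalar curvature}. The only genuine obstacle in the whole argument is bookkeeping in the contraction of \eqref{curv form conf change metric}: the symmetries of the Kulkarni--Nomizu pieces have to be used carefully so that the various $h_ih_j$ and $|\nabla h|^2\eta_{ij}$ contributions end up with the correct signs and coefficients. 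Once the conformal invariance of $\varphi^*\eta$ is recognized, no further ideas are needed.
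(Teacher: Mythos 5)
Your proposal is correct and is essentially the paper's own argument: the paper likewise first contracts \eqref{curv form conf change metric} to obtain the classical Ricci transformation law, then uses \eqref{coeff d phi conformal change metric} (i.e.\ $\widetilde{\varphi}^a_i\widetilde{\varphi}^a_j=e^{2h}\varphi^a_i\varphi^a_j$, the conformal invariance of $\varphi^*\eta$) to pass to the $\varphi$-Ricci tensor, and finally traces with $\eta^{ij}$ to get \eqref{conformal change metric scalar curvature}. Your coefficient bookkeeping in the trace step checks out, so nothing further is needed.
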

	\begin{proof}
		First of all we prove
		\begin{equation}\label{comp ricci conf change metric}
		e^{-2h}\widetilde{R}_{jk}=R_{jk}+(m-2)h_{jk}+(m-2)h_jh_k+[\Delta h-(m-2)|\nabla h|^2]\eta_{jk}.
		\end{equation}
		From \eqref{ricci tensor trace riem}, using \eqref{curv form conf change metric} we get
		\begin{equation*}
		e^{-2h}\widetilde{R}_{jk}=R_{jk}+(mh_{jk}-h_{kj}+\Delta h\eta_{jk}-h_{jk})+(mh_jh_k-h_jh_k+|\nabla h|^2\eta_{jk}-h_jh_k)-|\nabla h|^2(m\eta_{jk}-\eta_{jk}),
		\end{equation*}
		or equivalently, \eqref{comp ricci conf change metric}.
		
		The validity of \eqref{conformal change metric ricci in local coordinates} follows immediately from \eqref{comp ricci conf change metric} and \eqref{coeff d phi conformal change metric}. Using \eqref{transf law ricci phi} and \eqref{def tilde theta} we deduce that \eqref{conformal change metric ricci in local coordinates} holds.
		
		Applying $\eta^{ij}$ to \eqref{conformal change metric ricci in local coordinates} we immediately get \eqref{conformal change metric scalar curvature}.
	\end{proof}

	From now on we assume $m\geq 3$ and we set
	\begin{equation}\label{def f}
	h:=\frac{1}{m-2}f,
	\end{equation}
	so that \eqref{conf change metric} reads
	\begin{equation*}
		\widetilde{g}=e^{-\frac{2f}{m-2}}g.
	\end{equation*}
	Then \eqref{transf law ricci phi}, \eqref{conformal change metric scalar curvature}, \eqref{tension phi conf change metric} and \eqref{conf chan third der of d phi} reads, respectively,
	\begin{equation}\label{transf law ricci phi con f}
	\widetilde{\mbox{Ric}}^{\varphi}=\mbox{Ric}^{\varphi}+\mbox{Hess}(f)+\frac{1}{m-2}(df\otimes df+\Delta_ff g),
	\end{equation}
	\begin{equation}\label{transf law phi scalar con f}
	e^{-\frac{2}{m-2}f}\widetilde{S}^{\varphi}=S^{\varphi}+\frac{m-1}{m-2}[2\Delta f-|\nabla f|^2],
	\end{equation}
	\begin{equation}\label{tension phi conf change metric con f}
	\widetilde{\tau}(\varphi)=e^{\frac{2}{m-2}f}[\tau(\varphi)-d\varphi(\nabla f)]
	\end{equation}
	and
	\begin{equation}\label{conf chan third der of d phi con f}
	\eta^{ij}\widetilde{\varphi}^a_{ijk}=e^{\frac{3}{m-2}f}\left[\eta^{ij}\varphi^a_{ijk}-\varphi^a_{ik}f^i-\varphi^a_if^i_k+\frac{2}{m-2}(\tau^g(\varphi)^a-\varphi^a_if^i)f_k\right].
	\end{equation}

	Then it is easy to obtain, using the definition of $\varphi$-Schouten and \eqref{transf law ricci phi con f} and \eqref{transf law phi scalar con f},
	\begin{equation}\label{conformal change metric schouten tensor}
	\widetilde{A}^{\varphi}=A^{\varphi}+\mbox{Hess}(f)+\frac{1}{m-2}\left(df\otimes df-\frac{|\nabla f|^2}{2}g\right),
	\end{equation}
	that locally reads
	\begin{equation}\label{conformal change metric schouten tensor local coordinates}
	e^{-\frac{2}{m-2}f}\widetilde{A}^{\varphi}_{ij}=A^{\varphi}_{ij}+f_{ij}+\frac{1}{m-2}\left(f_if_j-\frac{|\nabla f|^2}{2}\eta_{ij}\right).
	\end{equation}
	Moreover, from the definition \eqref{def of phi Weyl} of $\varphi$-Weyl, using \eqref{riem conf change metric} and \eqref{conformal change metric schouten tensor}, we immediately get
	\begin{equation}\label{conf invariance phi weyl}
		e^{\frac{2f}{m-2}}\widetilde{W}^{\varphi}=W^{\varphi},
	\end{equation}
	that is, the $(1,3)$ version of the $\varphi$-Weyl is conformal invariant.
	
	In the next Proposition we deal with the transformation laws for the $\varphi$-Cotton tensor.
	\begin{prop}
		In the notations above, the components of the $\varphi$-Cotton tensor of $(M,\widetilde{g})$ with respect to the coframe $\{\widetilde{\theta}^i\}$ are given by
		\begin{equation}\label{conformal change metric cotton tensor}
		e^{-\frac{3}{m-2}f}\widetilde{C}^{\varphi}_{ijk}=C^{\varphi}_{ijk}+W^{\varphi}_{tijk}f^t,
		\end{equation}
		where $C^{\varphi}_{ijk}$, $W^{\varphi}_{tijk}$ and $f_t$ are, respectively, the components of $C^{\varphi}$, $W^{\varphi}$ and $df$ in the coframe $\{\theta^i\}$. 
	\end{prop}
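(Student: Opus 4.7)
The plan is to compute $\widetilde{A}^{\varphi}_{ij,k}$, the covariant derivative of the $\varphi$-Schouten tensor in the $\widetilde{g}$-orthonormal coframe $\{\widetilde{\theta}^i\}$, starting from the transformation rule \eqref{conformal change metric schouten tensor local coordinates}, and then antisymmetrize in the last two indices $j,k$ to get $\widetilde{C}^{\varphi}_{ijk}$. More concretely, I would exterior-differentiate \eqref{conformal change metric schouten tensor local coordinates} and substitute the result into
\[
\widetilde{A}^{\varphi}_{ij,k}\widetilde{\theta}^k=d\widetilde{A}^{\varphi}_{ij}-\widetilde{A}^{\varphi}_{kj}\widetilde{\theta}^k_i-\widetilde{A}^{\varphi}_{ik}\widetilde{\theta}^k_j,
\]
using the transformation law \eqref{conn form change metric} for the Levi-Civita connection forms (with $h=f/(m-2)$) together with $\widetilde{\theta}^k=e^{-f/(m-2)}\theta^k$. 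Reading off the coefficient of $\theta^k$ yields an expression for $e^{-3f/(m-2)}\widetilde{A}^{\varphi}_{ij,k}$ as a sum of $A^{\varphi}_{ij,k}$, the third derivative $f_{ij,k}$, quadratic/cubic combinations of $f_p$ and $f_{pq}$, and mixed terms of the form $A^{\varphi}_{pq}f_r$.

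Subtracting the same expression with $j$ and $k$ swapped, the piece $A^{\varphi}_{ij,k}-A^{\varphi}_{ik,j}$ gives $C^{\varphi}_{ijk}$ by definition. The Hessian part $f_{ij,k}-f_{ik,j}$ is handled via the commutation rule \eqref{general commutation rule tensor field} applied to the $1$-form $df$: it produces exactly an $R^{t}{}_{ijk}f_t$ contribution, which is the source of the Riemann tensor in the target identity. The cubic-in-$f$ terms appearing after differentiating $f_if_j-\tfrac{|\nabla f|^2}{2}\eta_{ij}$ come as products $f_pf_{qr}$ which are manifestly symmetric in $q,r$ after contracting with $\eta$, so they all cancel under the antisymmetrization in $j,k$. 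The remaining mixed $A^{\varphi}\!\cdot\!f$ terms should reassemble, after using $A^{\varphi}_{pq}=A^{\varphi}_{qp}$, into the Kulkarni-Nomizu-type combination $-\frac{1}{m-2}(A^{\varphi}\circleland g)_{tijk}f^{t}$.

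Putting these pieces together, the Riemann contribution and the Kulkarni-Nomizu correction combine via the definition \eqref{def of phi Weyl} of the $\varphi$-Weyl tensor to give a single term $W^{\varphi}_{tijk}f^{t}$, yielding \eqref{conformal change metric cotton tensor}. The main obstacle will be the bookkeeping of a fairly large number of intermediate terms, in particular verifying that the signs and index placements of the mixed $A^{\varphi}\!\cdot\!f$ contributions match exactly the Kulkarni-Nomizu product $A^{\varphi}\circleland g$; this is essentially the same algebraic step that underlies the conformal invariance of the $(1,3)$ $\varphi$-Weyl, already encoded in \eqref{conf invariance phi weyl}. A useful sanity check at the end is that when $\varphi$ is constant the identity reduces to the classical conformal transformation law for the Cotton tensor (cf.\ the Riemannian computation in \cite{B}).
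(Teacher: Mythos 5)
Your proposal follows essentially the same route as the paper: differentiate the transformed $\varphi$-Schouten tensor via the structure equations and the transformed connection forms, antisymmetrize in $j,k$, use the commutation rule on the Hessian of $f$ to produce the Riemann term, and recombine the $A^{\varphi}\cdot f$ terms with it through the decomposition \eqref{def of phi Weyl}. The only caveat is that the quadratic terms $f_{pq}f_r$ do not all vanish by symmetry under the antisymmetrization alone — those coming from differentiating $df\otimes df-\tfrac{|\nabla f|^2}{2}g$ cancel against the ones produced by the Kulkarni--Nomizu-type contraction of the Hessian part with $f^t$ — but this is exactly the bookkeeping you flag, and it works out.
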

	\begin{proof}
		For simplicity of notation we set
		\begin{equation}\label{def di T in terms of A}
			T_{ij}:=A^{\varphi}_{ij}+f_{ij}+\frac{1}{m-2}\left(f_if_j-\frac{|\nabla f|^2}{2}\eta_{ij}\right),
		\end{equation}
		so that \eqref{conformal change metric schouten tensor local coordinates} reads
		\begin{equation}\label{phi schout with respect to T}
			\widetilde{A}^{\varphi}_{ij}=e^{\frac{2}{m-2}f}T_{ij}.
		\end{equation}
		We claim the validity of
		\begin{equation}\label{der cov A tilde in funz di T}
		e^{-\frac{3}{m-2}f}\widetilde{A}^{\varphi}_{ij,k}=\frac{2}{m-2}T_{ij}f_k+T_{ij,k}+\frac{1}{m-2}(T_{kj}f_i-T_{tj}f^t\eta_{ki}+T_{ik}f_j-T_{it}f^t\eta_{jk}).
		\end{equation}
		To prove the claim we use the definition of covariant derivative, \eqref{phi schout with respect to T} and \eqref{conn form change metric} (with $h$ given by \eqref{def f}), obtaining
		\begin{align*}
		\widetilde{A}^{\varphi}_{ij,k}\widetilde{\theta}^k=&d\widetilde{A}^{\varphi}_{ij}-\widetilde{A}^{\varphi}_{kj}\widetilde{\theta}^k_i-\widetilde{A}^{\varphi}_{ik}\widetilde{\theta}^k_j\\
		=&d(e^{\frac{2}{m-2}f}T_{ij})-e^{\frac{2}{m-2}f}T_{kj}\left(\theta^k_i-\frac{f_i}{m-2}\theta^k+\frac{f^k}{m-2}\theta_i\right)\\
		&-e^{\frac{2}{m-2}f}T_{ik}\left(\theta^k_j-\frac{f_j}{m-2}\theta^k+\frac{f^k}{m-2}\theta_j\right),
		\end{align*}
		that is,
		\begin{align*}
		e^{-\frac{2}{m-2}f}\widetilde{A}^{\varphi}_{ij,k}\widetilde{\theta}^k=&\frac{2}{m-2}T_{ij}df+(dT_{ij}-T_{kj}\theta^k_i-T_{ik}\theta^k_j)+\frac{1}{m-2}[T_{kj}(f_i\theta^k-f^k\theta_i)+T_{ik}(f_j\theta^k-f^k\theta_j)].
		\end{align*}
		Using \eqref{def tilde theta} and the definition of $T_{ij,k}$ the above yields
		\begin{align*}
		e^{-\frac{3}{m-2}f}\widetilde{A}^{\varphi}_{ij,k}\theta^k=&\left[\frac{2}{m-2}T_{ij}f_k+T_{ij,k}+\frac{1}{m-2}(T_{kj}f_i-T_{tj}f^t\eta_{ki}+T_{ik}f_j-T_{it}f^t\eta_{jk})\right]\theta^k,
		\end{align*}
		hence \eqref{der cov A tilde in funz di T} holds.
		
		Now, using the definition of the $\varphi$-Cotton tensor, \eqref{der cov A tilde in funz di T} twice and the symmetry of $T$ we get
		\begin{align*}
		e^{-\frac{3}{m-2}f}\widetilde{C}^{\varphi}_{ijk}=&e^{-\frac{3}{m-2}f}(\widetilde{A}^{\varphi}_{ij,k}-\widetilde{A}^{\varphi}_{ik,j})\\
		=&T_{ij,k}-T_{ik,j}+\frac{2}{m-2}(T_{ij}f_k-T_{ik}f_j)+\frac{1}{m-2}[T_{ik}f_j-T_{ij}f_k+(T_{tk}\eta_{ji}-T_{tj}\eta_{ki})f^t],
		\end{align*}
		that is,
		\begin{equation}\label{conf change cotton in terms of T}
		e^{-\frac{3}{m-2}f}\widetilde{C}^{\varphi}_{ijk}=T_{ij,k}-T_{ik,j}+\frac{1}{m-2}(T_{ij}\eta_{kt}-T_{ik}\eta_{jt}+T_{tk}\eta_{ji}-T_{tj}\eta_{ki})f^t.
		\end{equation}
		To express the right hand side of the above in terms of $C^{\varphi}$ we first observe that, from the definition \eqref{def di T in terms of A} of $T$,
		\begin{equation*}
		T_{ij,k}=A^{\varphi}_{ij,k}+f_{ijk}+\frac{1}{m-2}(f_{ik}f_j+f_if_{jk}-f_tf_{tk}\delta_{ij}),
		\end{equation*}
		so that, using the commutation rule (see \eqref{general commutation rule tensor field})
		\begin{equation*}
		f_{ijk}=f_{ikj}+R_{tijk}f^t,
		\end{equation*}
		and the definition of $C^{\varphi}_{ijk}$ we get
		\begin{align*}
		T_{ij,k}-T_{ik,j}=&A^{\varphi}_{ij,k}+f_{ijk}+\frac{1}{m-2}(f_{ik}f_j+f_if_{jk}-f^tf_{tk}\eta_{ij})\\
		&-\left[A^{\varphi}_{ik,j}+f_{ikj}+\frac{1}{m-2}(f_{ij}f_k+f_if_{kj}-f^tf_{tj}\eta_{ik})\right]\\
		=&C^{\varphi}_{ijk}+R^t_{ijk}f_t+\frac{1}{m-2}[f_{ik}f_j-f_{ij}f_k+f^t(f_{tj}\eta_{ik}-f_{tk}\eta_{ij})].
		\end{align*}
		Moreover an easy computation using \eqref{def di T in terms of A} shows that
		\begin{align*}
		(T_{ij}\eta_{kt}-T_{ik}\eta_{jt}+T_{tk}\eta_{ji}-T_{tj}\eta_{ki})f^t=&A^{\varphi}_{ij}f_k-A^{\varphi}_{ik}f_j+A^{\varphi}_{tk}f^t\eta_{ji}-A^{\varphi}_{tj}f^t\eta_{ki}\\
		&+f_{ij}f_k-f_{ik}f_j+f_{tk}f^t\eta_{ji}-f_{tj}f^t\eta_{ki}.
		\end{align*}
		Plugging the two relations above into \eqref{conf change cotton in terms of T} we finally conclude
		\begin{align*}
		e^{-\frac{3}{m-2}f}\widetilde{C}^{\varphi}_{ijk}=&C^{\varphi}_{ijk}+R_{tijk}f^t+\frac{1}{m-2}[f_{ik}f_j-f_{ij}f_k+f^t(f_{tj}\eta_{ik}-f_{tk}\eta_{ij})]\\
		&+\frac{1}{m-2}(A^{\varphi}_{ij}f_k-A^{\varphi}_{ik}f_j+A^{\varphi}_{tk}f^t\eta_{ji}-A^{\varphi}_{tj}f^t\eta_{ki})\\
		&+\frac{1}{m-2}(f_{ij}f_k-f_{ik}f_j+f_{tk}f^t\eta_{ji}-f_{tj}f^t\eta_{ki})\\
		=&C^{\varphi}_{ijk}+R^t_{ijk}f_t-\frac{1}{m-2}(A^{\varphi}_{tj}\eta_{ki}-A^{\varphi}_{tk}\eta_{ij}+A^{\varphi}_{ik}\eta_{tj}-A^{\varphi}_{ij}\eta_{tk})f^t.
		\end{align*}
		Thus follows \eqref{conformal change metric cotton tensor}, in view of the decomposition \eqref{def of phi Weyl}.
	\end{proof}
	
	The last transformation law we are going to illustrate is the one for the $\varphi$-Bach tensor $B^{\varphi}$ and is the hardest to obtain. In order to determine it we first need to evaluate the transformation law for the tensor
	\begin{equation}\label{def of V}
	V_{ij}:=\eta^{kt}C^{\varphi}_{ijk,t}-\alpha[(R^{\varphi})^k_j\varphi^a_k+\tau(\varphi)^a_{j}]\varphi^a_i,
	\end{equation}
	that is the content of
	\begin{lemma}
		In the above notations,
		\begin{equation}\label{conformal change metric divergence phi cotton}
		\begin{aligned}
		e^{-\frac{4}{m-2}f}\widetilde{V}_{ij}=&V_{ij}+f^{tk}W^{\varphi}_{tijk}-\frac{m-5}{m-2}f^tf^kW^{\varphi}_{tijk}+\frac{m-4}{m-2}(C^{\varphi}_{jki}+C^{\varphi}_{ikj})f^k+\alpha \varphi^a_{ij}\varphi^a_kf^k\\
		&+\frac{\alpha}{m-2}[(\varphi^a_kf^k-\tau(\varphi)^a)(\varphi^a_if_j+\varphi^a_jf_i)-\tau(\varphi)^a\varphi^a_kf^k\eta_{ij}-\Delta_ff\varphi^a_i\varphi^a_j].
		\end{aligned}
		\end{equation}
	\end{lemma}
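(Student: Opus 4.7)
I decompose $\widetilde{V}_{ij}$ along the three summands in \eqref{def of V}: the divergence of $\varphi$-Cotton $\eta^{kt}\widetilde{C}^{\varphi}_{ijk,t}$, the $\varphi$-Ricci coupling $-\alpha(\widetilde{R}^{\varphi})^k{}_j\widetilde{\varphi}^a_k\widetilde{\varphi}^a_i$, and the tension-derivative coupling $-\alpha\,\widetilde{\tau}(\varphi)^a_j\widetilde{\varphi}^a_i$. For each piece the relevant building-block conformal transformation law is already available: \eqref{conformal change metric cotton tensor} for $\widetilde{C}^{\varphi}$, \eqref{transf law ricci phi con f} for $\widetilde{\mbox{Ric}}^{\varphi}$, \eqref{coeff d phi conformal change metric} for $\widetilde{\varphi}^a_i$, and \eqref{conf chan third der of d phi con f} for $\widetilde{\tau}(\varphi)^a_k$. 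The task is to differentiate and trace these, then regroup, until the right-hand side of \eqref{conformal change metric divergence phi cotton} emerges.

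The technical core is the first summand. Using the connection-difference formula \eqref{conn form change metric} with $h=f/(m-2)$, one derives a clean formula expressing $\widetilde{\nabla}$ applied to a $(0,3)$-tensor as $\nabla$ plus linear-in-$df$ corrections. Applying it to the right-hand side of \eqref{conformal change metric cotton tensor}, expanding the Leibniz derivative of $W^{\varphi}_{tijk}f^t$, and finally contracting with $\eta^{kt}$ produces four groups of terms: (a) $\eta^{kt}C^{\varphi}_{ijk,t}$, the original divergence of Cotton, which reappears unchanged and supplies the leading piece of $V_{ij}$; (b) a $\nabla\cdot W^{\varphi}$-piece, to be reduced through \eqref{div di phi Weyl}, which furnishes exactly the structural $\frac{m-3}{m-2}$-Cotton term, the $\alpha\,\varphi^a_{ij}\varphi^a_kf^k$ term, and the $\alpha\tau(\varphi)^a\eta_{ij}$-type terms visible in the claim; (c) a $W^{\varphi}\cdot\mbox{Hess}(f)$-piece that produces $f^{tk}W^{\varphi}_{tijk}$; and (d) a host of quadratic-in-$df$ corrections coupling $df$ to $C^{\varphi}$, to $W^{\varphi}$, and to $df\otimes df$. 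The Bianchi identity \eqref{bianchi id phi cotton} and the traces \eqref{traccia phi cotton}, \eqref{traccia phi weyl} collapse many of these contractions, converting three-index Cotton traces into divergences of the stress-energy tensor via \eqref{stress energy tensor}.

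For the remaining two summands I proceed by direct substitution. Raising an index on \eqref{transf law ricci phi con f} with $\widetilde{g}^{ks}=e^{2f/(m-2)}\eta^{ks}$ and multiplying by $\widetilde{\varphi}^a_k\widetilde{\varphi}^a_i=e^{2f/(m-2)}\varphi^a_k\varphi^a_i$ from \eqref{coeff d phi conformal change metric} yields $e^{4f/(m-2)}$ times $(R^{\varphi})^k{}_j\varphi^a_k\varphi^a_i$ together with explicit $f_{jk}\varphi^a_k\varphi^a_i$, $f_jf_k\varphi^a_k\varphi^a_i$ and $\Delta_ff\cdot\varphi^a_j\varphi^a_i/(m-2)$ contributions. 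Analogously \eqref{conf chan third der of d phi con f} expresses $\widetilde{\tau}(\varphi)^a_j\widetilde{\varphi}^a_i$ up to the conformal weight $e^{4f/(m-2)}$ plus correction terms involving $\varphi^a_{ij}f^i$, $\varphi^a_if^i_j$ and $\tau(\varphi)^af_j$; it is precisely these cross-terms that must cancel, via \eqref{div of phi cotton}, against residual $\alpha$-terms left over from group (d) of the Cotton-divergence expansion.

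\textbf{Expected main obstacle.} The difficulty is bookkeeping. After all expansions one finds dozens of scalar-invariant monomials in $\mbox{Hess}(f)$, $df\otimes df$, $\varphi^a_i$ and curvatures, which must collapse---through \eqref{bianchi id phi cotton}, \eqref{traccia phi cotton}, \eqref{traccia phi weyl}, \eqref{div di phi Weyl} and (for the closing cancellation between the divergence of Cotton and $\widetilde{\tau}(\varphi)^a_j\widetilde{\varphi}^a_i$) the identity \eqref{div of phi cotton}---into exactly the five groups on the right-hand side of \eqref{conformal change metric divergence phi cotton}. The most delicate coefficient to verify is the $-\tfrac{m-5}{m-2}$ weighting of $f^tf^kW^{\varphi}_{tijk}$, which arises from the competition between the Leibniz derivative of $W^{\varphi}_{tijk}f^t$ and the quadratic-in-$df$ corrections produced by the $\widetilde{\nabla}$-to-$\nabla$ conversion.
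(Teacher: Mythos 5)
Your proposal follows essentially the same route as the paper: the paper likewise sets $T_{ijk}:=C^{\varphi}_{ijk}+f^tW^{\varphi}_{tijk}$ from \eqref{conformal change metric cotton tensor}, computes $\eta^{sk}\widetilde{C}^{\varphi}_{ijk,s}$ by the same $\widetilde{\nabla}$-to-$\nabla$ conversion and collapses the resulting terms via \eqref{div di phi Weyl}, \eqref{bianchi id phi cotton}, \eqref{traccia phi cotton} and \eqref{traccia phi weyl}, then handles the remaining two summands of \eqref{def of V} by direct substitution of \eqref{transf law ricci phi con f} and the transformation law for the derivative of the tension. The only minor discrepancy is that the paper's computation closes without invoking \eqref{div of phi cotton}: the residual $\alpha$-terms cancel directly once the three pieces are combined.
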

	\begin{proof}
		We set
		\begin{equation*}
		T_{ijk}=C^{\varphi}_{ijk}+f^tW^{\varphi}_{tijk},
		\end{equation*}
		so that, from \eqref{conformal change metric cotton tensor},
		\begin{equation*}
			\widetilde{C}^{\varphi}_{ijk}=e^{\frac{3}{m-2}f}T_{ijk}.
		\end{equation*}		
		Proceeding exactly as in the proof of the Proposition above we get
		\begin{align*}
		e^{-\frac{4}{m-2}f}\widetilde{C}^{\varphi}_{ijk,s}=&T_{ijk,s}+\frac{3}{m-2}T_{ijk}f_s+\frac{1}{m-2}(f_iT_{sjk}+f_jT_{isk}+f_kT_{ijs})\\
		&-\frac{f^t}{m-2}(T_{tjk}\eta_{is}+T_{itk}\eta_{js}+T_{ijt}\eta_{ks}).
		\end{align*}
		Applying $\eta^{sk}$ to the above an easy calculation shows that
		\begin{equation}\label{in proof conformal change divergence of C}
		e^{-\frac{4}{m-2}f}\eta^{sk}\widetilde{C}^{\varphi}_{ijk,s}=\eta^{sk}T_{ijk,s}-\frac{m-4}{m-2}T_{ijk}f^k+\frac{1}{m-2}(\eta^{sk}T_{kjs}f_i+\eta^{sk}T_{iks}f_j)-\frac{f^k}{m-2}(T_{kji}+T_{ikj}).
		\end{equation}
		Using the definition of $T_{ijk}$ and by plugging \eqref{div di phi Weyl} into the above we infer
		\begin{equation}\label{in proof first item divergence cotton under conformal change metric}
		\begin{aligned}
		\eta^{sk}T_{ijk,s}=&\eta^{sk}C^{\varphi}_{ijk,s}+f^{tk}W^{\varphi}_{tijk}+\frac{m-3}{m-2}C^{\varphi}_{jki}f^k+\alpha(\varphi^a_{ji}\varphi^a_kf^k-\varphi^a_{jk}f^k\varphi^a_i)\\
		&+\frac{\alpha}{m-2}\tau(\varphi)^a(\varphi^a_if_j-\varphi^a_kf^k\eta_{ij}).
		\end{aligned}
		\end{equation}
		Clearly
		\begin{equation}\label{in proof second item divergence cotton under conformal change metric}
		T_{ijk}f^k=C^{\varphi}_{ijk}f^k+f^tf^kW^{\varphi}_{tijk}.
		\end{equation}
		The traces of $T$ are given by, using \eqref{traccia phi cotton}, \eqref{traccia phi weyl} and the symmetries of tensors involved,
		\begin{equation*}
		\eta^{ks}T_{kjs}=\eta^{ks}C^{\varphi}_{kjs}+f^t\eta^{ks}W^{\varphi}_{tkjs}=-\alpha\tau(\varphi)^a\varphi^a_j+\alpha\varphi^a_t\varphi^a_jf^t=\alpha(\varphi^a_kf^k-\tau(\varphi)^a)\varphi^a_j
		\end{equation*}
		and
		\begin{equation*}
		\eta^{ks}T_{iks}=0,
		\end{equation*}
		then we easily get
		\begin{equation}\label{in proof third item divergence cotton under conformal change metric}
		\eta^{ks}T_{kjs}f_i+\eta^{ks}T_{iks}f_j=\alpha(\varphi^a_kf^k-\tau(\varphi)^a)\varphi^a_jf_i.
		\end{equation}
		Using once again the definition of $T$, the skew symmetry in the first two indices of $W^{\varphi}$ and the identity \eqref{bianchi id phi cotton} for $C^{\varphi}$ we evaluate
		\begin{align*}
		f^kT_{kji}+f^kT_{ikj}=&f^k(C^{\varphi}_{kji}+W^{\varphi}_{tkji}f^t)+f^k(C^{\varphi}_{ikj}+W^{\varphi}_{tikj}f^t)\\
		=&f^k(C^{\varphi}_{kji}+C^{\varphi}_{ikj})+f^tf^kW^{\varphi}_{tikj}\\
		=&-f^kC^{\varphi}_{jik}+f^tf^kW^{\varphi}_{tikj}\\
		=&f^kC^{\varphi}_{jki}+f^tf^kW^{\varphi}_{tikj}.
		\end{align*}
		Plugging the above together with \eqref{in proof first item divergence cotton under conformal change metric}, \eqref{in proof second item divergence cotton under conformal change metric} and \eqref{in proof third item divergence cotton under conformal change metric} in \eqref{in proof conformal change divergence of C} we finally get
		\begin{align*}
		e^{-\frac{4}{m-2}f}\eta^{ks}\widetilde{C}^{\varphi}_{ijk,s}=&\eta^{ks}C^{\varphi}_{ijk,s}+f^{tk}W^{\varphi}_{tijk}-\frac{m-5}{m-2}f^tf^kW^{\varphi}_{tijk}\\
		&+\frac{m-4}{m-2}(C^{\varphi}_{jki}+C^{\varphi}_{ikj})f^k+\alpha(\varphi^a_{ij}\varphi^a_kf^k-\varphi^a_{jk}f^k\varphi^a_i)\\
		&+\frac{\alpha}{m-2}[\tau(\varphi)^a(\varphi^a_if_j-\varphi^a_jf_i)+\varphi^a_kf^k\varphi^a_jf_i-\tau(\varphi)^a\varphi^a_kf^k\eta_{ij}].
		\end{align*}
		To conclude the proof notice that, with the aid of \eqref{transf law ricci phi con f} and \eqref{tension phi conf change metric con f}, we get
		\begin{align*}
		e^{-\frac{4}{m-2}f}((\widetilde{R}^{\varphi})^k_j\widetilde{\varphi}^a_k\widetilde{\varphi}^a_i+\widetilde{\tau}(\varphi)^a_j\widetilde{\varphi}^a_i)=&(R^{\varphi})^k_{j}\varphi^a_k\varphi^a_i+\tau(\varphi)^a_{j}\varphi^a_i-\varphi^a_{kj}f^k\varphi^a_i\\
		&+\frac{1}{m-2}(\Delta_ff\varphi^a_i\varphi^a_j-\varphi^a_kf^k\varphi^a_if_j+2\tau(\varphi)^a\varphi^a_if_j).
		\end{align*}
		Inserting the relations obtained so far into the definition \eqref{def of V} we obtain the validity of \eqref{conformal change metric divergence phi cotton}.
	\end{proof}
	Now we are finally ready to prove
	\begin{thm}
		In the above notations, the components of the $\varphi$-Bach tensor of $(M,\widetilde{g})$ in the local coframe $\{\widetilde{\theta}^i\}$ are given by
		\begin{equation}\label{conformal change metric phi bach tensor}
		e^{-\frac{4}{m-2}f}(m-2)\widetilde{B}^{\varphi}_{ij}=(m-2)B^{\varphi}_{ij}-\frac{m-4}{m-2}f^k(C^{\varphi}_{ijk}+f^tW^{\varphi}_{tijk}-C^{\varphi}_{jki}).
		\end{equation}
	\end{thm}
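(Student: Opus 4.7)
The natural approach is to leverage the work already done: the bulk of $(m-2)B^{\varphi}_{ij}$ coincides with the tensor $V_{ij}$ defined in \eqref{def of V}, whose transformation law is given by \eqref{conformal change metric divergence phi cotton}. Indeed, unpacking \eqref{def phi bach} and using $(R^{\varphi})^{tk}\eta_{jk}=(R^{\varphi})^t_j$, one sees that
\begin{equation*}
(m-2)B^{\varphi}_{ij}=V_{ij}+(R^{\varphi})^{tk}W^{\varphi}_{tikj}+\alpha\varphi^a_{ij}\tau(\varphi)^a-\frac{\alpha}{m-2}|\tau(\varphi)|^2\eta_{ij}.
\end{equation*}
Thus the plan is to compute the conformal behaviour of the three additional terms and combine them with \eqref{conformal change metric divergence phi cotton}.

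For the Ricci-Weyl term I would use \eqref{transf law ricci phi con f} (with indices raised via $\eta$, which is legitimate in an orthonormal coframe) together with the conformal invariance of the $(1,3)$-version of $\varphi$-Weyl encoded in \eqref{conf invariance phi weyl}. This gives an expansion of $(\widetilde R^{\varphi})^{tk}\widetilde W^{\varphi}_{tikj}$ as $e^{4f/(m-2)}$ times $(R^{\varphi})^{tk}W^{\varphi}_{tikj}+f^{tk}W^{\varphi}_{tikj}+\frac{1}{m-2}(f^tf^k+\Delta_f f\,\eta^{tk})W^{\varphi}_{tikj}$, where the trace $\eta^{tk}W^{\varphi}_{tikj}$ is $\alpha\varphi^a_i\varphi^a_j$ by \eqref{traccia phi weyl}. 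For the $\varphi^a_{ij}\tau(\varphi)^a$ term I combine \eqref{coeff nabla d phi conf change metric} and \eqref{tension phi conf change metric con f}, obtaining $\alpha e^{4f/(m-2)}\bigl[\varphi^a_{ij}\tau(\varphi)^a-(m-2)\varphi^a_{ij}\varphi^a_kf^k+(\tau(\varphi)^a-(m-2)\varphi^a_kf^k)(\varphi^a_if_j+\varphi^a_jf_i-\varphi^a_tf^t\eta_{ij})\bigr]$. The $|\tau(\varphi)|^2$ piece transforms simply through \eqref{tension phi conf change metric con f}.

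At this stage I add the four transformation laws and match them against $(m-2)B^{\varphi}_{ij}+(\text{correction})$. The bookkeeping is the real work: the extra pieces involving $f^{tk}W^{\varphi}_{tikj}$ from the Ricci-Weyl term and from \eqref{conformal change metric divergence phi cotton} must cancel; the numerous terms of the form $\varphi^a_if_j\tau(\varphi)^a$, $\varphi^a_i\varphi^a_jf^tf_t$, $\Delta_f f\,\varphi^a_i\varphi^a_j$, and $\varphi^a_{ij}\varphi^a_kf^k$ collapse when one groups them correctly. To produce the final $f^k$-coefficients one identifies $f^tf^k W^{\varphi}_{tikj}$ in both contributions and uses the Riemann-type symmetries $W^{\varphi}_{tikj}=-W^{\varphi}_{itkj}$ and $W^{\varphi}_{tikj}=W^{\varphi}_{kjti}$; to get the $C^{\varphi}_{ijk}$ and $C^{\varphi}_{jki}$ terms in the desired shape one invokes the Bianchi identity \eqref{bianchi id phi cotton} on the combination $C^{\varphi}_{jki}+C^{\varphi}_{ikj}$ coming from \eqref{conformal change metric divergence phi cotton}.

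The main obstacle is precisely this cancellation step: the $\alpha$-dependent pieces generated by the Ricci-Weyl and $\varphi^a_{ij}\tau(\varphi)^a$ terms must conspire, via \eqref{traccia phi weyl} and the target-space contractions, to wipe out every mixed term except those packaged as $\frac{m-4}{m-2}f^k\bigl(C^{\varphi}_{ijk}+f^tW^{\varphi}_{tijk}-C^{\varphi}_{jki}\bigr)$. The cleanest checkpoint along the way is the case $m=4$: there the coefficient $(m-4)/(m-2)$ vanishes and the calculation should reduce to pure conformal invariance $e^{-2f}\widetilde B^{\varphi}_{ij}=B^{\varphi}_{ij}$, providing a strong sanity check for the signs and for the grouping of the numerous $f$-dependent remainders.
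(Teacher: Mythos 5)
Your proposal follows essentially the same route as the paper: the same decomposition $(m-2)B^{\varphi}_{ij}=V_{ij}+W^{\varphi}_{tikj}(R^{\varphi})^{tk}+\alpha\tau(\varphi)^a\left(\varphi^a_{ij}-\frac{1}{m-2}\tau(\varphi)^a\eta_{ij}\right)$, the same appeal to the lemma giving \eqref{conformal change metric divergence phi cotton} for $V_{ij}$, and the same treatment of the two remaining terms via \eqref{transf law ricci phi con f}, \eqref{conf invariance phi weyl}, \eqref{traccia phi weyl} and \eqref{coeff nabla d phi conf change metric}. Only watch the normalization in your expansion of $\widetilde{\varphi}^a_{ij}\widetilde{\tau}(\varphi)^a$: with $h=\frac{f}{m-2}$ the factors $(m-2)$ you wrote should become $1$ and the product of the gradient brackets acquires an overall $\frac{1}{m-2}$, exactly as in the paper's displayed computation.
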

	\begin{proof}
		From the definition of $\varphi$-Bach \eqref{def phi bach} and \eqref{def of V}
		\begin{equation}\label{form per phi bach usando V}
		(m-2)B^{\varphi}_{ij}=V_{ij}+W^{\varphi}_{tikj}(R^{\varphi})^{tk}+\alpha\tau(\varphi)^a\left(\varphi^a_{ij}-\frac{1}{m-2}\tau(\varphi)^a\eta_{ij}\right)
		\end{equation}
		Using \eqref{conformal change metric ricci in local coordinates}, the conformal invariance of $\varphi$-Weyl \eqref{conf invariance phi weyl} and \eqref{traccia phi weyl} we infer
		\begin{align*}
		e^{-\frac{4}{m-2}f}\widetilde{W}^{\varphi}_{tikj}(\widetilde{R}^{\varphi})^{tk}=&W^{\varphi}_{tikj}\left((R^{\varphi})^{tk}+f^{tk}+\frac{f^tf^k}{m-2}+\frac{\Delta_ff}{m-2}\eta^{tk}\right)\\
		=&W^{\varphi}_{tikj}(R^{\varphi})^{tk}+W^{\varphi}_{tikj}f^{tk}+\frac{1}{m-2}W^{\varphi}_{tikj}f^tf^k+\alpha\frac{\Delta_ff}{m-2}\varphi^a_i\varphi^a_j\\
		=&W^{\varphi}_{tikj}(R^{\varphi})^{tk}-W^{\varphi}_{tijk}f^{tk}-\frac{1}{m-2}W^{\varphi}_{tijk}f^tf^k+\alpha\frac{\Delta_ff}{m-2}\varphi^a_i\varphi^a_j.
		\end{align*}
		Using \eqref{coeff nabla d phi conf change metric} three times a computation yields
		\begin{align*}
		e^{-\frac{4}{m-2}f}\widetilde{\tau}(\varphi)^a\left(\widetilde{\varphi}^a_{ij}-\frac{1}{m-2}\widetilde{\tau}(\varphi)^a\eta_{ij}\right)=&\tau(\varphi)^a\left(\varphi^a_{ij}-\frac{1}{m-2}\tau(\varphi)^a\eta_{ij}\right)\\
		&+\frac{1}{m-2}\tau(\varphi)^a\varphi^a_kf^k\eta_{ij}-\varphi^a_kf^k\varphi^a_{ij}\\
		&+\frac{1}{m-2}(\tau(\varphi)^a-\varphi^a_kf^k)(\varphi^a_if_j+\varphi^a_jf_i).
		\end{align*}
		Combining these two relations with \eqref{conformal change metric divergence phi cotton}, from \eqref{form per phi bach usando V} we deduce the validity of \eqref{conformal change metric phi bach tensor}.
	\end{proof}
	As an immediate consequence of the transformation law for $\varphi$-Bach we generalize the conformal invariance in the four dimensional case of the Bach tensor in the following.
	\begin{cor}\label{cor phi bach invariante conforme per m uguale a 4}
		Let $(M,g)$ be a four dimensional pseudo-Riemannian manifold, $\varphi:M\to N$ a smooth map, where $(N,\eta)$ is a target Riemannian manifold, and $\alpha\in\mathbb{R}\setminus \{0\}$. Then the $\varphi$-Bach tensor is a conformal invariant tensor.
	\end{cor}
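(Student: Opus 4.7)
The plan is immediate, relying entirely on the transformation formula \eqref{conformal change metric phi bach tensor} established in the preceding theorem. I would substitute $m=4$ in that identity and observe that the prefactor $(m-4)/(m-2)$ multiplying the whole correction term on the right-hand side vanishes identically. Consequently, all contributions involving $f^{k}C^{\varphi}_{ijk}$, $f^k f^t W^{\varphi}_{tijk}$ and $f^k C^{\varphi}_{jki}$ drop out, and the transformation law collapses to
\begin{equation*}
e^{-2f}\widetilde{B}^{\varphi}_{ij}=B^{\varphi}_{ij}
\end{equation*}
in any local $g$-orthonormal coframe $\{\theta^i\}$, where $\widetilde{g}=e^{-f}g$ (recall that in dimension four one has $h=f/(m-2)=f/2$).

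To promote this component identity to the claimed invariance, I would pass from moving-frame components to global tensors. Using $\widetilde{\theta}^i=e^{-f/2}\theta^i$, the $\widetilde{g}$-orthonormal expansion $\widetilde{B}^{\varphi}=\widetilde{B}^{\varphi}_{ij}\widetilde{\theta}^i\otimes\widetilde{\theta}^j$ becomes $\widetilde{B}^{\varphi}=e^{-f}\widetilde{B}^{\varphi}_{ij}\theta^i\otimes\theta^j=e^{f}B^{\varphi}$, exhibiting exactly the conformal scaling expected of a weight-preserving conformally invariant $(0,2)$-tensor in dimension four, in perfect analogy with the classical Bach tensor recovered when $\varphi$ is constant.

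There is no real obstacle at this point: the only delicate structural fact, namely that the $(m-4)$ prefactor cleanly absorbs all nontrivial conformal corrections coming from the $\varphi$-Cotton, $\varphi$-Weyl and $\varphi$-tension contributions, has already been extracted in the proof of the transformation law. The corollary is thus essentially the statement that dimension four is the critical dimension making the $\varphi$-Bach tensor conformally natural, and I expect the author's proof to amount to the one-line substitution $m=4$ into \eqref{conformal change metric phi bach tensor}.
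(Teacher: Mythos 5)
Your proposal is correct and coincides with the paper's argument: the corollary is obtained precisely by setting $m=4$ in the transformation law \eqref{conformal change metric phi bach tensor}, where the prefactor $\frac{m-4}{m-2}$ kills all the correction terms and leaves $e^{-2f}\widetilde{B}^{\varphi}_{ij}=B^{\varphi}_{ij}$. Your additional remark translating this orthonormal-frame identity into the global scaling $\widetilde{B}^{\varphi}=e^{f}B^{\varphi}$ is a correct (and slightly more explicit) rendering of what the paper leaves implicit in calling $B^{\varphi}$ a conformally invariant tensor.
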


	\begin{dfn}
		Let $(M,g)$ be a semi-Riemannian manifold of dimension $m\geq 3$, $\varphi:M\to N$ a smooth map, where $(N,\eta)$ is a target Riemannian manifold, and $\alpha\in\mathbb{R}\setminus \{0\}$. We denote
		\begin{equation*}
		[g]:=\{v^2g: v\in\mathcal{C}^{\infty}(M), v>0 \mbox{ on } M\}
		\end{equation*}
		and we say that $(M,g)$ is {\em conformally harmonic-Einstein} (with respect to $\varphi$ and $\alpha$) if there exists $\widetilde{g}\in [g]$ such that $(M,\widetilde{g})$ is harmonic-Einstein (with respect to $\varphi$ and $\alpha$). We say that $(M,g)$ is {\em conformally $\varphi$-Ricci flat} (with respect to $\alpha$) if there exists $\widetilde{g}\in [g]$ such that $(M,\widetilde{g})$ is $\varphi$-Ricci flat (with respect to $\alpha$).
	\end{dfn}
	\begin{rmk}
		Four dimensional conformally harmonic-Einstein manifolds are $\varphi$-Bach flat, this is due to the conformal invariance of $\varphi$-Bach for four dimensional semi-Riemannian manifolds and the trivial fact that harmonic-Einstein manifolds are $\varphi$-Bach flat, see \hyperref[rmk phi curv of harm einst]{Remark \ref*{rmk phi curv of harm einst}}.
	\end{rmk}
	\begin{rmk}
		Let $(M,g)$ be a semi-Riemannian manifold of dimension $m\geq 3$, $\varphi:M\to N$ a smooth map, where $(N,\eta)$ is a target Riemannian manifold, and $\alpha\in\mathbb{R}\setminus \{0\}$. Using \eqref{transf law ricci phi con f}, \eqref{transf law phi scalar con f} and \eqref{tension phi conf change metric con f} it is immediate to realize that $(M,g)$ is conformally harmonic-Einstein (with respect to $\varphi$ and $\alpha$) if and only if
		\begin{equation*}
		\begin{dcases}
		\left(\mbox{Ric}^{\varphi}+\mbox{Hess}(f)+\frac{1}{m-2}df\otimes df\right)^{\circ}=0\\
		\tau(\varphi)=d\varphi(\nabla f),
		\end{dcases}
		\end{equation*}
		for some $f\in\mathcal{C}^{\infty}(M)$. Indeed the second equation of the above is clearly equivalent to $\widetilde{\tau}(\varphi)=0$, using \eqref{tension phi conf change metric con f}, while the first equation of can be rewritten as
		\begin{equation*}
		\mbox{Ric}^{\varphi}+\mbox{Hess}(f)+\frac{1}{m-2}df\otimes df=\frac{1}{m}\left(S^{\varphi}+\Delta f+\frac{1}{m-2}|\nabla f|^2\right)g,
		\end{equation*}
		or also as
		\begin{equation*}
		\mbox{Ric}^{\varphi}+\mbox{Hess}(f)+\frac{1}{m-2}(df\otimes df+\Delta_ff g)=\frac{1}{m}\left(S^{\varphi}+\frac{2(m-1)}{m-2}\Delta f-\frac{m-1}{m-2}|\nabla f|^2\right)g.
		\end{equation*}
		Now the equivalence between the above and
		\begin{equation*}
		\widetilde{\mbox{Ric}}^{\varphi}=\frac{\widetilde{S}^{\varphi}}{m}\widetilde{g}
		\end{equation*}
		is evident, using \eqref{transf law ricci phi con f}, \eqref{transf law phi scalar con f}.
	\end{rmk}
	
	\section{Harmonic-Einstein warped products}\label{section Warped products}
	
	Let $(M,g)$ and $(F,g_F)$ be two semi-Riemannian manifolds of dimension $m$ and $d$ respectively. Let $u\in\mathcal{C}^{\infty}(M)$, $u>0$ on $M$.
	\begin{dfn}
		We denote by $\bar{M}=M\times F$ the product manifold, by
		\begin{equation*}
		\overline{g}:=\pi_M^*g+(u\circ \pi_M)^2\pi_F^* g_F,
		\end{equation*}
		where $\pi_M:\bar{M}\to M$ and $\pi_F:\bar{M}\to F$ are the canonical projections, and by
		\begin{equation*}
		M\times_u F:=(\bar{M},\overline{g})
		\end{equation*}
		the {\em semi-Riemannian warped product} with {\em base} $(M,g)$, {\em fibre} $(F,g_F)$ and {\em warping function} $u$.
	\end{dfn}
	We are going to identify $T\bar{M}$ with $TM\oplus TF$. Via the identification
	\begin{equation*}
	\overline{g}\equiv g+u^2g_F.
	\end{equation*}
	We use the following indexes conventions
	\begin{equation*}
	1\leq i,j,\ldots \leq m, \quad 1\leq \alpha,\beta,\ldots \leq d, \quad 1\leq A,B,\ldots \leq m+d.
	\end{equation*}
	
	Let $\{e_i\}$, $\{\theta^i\}$, $\{\theta^i_j\}$, $\{\Theta^i_j\}$ be, respectively, a local $g$-orthonormal frame, the dual coframe, the relative connection and curvature forms on an open subset $\mathcal{U}$ of $M$ and let $\{\varepsilon_\alpha\}$, $\{\psi^{\alpha}\}$, $\{\psi^{\alpha}_{\beta}\}$, $\{\Psi^{\alpha}_{\beta}\}$ be the same quantities on an open subset $\mathcal{W}$ of $F$ with respect to $g_F$.
	
	In the next well known Proposition we determine the local $\bar{g}$-orthonormal frame, the dual coframe, the relative connection and curvature forms on $\overline{\mathcal{U}}:=\mathcal{U}\times \mathcal{W}$ induced by the choices above, that we denote by $\{\overline{e}_A\}$, $\{\overline{\theta}^A\}$, $\{\overline{\theta}^A_B\}$, $\{\overline{\Theta}^A_B\}$, respectively. We have
	\begin{equation*}
	g=\eta_{ij}\theta^i\otimes \theta^j, \quad g_F={}^F\eta_{\alpha \beta}\psi^{\alpha}\otimes \psi^{\beta},
	\end{equation*}
	where $\eta_{ij}$ and ${}^F\eta_{\alpha\beta}$ and defined as in \eqref{def eta i j}, according to the signature of $g$ and $g_F$, respectively.
	\begin{prop}
		In the notations above
		\begin{itemize}
			\item The local $g$-orthonormal frame $\{\overline{e}_A\}$ is given by
			\begin{equation}\label{orth frame warp prod}
			\overline{e}_i=\pi_M^*(e_i)\equiv e_i, \quad \overline{e}_{m+\alpha}=\frac{1}{u\circ \pi_M}\pi_F^*(\varepsilon_{\alpha})\equiv\frac{1}{u}\varepsilon_{\alpha}.
			\end{equation}
			\item The corresponding dual coframe $\{\overline{\theta}^A\}$ is given by
			\begin{equation}\label{orth coframe warp prod}
			\overline{\theta}^i=\pi_M^*(\theta^i)\equiv\theta^i, \quad \overline{\theta}^{m+\alpha}=u\circ \pi_M\cdot\pi_F^*\psi^{\alpha}\equiv u\psi^{\alpha}.
			\end{equation}
			Then
			\begin{equation*}
			\bar{g}=\bar{\eta}_{AB}\overline{\theta}^A\otimes \overline{\theta}^B,
			\end{equation*}
			where
			\begin{equation*}
			\bar{\eta}_{ij}=\eta_{ij}, \quad \bar{\eta}_{i\, m+\beta}=0=\bar{\eta}_{m+\alpha \,j}, \quad \bar{\eta}_{m+\alpha\, m+\beta}={}^F\eta_{\alpha \beta}.
			\end{equation*}
			\item The Levi-Civita connection forms $\{\overline{\theta}^A_B\}$ are given by
			\begin{equation}\label{conn form warp prod}
			\overline{\theta}^i_j=\theta^i_j, \quad \overline{\theta}^{m+\alpha}_{m+\beta}=\psi^{\alpha}_{\beta}, \quad \overline{\theta}^{m+\alpha}_i=u_i\psi^{\alpha}, \quad \overline{\theta}^i_{m+\alpha}=-u^i\psi_{\alpha}.
			\end{equation}
			\item The curvature forms $\{\overline{\Theta}^A_B\}$ are given by
			\begin{equation}\label{curv form warp prod}
			\overline{\Theta}^j_i=\Theta^j_i, \quad \overline{\Theta}^{m+\beta}_{m+\alpha}=\Psi^{\beta}_{\alpha}+|\nabla u|^2\psi_{\alpha}\wedge \psi^{\beta}, \quad \overline{\Theta}^{m+\alpha}_i=u_{ij}\theta^j\wedge\psi^{\alpha}, \quad \overline{\Theta}^i_{m+\alpha}=-u^{ij}\theta_j\land \psi_{\alpha}.
			\end{equation}
			Then the non-vanishing components of $\overline{\mbox{Riem}}$ are determined by
			\begin{equation}\label{comp riem warp prod}
			\begin{aligned}
			&\bar{R}_{ijkt}=R_{ijkt}, \quad \bar{R}_{i\, m+\alpha\, j\, m+\beta}=-\frac{u_{ij}}{u}{}^F\eta_{\alpha\beta},\\
			&\bar{R}_{m+\alpha\, m+\beta\, m+\gamma\, m+\delta}=\frac{1}{u^2}{}^FR_{\alpha\beta\gamma\delta}-\frac{|\nabla u|^2}{u^2}({}^F\eta_{\alpha \gamma}{}^F\eta_{\beta\delta}-{}^F\eta_{\alpha\delta}{}^F\eta_{\beta\gamma}),
			\end{aligned}
			\end{equation}
			where $R_{ijkt}$ and ${}^FR_{\alpha\beta\gamma\delta}$ are the components of the Riemann tensors of $(M,g)$ and $(F,g_F)$, respectively.
		\end{itemize}
	\end{prop}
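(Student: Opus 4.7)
The plan is to verify the four items in turn, using the standard characterization of the Levi-Civita connection forms by the first structure equation together with skew-symmetry, and then the second structure equation for the curvature forms. Everything amounts to Cartan-calculus bookkeeping on $\bar M=M\times F$; the non-trivial content is making sure the cross-terms coming from the warping function $u$ land in the right places.

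First I would check the first two items. Since $\bar g=g+u^2g_F$ respects the splitting $T\bar M\cong TM\oplus TF$, a direct computation gives $\bar g(e_i,e_j)=\eta_{ij}$, $\bar g(e_i,u^{-1}\varepsilon_\alpha)=0$ and $\bar g(u^{-1}\varepsilon_\alpha,u^{-1}\varepsilon_\beta)={}^F\eta_{\alpha\beta}$, so \eqref{orth frame warp prod} defines a local $\bar g$-orthonormal frame. The duality relations $\bar\theta^A(\bar e_B)=\delta^A_B$ with \eqref{orth coframe warp prod} are immediate, giving the second item. Writing $\bar g$ in this coframe yields the block-diagonal $\bar\eta_{AB}$ displayed in the statement.

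For the connection forms I would use uniqueness: the $\bar\theta^A_B$ are determined by the first structure equation $d\bar\theta^A+\bar\theta^A_B\wedge\bar\theta^B=0$ together with skew-symmetry $\bar\theta_{AB}+\bar\theta_{BA}=0$. I would therefore take the forms defined by \eqref{conn form warp prod} as an ansatz and check both properties. For skew-symmetry the two diagonal blocks reduce to the skew-symmetry of $\{\theta^i_j\}$ and $\{\psi^\alpha_\beta\}$ on $(M,g)$ and $(F,g_F)$ respectively, while the off-diagonal check uses $\bar\eta_{m+\alpha\,m+\beta}u_i\psi^\beta=u_i\psi_\alpha$ and $\eta_{ij}(-u^j\psi_\alpha)=-u_i\psi_\alpha$, which cancel. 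For the first structure equation I would split into $A=i$ and $A=m+\alpha$. The $A=i$ equation reduces, after using $d\theta^i+\theta^i_j\wedge\theta^j=0$ on $M$, to the statement $-u^i\psi_\alpha\wedge u\psi^\alpha=0$, which holds because $\psi_\alpha\wedge\psi^\alpha=0$. The $A=m+\alpha$ equation becomes $d(u\psi^\alpha)=u_i\theta^i\wedge\psi^\alpha-u\psi^\alpha_\beta\wedge\psi^\beta$, which exactly matches $-u_i\psi^\alpha\wedge\theta^i-\psi^\alpha_\beta\wedge u\psi^\beta$ after using the first structure equations on $F$.

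For the curvature forms I would plug the ansatz \eqref{curv form warp prod} into the second structure equation $\bar\Theta^A_B=d\bar\theta^A_B+\bar\theta_{CB}\wedge\bar\theta^{CA}$. The purely horizontal block $\bar\Theta^j_i$ picks up a correction $\bar\theta_{m+\alpha\,i}\wedge\bar\theta^{m+\alpha\,j}=(u_i\psi_\alpha)\wedge(-u^j\psi^\alpha)=0$, leaving $\Theta^j_i$. The mixed block $\bar\Theta^{m+\alpha}_i$ is computed from $d(u_i\psi^\alpha)+$ mixing terms: $du_i\wedge\psi^\alpha+u_i\,d\psi^\alpha$ combined with $\bar\theta^{m+\alpha}_k\wedge\theta^k_i=u_k\psi^\alpha\wedge\theta^k_i$ and $\psi^\alpha_\beta\wedge u_i\psi^\beta$ reorganizes, via the definition $u_{ij}\theta^j=du_i-u_k\theta^k_i$, into $u_{ij}\theta^j\wedge\psi^\alpha$. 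The purely vertical block $\bar\Theta^{m+\beta}_{m+\alpha}$ adds a cross contribution $\bar\theta_{i\,m+\alpha}\wedge\bar\theta^{i\,m+\beta}=(-u_i\psi_\alpha)\wedge(u^i\psi^\beta)=-|\nabla u|^2\psi_\alpha\wedge\psi^\beta$, but the Cartan sign convention (mind the opposite sign in $\bar\theta_{CB}\wedge\bar\theta^{CA}$) flips this to give $\Psi^\beta_\alpha+|\nabla u|^2\psi_\alpha\wedge\psi^\beta$. Reading off the components from $\bar\Theta^A_B=\frac12\bar R^A_{BCD}\bar\theta^C\wedge\bar\theta^D$ and using \eqref{orth coframe warp prod} to convert $\psi^\alpha=u^{-1}\bar\theta^{m+\alpha}$ yields the stated components in \eqref{comp riem warp prod}. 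The main bookkeeping obstacle is keeping track of the factors of $u$ and $u^{-1}$ (and the $\bar\eta$ signs) when passing between the coframe $\{\theta^i,\psi^\alpha\}$ and the orthonormal coframe $\{\bar\theta^A\}$; once this is done carefully, all formulas match.
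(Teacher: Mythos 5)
Your proposal is correct and follows essentially the same route as the paper: verify orthonormality and duality directly, characterize the connection forms by checking the ansatz \eqref{conn form warp prod} against the first structure equations plus skew-symmetry, then compute the curvature forms from the second structure equations and read off the Riemann components after converting $\psi^\alpha=u^{-1}\overline{\theta}^{m+\alpha}$. The only blemishes are two intermediate sign slips in the cross-terms (the correct raised forms are $\overline{\theta}^{m+\alpha\,j}=u^j\psi^\alpha$ and $\overline{\theta}^{i\,m+\beta}=-u^i\psi^\beta$, so the vertical block contributes $+|\nabla u|^2\psi_\alpha\wedge\psi^\beta$ directly, with no need for the ad hoc ``convention flip''), which do not affect the final formulas.
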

	\begin{proof}
		It is clear that $\{\overline{e}_A\}$ defined as in \eqref{orth frame warp prod} is a local orthonormal frame, indeed
		\begin{equation*}
		\bar{g}(\overline{e}_i ,\overline{e}_j)=g(e_i,e_j)=\eta_{ij}, \quad \bar{g}(\overline{e}_i,\overline{e}_{m+\alpha})=0
		\end{equation*}
		and
		\begin{equation*}
		\bar{g}(\overline{e}_{m+\alpha},\overline{e}_{m+\beta})= u^2g_F\left(\frac{\varepsilon_{\alpha}}{u},\frac{\varepsilon_{\beta}}{u}\right)=g_F(\varepsilon_{\alpha},\varepsilon_{\beta})={}^F\eta_{\alpha \beta}.
		\end{equation*}
		The relations \eqref{orth coframe warp prod} follows immediately from \eqref{orth frame warp prod}.
		
		To show the validity of \eqref{conn form warp prod} recall that the first structure equation on $M\times_u F$ are given by
		\begin{equation*}
		d\overline{\theta}^A=-\overline{\theta}^A_B\wedge \overline{\theta}^B.
		\end{equation*}
		For $A=i$ we obtain, using \eqref{orth coframe warp prod},
		\begin{align*}
		d\overline{\theta}^i=&-\overline{\theta}^i_B\wedge \overline{\theta}^B\\
		=&-\overline{\theta}^i_j\wedge \overline{\theta}^j-\overline{\theta}^i_{m+\alpha}\wedge \overline{\theta}^{m+\alpha}\\
		=&-\overline{\theta}^i_j\wedge \theta^j-u\overline{\theta}^i_{m+\alpha}\wedge \psi^{\alpha}
		\end{align*}
		and since, from the first structure equation on $M$,
		\begin{align*}
		d\overline{\theta}^i=&d\theta^i=-\theta^i_j\wedge \theta^j,
		\end{align*}
		we conclude from the above
		\begin{equation}\label{conn form warp prod for a =i}
		(\overline{\theta}^i_j-\theta^i_j)\wedge \theta^j+u\overline{\theta}^i_{m+\alpha}\wedge \psi^{\alpha}=0.
		\end{equation}
		For $A=m+\alpha$ we obtain, using \eqref{orth coframe warp prod},
		\begin{align*}
		d\overline{\theta}^{m+\alpha}=&-\overline{\theta}^{m+\alpha}_B\wedge \overline{\theta}^B\\
		=&-\overline{\theta}^{m+\alpha}_i\wedge \overline{\theta}^i-\overline{\theta}^{m+\alpha}_{m+\beta}\wedge \overline{\theta}^{m+\beta}\\
		=&-\overline{\theta}^{m+\alpha}_i\wedge \theta^i-u\overline{\theta}^{m+\alpha}_{m+\beta}\wedge \psi^{\beta}
		\end{align*}
		and since, from the first structure equation for $F$,
		\begin{align*}
		d\overline{\theta}^{m+\alpha}=&d(u\psi^{\alpha})\\
		=&du\wedge \psi^{\alpha}+ud\psi^{\alpha}\\
		=&u_i\theta^i\wedge \psi^{\alpha}-u\psi^{\alpha}_{\beta}\wedge \psi^{\beta}\\
		=&-u_i\psi^{\alpha}\wedge \theta^i-u\psi^{\alpha}_{\beta}\wedge \psi^{\beta}
		\end{align*}
		we conclude from the above
		\begin{equation}\label{conn form warp prod for a =m + alpha}
		u(\overline{\theta}^{m+\alpha}_{m+\beta}-\psi^{\alpha}_{\beta})\wedge \psi^{\beta}+(\overline{\theta}^{m+\alpha}_i-u_i\psi^{\alpha})\wedge\theta^i=0.
		\end{equation}
		It is immediate to verify that $\{\overline{\theta}^A_B\}$ given by \eqref{conn form warp prod} satisfies \eqref{conn form warp prod for a =i}, \eqref{conn form warp prod for a =m + alpha} and
		\begin{equation*}
		\bar{\eta}_{CB}\overline{\theta}^C_A+\bar{\eta}_{AC}\overline{\theta}^C_B=0,
		\end{equation*}
		hence they are the connection forms associated to the coframe $\{\overline{\theta}^A\}$.
		
		The second structure equation reads as
		\begin{equation*}
			\overline{\Theta}^B_A=d\overline{\theta}^B_A+\overline{\theta}_{CA}\land \overline{\theta}^{CB}.
		\end{equation*}
		For $A=i$ and $B=j$, using \eqref{conn form warp prod}, $\psi_{\alpha}\land\psi^{\alpha}=0$ and the second structure equations of $(M,g)$ we obtain
		\begin{align*}
			\overline{\Theta}^j_i=&d\overline{\theta}^j_i+\overline{\theta}_{ki}\land \overline{\theta}^{kj}+\overline{\theta}_{m+\alpha i}\land \overline{\theta}^{m+\alpha j}\\
			=&d\theta^j_i+\theta_{ki}\land\theta^{kj}+u_iu^j\psi_{\alpha}\land \psi^{\alpha}\\
			=&\Theta^j_i.
		\end{align*}		
		For $A=m+\alpha$ and $B=m+\beta$, using \eqref{conn form warp prod} and the second structure equations of $(F,g_F)$ we get
		\begin{align*}
			\overline{\Theta}^{m+\beta}_{m+\alpha}=&d\overline{\theta}^{m+\beta}_{m+\alpha}+\overline{\theta}_{k m+\alpha}\land \overline{\theta}^{k m+\beta}+\overline{\theta}_{m+\gamma\,  m+\alpha}\land \overline{\theta}^{m+\gamma\, m+\beta}\\
			=&d\psi^{\beta}_{\alpha}+u_ku^k\psi_{\alpha}\land \psi^{\beta}+\psi_{\gamma \alpha}\land \psi^{\gamma \beta}\\
			=&\Psi^{\beta}_{\alpha}+|\nabla u|^2\psi_{\alpha}\land \psi^{\beta}
		\end{align*}
		For $A=i$ and $B=m+\alpha$, using \eqref{conn form warp prod}, the definition of $u_{ij}$ and the first structure equations of $(F,g_F)$ we have
		\begin{align*}
			\overline{\Theta}^{m+\alpha}_i=&d\overline{\theta}^{m+\alpha}_i+\overline{\theta}_{k i}\land \overline{\theta}^{k m+\alpha}+\overline{\theta}_{m+\gamma i}\land \overline{\theta}^{m+\gamma\, m+\alpha}\\
			=&d(u_i\psi^{\alpha})-u^k\theta_{ki}\land \psi^{\alpha}+u_i\psi_{\gamma}\land \psi^{\gamma \alpha}\\
			=&(du_i-u^k\theta_{ki})\land \psi^{\alpha}+u_i(d\psi^{\alpha}-\psi^{\gamma \alpha}\land \psi_{\gamma})\\
			=&u_{ik}\theta^k\land \psi^{\alpha}.
		\end{align*}
		Since
		\begin{equation*}
			\overline{\eta}_{CB}\overline{\Theta}^C_A+\overline{\eta}_{CA}\overline{\Theta}^C_B=0
		\end{equation*}
		we deduce that
		\begin{equation*}
			\overline{\Theta}^i_{m+\alpha}=-\eta^{ij}{}^F\eta_{\alpha\beta}\overline{\Theta}^{m+\beta}_j=-\eta^{ij}{}^F\eta_{\alpha\beta}u_{jk}\theta^k\land \psi^{\beta}=-u^{jk}\theta_k\land \psi_{\alpha}.
		\end{equation*}
			
		By definition of $\overline{\mbox{Riem}}$, the Riemann tensor of $M\times_u F$, we have
		\begin{equation*}
			\overline{\Theta}^A_B=\frac{1}{2}\bar{R}^A_{BCD}\overline{\theta}^C\land \overline{\theta}^D.
		\end{equation*}
		For $A=m+\alpha$ and $B=i$, with the aid of \eqref{orth coframe warp prod},
		\begin{equation*}
		\overline{\Theta}^{m+\alpha}_i=\frac{1}{2}\bar{R}^{m+\alpha}_{iCD}\overline{\theta}^C\land \overline{\theta}^D=\frac{1}{2}\left(\bar{R}^{m+\alpha}_{ikt}\theta^k\land\theta^t+u\bar{R}^{m+\alpha}_{ik\, m+\delta}\theta^k\land \psi^{\delta}+u\bar{R}^{m+\alpha}_{i\, m+\delta\, k}\psi^{\delta}\land \theta^k+u^2\bar{R}^{m+\alpha}_{i \, m+\gamma\, m+\delta}\psi^{\gamma}\land \psi^{\delta}\right).
		\end{equation*}
		Using \eqref{curv form warp prod} and the symmetries of $\overline{\mbox{Riem}}$ the above gives
		\begin{equation*}
		u_{ik}\theta^k\land \psi^{\alpha}=\overline{\Theta}^{m+\alpha}_i=\frac{1}{2}\bar{R}^{m+\alpha}_{ikt}\theta^k\land\theta^t+u\bar{R}^{m+\alpha}_{ik\, m+\delta}\theta^k\land \psi^{\delta}+\frac{1}{2}u^2\bar{R}^{m+\alpha}_{i \, m+\gamma\, m+\delta}\psi^{\gamma}\land \psi^{\delta},
		\end{equation*}
		hence we deduce
		\begin{equation}\label{first rel Riemann warp prod}
		\bar{R}_{m+\alpha \, ijk}=0, \quad \bar{R}_{m+\alpha\, i \, m+\beta \, m+\gamma}=0, \quad \bar{R}_{m+\alpha\, i j\, m+\beta}=\frac{u_{ij}}{u}{}^F\eta_{\alpha\beta}.
		\end{equation}
		For $A=i$ and $B=j$, using \eqref{orth coframe warp prod},
		\begin{equation*}
		\overline{\Theta}^i_j=\frac{1}{2}\bar{R}^i_{jCD}\overline{\theta}^C\land \overline{\theta}^D=\frac{1}{2}\left(\bar{R}^i_{jkt}\theta^k\land\theta^t+u\bar{R}^i_{j k m+\alpha}\theta^k\land \psi^{\alpha}+u\bar{R}^i_{j \, m+\alpha \, k}\psi^{\alpha}\land \theta^k+u^2\bar{R}^i_{j\, m+\alpha\, m+\beta}\psi^{\alpha}\land \psi^{\beta}\right).
		\end{equation*}
		Using the \eqref{curv form warp prod}, the symmetries of $\overline{\mbox{Riem}}$ and \eqref{first rel Riemann warp prod} the above gives
		\begin{equation*}
		\frac{1}{2}R^i_{jkt}\theta^k\land \theta^t=\Theta^i_j=\overline{\Theta}^i_j=\frac{1}{2}\bar{R}^i_{jkt}\theta^k\land\theta^t+\frac{1}{2}u^2\bar{R}^i_{j\, m+\alpha\, m+\beta}\psi^{\alpha}\land \psi^{\beta},
		\end{equation*}
		so that
		\begin{equation}\label{second rel Riemann warp prod}
		\bar{R}_{ijkt}=R_{ijkt}, \quad \bar{R}_{i j \, m+\alpha \, m+\beta}=0.
		\end{equation}
		For $A=m+\alpha$ and $B=m+\beta$,
		\begin{equation*}
		\overline{\Theta}^{m+\alpha}_{m+\beta}=\frac{1}{2}\left(\bar{R}^{m+\alpha}_{m+\beta \,ij}\theta^i\land \theta^j+u\bar{R}^{m+\alpha}_{m+\beta \, i\, m+\gamma}\theta^i\land \psi^{\gamma}+u\bar{R}^{m+\alpha}_{m+\beta\, m+\gamma\, i}\psi^{\gamma}\land \theta^i+u^2\bar{R}^{m+\alpha}_{m+\beta \, m+\gamma\, m+\delta}\psi^{\gamma}\land \psi^{\delta}\right).
		\end{equation*}
		Using the \eqref{curv form warp prod}, the symmetries of $\overline{\mbox{Riem}}$, \eqref{first rel Riemann warp prod} and \eqref{second rel Riemann warp prod} from the above we infer
		\begin{equation*}
		\frac{1}{2}{}^FR^{\alpha}_{\beta \gamma \delta}\psi^{\gamma}\land \psi^{\delta}+|\nabla u|^2\psi_{\beta}\land \psi^{\alpha}=\Psi^{\alpha}_{\beta}+|\nabla u|^2\psi_{\beta}\land \psi^{\alpha}=\overline{\Theta}^{m+\alpha}_{m+\beta}=\frac{1}{2}u^2\bar{R}^{m+\alpha}_{m+\beta \, m+\gamma\, m+\delta}\psi^{\gamma}\land \psi^{\delta}.
		\end{equation*}
		Skew-symmetrizing the above we obtain
		\begin{equation*}
		u^2\bar{R}_{m+\alpha\, m+\beta\, m+\gamma\, m+\delta}={}^FR_{\alpha\beta\gamma\delta}+|\nabla u|^2({}^F\eta_{\alpha\delta}{}^F\eta_{\beta\gamma}-{}^F\eta_{\alpha \gamma}{}^F\eta_{\beta\delta}),
		\end{equation*}
		that is,
		\begin{equation*}
		\bar{R}_{m+\alpha\,m+\beta\, m+\gamma\, m+\delta}=\frac{1}{u^2}{}^FR_{\alpha\beta\gamma\delta}-\frac{|\nabla u|^2}{u^2}({}^F\eta_{\alpha \gamma}{}^F\eta_{\beta\delta}-{}^F\eta_{\alpha\delta}{}^F\eta_{\beta\gamma}).
		\end{equation*}
		
		We are finally able to conclude the validity of \eqref{comp riem warp prod}.
	\end{proof}
	
	Since $u>0$ on $M$ there exists $f\in\mathcal{C}^{\infty}(M)$ such that
	\begin{equation}\label{f in funct of u warp prod ricci}
	u=e^{-\frac{f}{d}}.
	\end{equation}
	As an immediate consequence of the above Proposition we have
	\begin{cor}\label{cor Ricci for warped prod}
		In the notations above, the non-vanishing components of $\overline{\mbox{Ric}}$, the Ricci tensor of $(\bar{M},\overline{g})$, are given by
		\begin{equation}\label{ricci tensor warped prod in terms of u}
		\bar{R}_{ij}=R_{ij}-d\frac{u_{ij}}{u}, \quad \bar{R}_{m+\alpha \, m+\beta}=-\left(\frac{\Delta u}{u}+(d-1)\frac{|\nabla u|^2}{u^2}\right){}^F\eta_{\alpha \beta}+\frac{1}{u^2}{}^FR_{\alpha\beta},
		\end{equation}
		where $R_{ij}$ and ${}^FR_{\alpha\beta}$ are the components of the Ricci tensors of $(M,g)$ and $(F,g_F)$, respectively. Moreover
		\begin{equation*}
		\bar{S}=S+\frac{{}^FS}{u^2}-d\left[2\frac{\Delta u}{u}+(d-1)\frac{|\nabla u|^2}{u^2}\right].
		\end{equation*}
		Equivalently, in terms of $f$, where $f$ is defined by \eqref{f in funct of u warp prod ricci},
		\begin{equation}\label{ricci tensor warped prod}
		\bar{R}_{ij}=R_{ij}+f_{ij}-\frac{1}{d}f_if_j, \quad \bar{R}_{m+\alpha\, m+\beta}=\frac{\Delta_ff}{d}{}^F\eta_{\alpha\beta}+e^{\frac{2f}{d}}{}^FR_{\alpha \beta}
		\end{equation}
		and
		\begin{equation*}
		\bar{S}=S+e^{\frac{2f}{d}}{}^FS+2\Delta f-\frac{d+1}{d}|\nabla f|^2.
		\end{equation*}
	\end{cor}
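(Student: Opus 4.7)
The proof is a direct trace computation starting from the Riemann components \eqref{comp riem warp prod} that were just established, followed by a change of variable from $u$ to $f$. I will organize it into two main steps.

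\textbf{Step 1: Tracing the Riemann tensor.} Recalling that $\bar{\eta}^{AB}$ is block-diagonal with blocks $\eta^{ij}$ and ${}^F\eta^{\alpha\beta}$, I write
\begin{equation*}
\bar{R}_{AB}=\eta^{kt}\bar{R}_{kAtB}+{}^F\eta^{\gamma\delta}\bar{R}_{m+\gamma\, A\, m+\delta\, B}
\end{equation*}
and plug in \eqref{comp riem warp prod}, distinguishing three cases. For $A=i$, $B=j$: the first contraction reproduces $R_{ij}$, while for the second I rearrange indices using the standard Riemann symmetries to obtain $\bar{R}_{m+\gamma\, i\, m+\delta\, j}=\bar{R}_{i\, m+\gamma\, j\, m+\delta}=-\tfrac{u_{ij}}{u}{}^F\eta_{\gamma\delta}$, whose ${}^F\eta^{\gamma\delta}$-trace is $-d\,u_{ij}/u$. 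For $A=m+\alpha$, $B=m+\beta$: the first contraction gives $-(\Delta u/u){}^F\eta_{\alpha\beta}$ by tracing $u_{ij}$ with $\eta^{kt}$, while the second becomes $u^{-2}{}^FR_{\alpha\beta}$ plus the fibre-only correction $-(d-1)|\nabla u|^2 u^{-2}\,{}^F\eta_{\alpha\beta}$ from the ${}^F\eta\otimes{}^F\eta$ piece (the trace of $\mathrm{Id}\mathbin{\circleland}\mathrm{Id}$ contributes $d-1$, not $d$). For the mixed entry $A=i$, $B=m+\alpha$ I note that both candidate Riemann components $\bar{R}_{k\, i\, t\, m+\alpha}$ and $\bar{R}_{m+\gamma\, i\, m+\delta\, m+\alpha}$ vanish in view of \eqref{first rel Riemann warp prod} (combined with the pair-swap symmetry), so $\bar{R}_{i\, m+\alpha}=0$ and the two blocks exhaust the non-vanishing Ricci components. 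The scalar curvature formula in terms of $u$ is then immediate, since $\bar{S}=\eta^{ij}\bar{R}_{ij}+{}^F\eta^{\alpha\beta}\bar{R}_{m+\alpha\, m+\beta}$, picking up the two contributions $-d\Delta u/u$ and ${}^FS/u^2-d(\Delta u/u+(d-1)|\nabla u|^2/u^2)$.

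\textbf{Step 2: Conversion to $f$.} From $u=e^{-f/d}$ I compute $u_i/u=-f_i/d$, and then covariantly differentiating,
\begin{equation*}
\frac{u_{ij}}{u}=-\frac{1}{d}\left(f_{ij}-\frac{f_if_j}{d}\right),
\end{equation*}
so $-d\,u_{ij}/u=f_{ij}-f_if_j/d$, which gives the first formula in \eqref{ricci tensor warped prod}. Tracing yields $\Delta u/u=-(\Delta f-|\nabla f|^2/d)/d$ and $|\nabla u|^2/u^2=|\nabla f|^2/d^2$, and combining these two,
\begin{equation*}
-\left(\frac{\Delta u}{u}+(d-1)\frac{|\nabla u|^2}{u^2}\right)=\frac{1}{d}\bigl(\Delta f-|\nabla f|^2\bigr)=\frac{\Delta_f f}{d},
\end{equation*}
which, together with $u^{-2}=e^{2f/d}$, produces the second formula in \eqref{ricci tensor warped prod}. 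The scalar curvature identity follows by tracing once more and simplifying $-2\,|\nabla f|^2/d-(d-1)|\nabla f|^2/d=-(d+1)|\nabla f|^2/d$.

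There is no real obstacle here: the content of the corollary is essentially bookkeeping on top of \eqref{comp riem warp prod}. The only place where one has to be vigilant is in Step 1 when tracing the curvature of the fibre block, because the $|\nabla u|^2$ term carries a Kulkarni--Nomizu--like factor whose trace over ${}^F\eta^{\gamma\delta}$ gives $d-1$ (one must not accidentally produce $d$), and in Step 2 where the cancellation producing the drift Laplacian $\Delta_f f=\Delta f-|\nabla f|^2$ must be tracked carefully across the two different powers of $d$ in the denominator.
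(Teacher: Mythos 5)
Your proposal is correct and follows exactly the route the paper intends: the paper states the Corollary as an immediate consequence of the preceding Proposition, and your argument simply carries out the two traces of \eqref{comp riem warp prod} (with the correct $d-1$ factor from the fibre block and vanishing mixed components) and the substitution $u=e^{-f/d}$, all of which check out. Nothing further is needed.
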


	Let $\varphi:M\to N$ be a smooth map with source the base manifold and target a Riemannian manifold $(N,\eta)$ and denote
	\begin{equation}\label{dfn of Phi}
	\bar{\varphi}:=\varphi\circ \pi_M:\bar{M}\to N.
	\end{equation}
	We use the indexes convention
	\begin{equation*}
	1\leq a,b,\ldots \leq n,
	\end{equation*}
	where $n$ is the dimension of $N$. Let $\{E_a\}$, $\{\omega^a\}$, $\{\omega^a_b\}$, $\{\Omega^a_b\}$ be, respectively, a $\eta$-orthonormal frame, $\eta$-orthonormal coframe, connection forms and curvatures form on a open subset $\mathcal{V}$ of $N$ such that $\varphi^{-1}(\mathcal{V})\subseteq \overline{\mathcal{U}}$. We denote
	\begin{equation*}
		d\bar{\varphi}=\bar{\varphi}^a_A\overline{\theta}^A\otimes E_a,
	\end{equation*}
	so that
	\begin{equation*}
		\bar{\varphi}^*\eta=\bar{\varphi}^a_A\bar{\varphi}^a_B\overline{\theta}^A\otimes \overline{\theta}^B.
	\end{equation*}
	
	\begin{prop}\label{prop tension of Phi in function of tension of phi}
		In the assumptions and the notations above
		\begin{itemize}
			\item The components of $d\bar{\varphi}$ are given by
			\begin{equation}\label{comp of d bar Phi}
			\bar{\varphi}^a_i=\varphi^a_i, \quad \bar{\varphi}^a_{m+\alpha}=0,
			\end{equation}
			as a consequence
			\begin{equation}\label{density of energy of Phi compared to phi}
			\overline{|d\bar{\varphi}|}^2=|d\varphi|^2.
			\end{equation}
			\item The tension $\bar{\tau}(\bar{\varphi})$ of $\bar{\varphi}:M\times_u F\to (N,\eta)$ is given by
			\begin{equation}\label{tension field of Phi in funct of tension field of phi in terms of u}
			\bar{\tau}(\bar{\varphi})=\tau(\varphi)+d\frac{d\varphi(\nabla u)}{u},
			\end{equation}
			in terms of $f$ given by \eqref{f in funct of u warp prod ricci},
			\begin{equation}\label{tension field of Phi in funct of tension field of phi}
			\bar{\tau}(\bar{\varphi})=\tau(\varphi)-d\varphi(\nabla f).
			\end{equation}
		\end{itemize}
	\end{prop}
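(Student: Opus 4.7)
The plan is to compute $d\bar{\varphi}$ and $\bar{\nabla} d\bar{\varphi}$ directly in the adapted coframe $\{\overline{\theta}^A\}$ constructed above, using the expressions \eqref{orth coframe warp prod} for the coframe and \eqref{conn form warp prod} for the Levi-Civita connection forms of $M\times_u F$. The computation is a bookkeeping exercise: the key conceptual input is that $\bar\varphi=\varphi\circ\pi_M$ factors through the projection and therefore $d\bar\varphi$ annihilates every vector tangent to the fibre.

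First I would establish \eqref{comp of d bar Phi}. Since $\bar\varphi=\varphi\circ\pi_M$, for any $X\in\mathfrak{X}(M)$ and $V\in\mathfrak{X}(F)$, lifted horizontally and vertically respectively, one has $d\bar\varphi(X)=d\varphi(X)$ and $d\bar\varphi(V)=0$. Writing $d\bar\varphi=\bar\varphi^a_A\overline{\theta}^A\otimes E_a$ and using $\overline{\theta}^i=\theta^i$, $\overline{\theta}^{m+\alpha}=u\psi^\alpha$, we obtain $\bar\varphi^a_i=\varphi^a_i$ and $\bar\varphi^a_{m+\alpha}=0$. Contracting with $\bar\eta^{AB}$ immediately gives \eqref{density of energy of Phi compared to phi}.

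Next I would compute the components $\bar\varphi^a_{AB}$ of the generalized second fundamental form via the defining relation
\begin{equation*}
\bar\varphi^a_{AB}\overline{\theta}^B=d\bar\varphi^a_A-\bar\varphi^a_C\overline{\theta}^C_A+\bar\varphi^b_A\omega^a_b.
\end{equation*}
For $A=i$, the term $\bar\varphi^a_{m+\alpha}\overline{\theta}^{m+\alpha}_i$ drops because $\bar\varphi^a_{m+\alpha}=0$, and with $\overline{\theta}^j_i=\theta^j_i$ the right-hand side reduces to $d\varphi^a_i-\varphi^a_j\theta^j_i+\varphi^b_i\omega^a_b=\varphi^a_{ij}\theta^j$. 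Hence $\bar\varphi^a_{ij}=\varphi^a_{ij}$ and $\bar\varphi^a_{i\,m+\alpha}=0$. For $A=m+\alpha$, the left-hand side collapses because $\bar\varphi^a_{m+\alpha}=0$, and only the connection contribution $-\bar\varphi^a_i\overline{\theta}^i_{m+\alpha}=\varphi^a_i u^i\psi_\alpha$ survives; rewriting $\psi_\alpha={}^F\eta_{\alpha\beta}\psi^\beta={}^F\eta_{\alpha\beta}\overline{\theta}^{m+\beta}/u$ gives $\bar\varphi^a_{m+\alpha\,i}=0$ and $\bar\varphi^a_{m+\alpha\,m+\beta}=(\varphi^a_iu^i/u)\,{}^F\eta_{\alpha\beta}$.

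Finally, tracing with $\bar\eta^{AB}$ yields
\begin{equation*}
\bar\tau(\bar\varphi)^a=\eta^{ij}\varphi^a_{ij}+{}^F\eta^{\alpha\beta}\frac{\varphi^a_iu^i}{u}{}^F\eta_{\alpha\beta}=\tau(\varphi)^a+d\,\frac{\varphi^a_iu^i}{u},
\end{equation*}
which is \eqref{tension field of Phi in funct of tension field of phi in terms of u}. Substituting $u=e^{-f/d}$, so that $\nabla u/u=-\nabla f/d$, gives \eqref{tension field of Phi in funct of tension field of phi}. I do not expect any real obstacle here; the only point requiring care is the bookkeeping between the fibre objects $\psi^\alpha$, $\psi_\alpha$ and the rescaled coframe $\overline{\theta}^{m+\alpha}=u\psi^\alpha$, since the coefficients $\bar\varphi^a_{AB}$ are by definition read off against $\{\overline{\theta}^B\}$ rather than against $\{\psi^\beta\}$.
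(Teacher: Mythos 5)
Your proposal is correct and follows essentially the same route as the paper: read off $\bar\varphi^a_A$ from $d\bar\varphi=\pi_M^*d\varphi$ in the adapted coframe, compute $\bar\varphi^a_{AB}$ from the defining relation using the warped-product connection forms \eqref{conn form warp prod}, and trace. (Your trace even carries the correct sign $+d\,\varphi^a_iu^i/u$, consistent with \eqref{tension field of Phi in funct of tension field of phi in terms of u}, where the paper's intermediate display has a stray minus sign.)
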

	\begin{proof}
		Using \eqref{orth coframe warp prod},
		\begin{equation*}
		d\bar{\varphi}=\bar{\varphi}^a_A\overline{\theta}^A\otimes E_a=\bar{\varphi}^a_i\theta^i\otimes E_a+u\bar{\varphi}^a_{m+\alpha}\psi^{\alpha}\otimes E_a.
		\end{equation*}
		From \eqref{dfn of Phi} we deduce $d\bar{\varphi}=\pi_M^*d\varphi\equiv d\varphi$, that is,
		\begin{equation*}
		d\bar{\varphi}=\pi_M^*d\varphi\equiv d\varphi=\varphi^a_i\theta^i\otimes E_a,
		\end{equation*}
		and thus \eqref{comp of d bar Phi} follows by comparison with the above. In particular
		\begin{equation*}
		\bar{\eta}^{AB}\bar{\varphi}^a_A\bar{\varphi}^a_B=\eta^{ij}\varphi^a_i\varphi^a_j,
		\end{equation*}
		hence \eqref{density of energy of Phi compared to phi} holds. Moreover
		\begin{equation*}
		\bar{\varphi}^a_{AB}\overline{\theta}^B=d\bar{\varphi}^a_A-\bar{\varphi}^a_B\overline{\theta}^B_A+\bar{\varphi}^b_A\omega^a_b,
		\end{equation*}
		that is, using \eqref{conn form warp prod},
		\begin{equation*}
		\bar{\varphi}^a_{Aj}\theta^j+u\bar{\varphi}^a_{A\, m+\alpha}\psi^{\alpha}=d\bar{\varphi}^a_A-\bar{\varphi}^a_j\overline{\theta}^j_A-\bar{\varphi}^a_{m+\alpha}\overline{\theta}^{m+\alpha}_A+\bar{\varphi}^b_A\omega^a_b,
		\end{equation*}
		For $A=i$ we obtain, using \eqref{comp of d bar Phi} and \eqref{conn form warp prod},
		\begin{equation*}
		\bar{\varphi}^a_{ij}\theta^j+u\bar{\varphi}^a_{i\, m+\alpha}\psi^{\alpha}=d\varphi^a_i-\varphi^a_j\theta^j_i+\varphi^b_i\omega^a_b=\varphi^a_{ij}\theta^j,
		\end{equation*}
		hence
		\begin{equation}\label{sec cov der of Phi first comp}
		\bar{\varphi}^a_{ij}=\varphi^a_{ij}, \quad \bar{\varphi}^a_{i\, m+\alpha}=0.
		\end{equation}
		For $A=m+\beta$ we obtain using \eqref{comp of d bar Phi} and \eqref{conn form warp prod},
		\begin{equation*}
		\bar{\varphi}^a_{m+\beta\, j}\theta^j+u\bar{\varphi}^a_{m+\beta\, m+\alpha}\psi^{\alpha}=u^j\varphi^a_j\psi_{\beta},
		\end{equation*}
		hence,
		\begin{equation}\label{sec cov der of Phi second comp}
		\bar{\varphi}^a_{m+\alpha\, j}=0, \quad \bar{\varphi}^a_{m+\alpha \, m+\beta}=\varphi^a_j\frac{u^j}{u}{}^F\eta_{\alpha \beta}.
		\end{equation}
		Then, using \eqref{sec cov der of Phi first comp} and \eqref{sec cov der of Phi second comp}, we infer
		\begin{equation*}
		\bar{\tau}(\bar{\varphi})^a=\eta^{AB}\bar{\varphi}^a_{AB}=\eta^{ij}\bar{\varphi}^a_{ij}+{}^F\eta^{\alpha\beta}\bar{\varphi}^a_{m+\alpha\, m+\beta}=\tau(\varphi)^a-d\varphi^a_j\frac{u^j}{u},
		\end{equation*}
		that is, \eqref{tension field of Phi in funct of tension field of phi in terms of u}. Finally \eqref{tension field of Phi in funct of tension field of phi}, where $f\in\mathcal{C}^{\infty}(M)$ given by \eqref{f in funct of u warp prod ricci}, follows from \eqref{tension field of Phi in funct of tension field of phi in terms of u}.
	\end{proof}

	Combining \hyperref[cor Ricci for warped prod]{Corollary \ref*{cor Ricci for warped prod}} with \hyperref[prop tension of Phi in function of tension of phi]{Proposition \ref*{prop tension of Phi in function of tension of phi} } we immediately get
	
	\begin{cor}\label{cor Phi Ricci for warped prod}
		In the notations above, the non-vanishing components of $\overline{\mbox{Ric}}^{\bar{\varphi}}$, the $\bar{\varphi}$-Ricci tensor of $M\times_uF$, are given by
		\begin{equation}\label{phi ricci tensor warped prod in terms of u}
		\bar{R}_{ij}^{\bar{\varphi}}=R^{\varphi}_{ij}-d\frac{u_{ij}}{u}, \quad \bar{R}^{\bar{\varphi}}_{m+\alpha \, m+\beta}=-\left(\frac{\Delta u}{u}+(d-1)\frac{|\nabla u|^2}{u^2}\right){}^F\eta_{\alpha \beta}+\frac{1}{u^2}{}^FR_{\alpha\beta},
		\end{equation}
		where $R^{\varphi}_{ij}$ and ${}^FR_{\alpha\beta}$ are the components of the $\varphi$-Ricci tensors of $(M,g)$ and of the Ricci tensor of $(F,g_F)$, respectively. Moreover
		\begin{equation*}
		\bar{S}^{\bar{\varphi}}=S^{\varphi}+\frac{{}^FS}{u^2}-d\left[2\frac{\Delta u}{u}+(d-1)\frac{|\nabla u|^2}{u^2}\right].
		\end{equation*}
		Equivalently, in terms of $f$, where $f$ is defined by \eqref{f in funct of u warp prod ricci},
		\begin{equation}\label{phi ricci tensor warped prod}
		\bar{R}^{\bar{\varphi}}_{ij}=R^{\varphi}_{ij}+f_{ij}-\frac{1}{d}f_if_j, \quad \bar{R}^{\bar{\varphi}}_{m+\alpha\, m+\beta}=\frac{\Delta_ff}{d}{}^F\eta_{\alpha\beta}+e^{\frac{2f}{d}}{}^FR_{\alpha \beta}
		\end{equation}
		and
		\begin{equation*}
		\bar{S}^{\bar{\varphi}}=S^{\varphi}+e^{\frac{2f}{d}}{}^FS+2\Delta f-\frac{d+1}{d}|\nabla f|^2.
		\end{equation*}
	\end{cor}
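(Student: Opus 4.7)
The plan is direct: invoke the definition $\overline{\mbox{Ric}}^{\bar\varphi} := \overline{\mbox{Ric}} - \alpha\,\bar\varphi^*\eta$ from \eqref{def phi Ricci} on $(\bar M,\bar g)$, and simply combine the Ricci formulas of \hyperref[cor Ricci for warped prod]{Corollary \ref*{cor Ricci for warped prod}} with the components of $d\bar\varphi$ computed in \eqref{comp of d bar Phi}. The key observation, already encoded in that proposition, is that $\bar\varphi = \varphi\circ\pi_M$ depends only on the base coordinates, so $\bar\varphi^a_i=\varphi^a_i$ and $\bar\varphi^a_{m+\alpha}=0$; consequently $\bar\varphi^*\eta$ has only the horizontal block nonzero, with $(\bar\varphi^*\eta)_{ij}=\varphi^a_i\varphi^a_j$, while the mixed block and the fiber-fiber block both vanish.

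First, in the base-base directions I would subtract $\alpha\bar\varphi^*\eta$ from $\bar R_{ij}$, obtaining
\begin{equation*}
\bar R^{\bar\varphi}_{ij} \;=\; \bar R_{ij}-\alpha\,\varphi^a_i\varphi^a_j \;=\; \Bigl(R_{ij}-d\,\tfrac{u_{ij}}{u}\Bigr)-\alpha\,\varphi^a_i\varphi^a_j \;=\; R^\varphi_{ij}-d\,\tfrac{u_{ij}}{u}.
\end{equation*}
In the fiber-fiber directions, since $(\bar\varphi^*\eta)_{m+\alpha\,m+\beta}=0$, one simply has $\bar R^{\bar\varphi}_{m+\alpha\,m+\beta}=\bar R_{m+\alpha\,m+\beta}$, which is exactly the second identity of \eqref{ricci tensor warped prod in terms of u}. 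Mixed components vanish for both $\overline{\mbox{Ric}}$ and $\bar\varphi^*\eta$. For the $\bar\varphi$-scalar curvature I would use \eqref{def of phi scalar curv} together with \eqref{density of energy of Phi compared to phi} to write
\begin{equation*}
\bar S^{\bar\varphi}\;=\;\bar S-\alpha\,\overline{|d\bar\varphi|}^{\,2}\;=\;\bar S-\alpha\,|d\varphi|^2\;=\;(S-\alpha|d\varphi|^2)+\tfrac{{}^FS}{u^2}-d\Bigl[2\tfrac{\Delta u}{u}+(d-1)\tfrac{|\nabla u|^2}{u^2}\Bigr],
\end{equation*}
which is the stated expression.

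To pass to the equivalent formulation in terms of $f$ given by \eqref{f in funct of u warp prod ricci}, i.e.\ $u=e^{-f/d}$, I would substitute the elementary identities $u_{ij}/u=-f_{ij}/d+f_if_j/d^2$, $|\nabla u|^2/u^2=|\nabla f|^2/d^2$, $\Delta u/u=-\Delta f/d+|\nabla f|^2/d^2$ and $u^{-2}=e^{2f/d}$ into the formulas just obtained; recalling $\Delta_f f=\Delta f-|\nabla f|^2$, a short rearrangement delivers both \eqref{phi ricci tensor warped prod} and the corresponding expression for $\bar S^{\bar\varphi}$. There is no real obstacle here: the entire content of the corollary reduces to observing that the $\varphi$-twist acts nontrivially only on the horizontal block because $\bar\varphi$ factors through $\pi_M$, and then concatenating \hyperref[cor Ricci for warped prod]{Corollary \ref*{cor Ricci for warped prod}} with \hyperref[prop tension of Phi in function of tension of phi]{Proposition \ref*{prop tension of Phi in function of tension of phi}}.
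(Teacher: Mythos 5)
Your proof is correct and follows exactly the route the paper intends: the paper itself presents this corollary as an immediate consequence of combining Corollary \ref{cor Ricci for warped prod} with Proposition \ref{prop tension of Phi in function of tension of phi}, which is precisely your observation that $\bar{\varphi}^*\eta$ is nonzero only in the horizontal block together with the substitution $u=e^{-f/d}$. All your elementary identities ($u_{ij}/u=-f_{ij}/d+f_if_j/d^2$, $\Delta u/u=-\Delta f/d+|\nabla f|^2/d^2$, etc.) check out and reproduce the stated formulas.
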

	
	\begin{thm}\label{thm harm einst warp prod}
		Let $(M,g)$ and $(F,g_F)$ be pseudo-Riemannian manifolds of dimension $m$ and $d$ respectively, with $m+d\geq 3$, and $(N,\eta)$ a Riemannian manifold. Let $f\in\mathcal{C}^{\infty}(M)$ and $\varphi:M\to N$ smooth. Set $u$ as in \eqref{f in funct of u warp prod ricci} and $\bar{\varphi}:\bar{M}\to N$ as in \eqref{dfn of Phi}. Then the following are equivalent
		\begin{itemize}
			\item $M\times_u F$ is harmonic-Einstein with respect to $\alpha\in\mathbb{R}\setminus\{0\}$ and $\bar{\varphi}$ and with $\bar{\varphi}$-scalar curvature $\lambda\in\mathbb{R}$.
			\item $(M,g)$ satisfies, for some $\alpha\in\mathbb{R}\setminus\{0\}$ and $\lambda\in\mathbb{R}$,
			\begin{equation}\label{grad einst type struct on M warp prod harm einst}
			\begin{dcases}
			\mbox{Ric}^{\varphi}+\mbox{Hess}(f)-\frac{1}{d}df\otimes df=\frac{\lambda}{m+d} g\\
			\tau(\varphi)=d\varphi(\nabla f),
			\end{dcases}
			\end{equation}
			$(F,g_F)$ is Einstein with scalar curvature $\Lambda\in\mathbb{R}$ and the following equation holds
			\begin{equation}\label{Lambda in warp prod second factor einst consnt}
			\Delta_ff-\frac{d}{m+d}\lambda+\Lambda e^{\frac{2f}{d}}=0.
			\end{equation}
		\end{itemize}
	\end{thm}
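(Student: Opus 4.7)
The plan is to unfold the harmonic-Einstein condition on $M\times_u F$ into its two ingredients $\bar{\tau}(\bar{\varphi})=0$ and $\mathring{\overline{\mbox{Ric}}}^{\bar{\varphi}}=0$, and translate each of them into the claimed conditions on $(M,g)$, $(F,g_F)$, and $f$ using \hyperref[prop tension of Phi in function of tension of phi]{Proposition \ref*{prop tension of Phi in function of tension of phi}} and \hyperref[cor Phi Ricci for warped prod]{Corollary \ref*{cor Phi Ricci for warped prod}}. Since the warped product is harmonic-Einstein and $m+d\geq 3$, by \hyperref[rmk phi curv of harm einst]{Remark \ref*{rmk phi curv of harm einst}} the $\bar{\varphi}$-scalar curvature $\lambda$ is constant, so the traceless condition $\mathring{\overline{\mbox{Ric}}}^{\bar{\varphi}}=0$ rewrites as $\overline{\mbox{Ric}}^{\bar{\varphi}} = \frac{\lambda}{m+d}\,\bar{g}$.

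The tension-field part is immediate: by \eqref{tension field of Phi in funct of tension field of phi}, $\bar{\tau}(\bar{\varphi}) = 0$ if and only if $\tau(\varphi) = d\varphi(\nabla f)$, which is the second equation of \eqref{grad einst type struct on M warp prod harm einst}. For the Einstein-type part, I would split $\overline{\mbox{Ric}}^{\bar{\varphi}} = \frac{\lambda}{m+d}\,\bar{g}$ along the three blocks indicated by \eqref{phi ricci tensor warped prod}. The mixed $(i,\,m+\alpha)$ block vanishes automatically on both sides and produces no constraint. The $(i,j)$ block reads
\begin{equation*}
R^{\varphi}_{ij} + f_{ij} - \frac{1}{d}\, f_i f_j = \frac{\lambda}{m+d}\,\eta_{ij},
\end{equation*}
which is exactly the first equation of \eqref{grad einst type struct on M warp prod harm einst}. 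The fibre $(m+\alpha,\,m+\beta)$ block yields
\begin{equation*}
\frac{\Delta_f f}{d}\,{}^F\eta_{\alpha\beta} + e^{\frac{2f}{d}}\,{}^FR_{\alpha\beta} = \frac{\lambda}{m+d}\,{}^F\eta_{\alpha\beta},
\end{equation*}
equivalently $e^{\frac{2f}{d}}\,{}^FR_{\alpha\beta} = \bigl(\frac{\lambda}{m+d} - \frac{\Delta_f f}{d}\bigr)\,{}^F\eta_{\alpha\beta}$.

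The one genuinely non-mechanical step is the separation-of-variables argument on this last identity: ${}^FR_{\alpha\beta}$ depends only on the $F$-variable, the scalar factor $\frac{\lambda}{m+d} - \frac{\Delta_f f}{d}$ depends only on the $M$-variable, and the components ${}^F\eta_{\alpha\beta}$ in a $g_F$-orthonormal coframe are constants not simultaneously zero. Evaluating at any fixed point of $F$ at a pair $(\alpha,\beta)$ with ${}^F\eta_{\alpha\beta}\neq 0$ forces $e^{-\frac{2f}{d}}\bigl(\frac{\lambda}{m+d} - \frac{\Delta_f f}{d}\bigr)$ to be a constant, which I name $\frac{\Lambda}{d}$. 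Plugging this back produces at once ${}^F\mbox{Ric} = \frac{\Lambda}{d}\,g_F$ (so $(F,g_F)$ is Einstein with scalar curvature ${}^FS = \Lambda$) together with $\Delta_f f - \frac{d}{m+d}\lambda + \Lambda e^{\frac{2f}{d}} = 0$, which is precisely \eqref{Lambda in warp prod second factor einst consnt}. The converse implication is then obtained by running the same computations backwards: granted the three conditions on $(M,g)$, $(F,g_F)$, and $f$, the formulas \eqref{phi ricci tensor warped prod} and \eqref{tension field of Phi in funct of tension field of phi} show directly that every block of $\overline{\mbox{Ric}}^{\bar{\varphi}} - \frac{\lambda}{m+d}\,\bar{g}$ vanishes and that $\bar{\tau}(\bar{\varphi}) = 0$.
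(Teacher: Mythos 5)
Your proof is correct, and its overall architecture coincides with the paper's: unfold the harmonic-Einstein condition on $M\times_u F$ into $\overline{\mbox{Ric}}^{\bar{\varphi}}=\frac{\lambda}{m+d}\bar{g}$ plus $\bar{\tau}(\bar{\varphi})=0$, and read off the base block, the fibre block and the tension equation from \eqref{phi ricci tensor warped prod} and \eqref{tension field of Phi in funct of tension field of phi}. The one place where you genuinely diverge is the step that produces the constant $\Lambda$ and the identity \eqref{Lambda in warp prod second factor einst consnt}. The paper gets it by asserting that a Hamilton-type identity holds for the system \eqref{grad einst type struct on M warp prod harm einst} on the connected base $(M,g)$ (i.e.\ by taking the divergence of the first equation and using the second to show that $e^{-\frac{2f}{d}}\bigl(\frac{d}{m+d}\lambda-\Delta_ff\bigr)$ is constant), a fact it does not prove in the text; only afterwards does it feed this constant into the fibre equation \eqref{F einst man in warp prod second factor} to conclude that $(F,g_F)$ is Einstein with ${}^FS=\Lambda$. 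You instead extract the constancy directly from the fibre block by separation of variables: ${}^FR_{\alpha\beta}$ depends only on the $F$-variable while the scalar factor $e^{-\frac{2f}{d}}\bigl(\frac{\lambda}{m+d}-\frac{\Delta_ff}{d}\bigr)$ depends only on the $M$-variable, so evaluating against a fixed unit direction on $F$ forces that factor to be constant. Your route is more elementary and self-contained (it needs nothing beyond the block formulas already established and the connectedness of $M$ to globalize the local constancy), whereas the paper's route, once the Hamilton-type identity is actually proved, has the advantage of showing that \eqref{Lambda in warp prod second factor einst consnt} is already a consequence of the base system \eqref{grad einst type struct on M warp prod harm einst} alone, independent of the fibre. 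Both arguments are valid and yield the same conclusion, including the converse by reversing the computations.
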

	\begin{proof}
		The warped product $M\times_u F$ is harmonic-Einstein with $\bar{\varphi}$-scalar curvature $\lambda$ if and only if
		\begin{equation}\label{warp prod harm einst}
		\begin{dcases}
		\overline{\mbox{Ric}}^{\bar{\varphi}}=\frac{\lambda}{m+d} \overline{g}\\
		\bar{\tau}(\bar{\varphi})=0.
		\end{dcases}
		\end{equation}
		The first equation above gives, via \eqref{phi ricci tensor warped prod},
		\begin{equation}\label{grad einst type structure with mu equal 1 over d in warp prod harm einst}
		R_{ij}^{\varphi}+f_{ij}-\frac{1}{d}f_if_j=\frac{\lambda}{m+d}\delta_{ij},
		\end{equation}
		and
		\begin{equation}\label{F einst man in warp prod second factor}
		{}^FR_{\alpha\beta}=\left(\frac{\lambda}{m+d}-\frac{\Delta_ff}{d}\right)e^{-\frac{2f}{d}}{}^F\eta_{\alpha\beta},
		\end{equation}
		while the second, using \eqref{tension field of Phi in funct of tension field of phi}, implies $\tau(\varphi)=d\varphi(\nabla f)$. Hence \eqref{grad einst type struct on M warp prod harm einst} holds. It is possible to prove that, since $M$ is connected and even thought $g$ is a pseudo-Riemannian metric, the Hamilton-type identity holds for the system \eqref{grad einst type struct on M warp prod harm einst} holds, that is, \eqref{Lambda in warp prod second factor einst consnt} holds. From \eqref{F einst man in warp prod second factor} we infer
		\begin{equation*}
			{}^FS=\left(\frac{d}{m+d}\lambda-\Delta_ff\right)e^{-\frac{2f}{d}}=\Lambda.
		\end{equation*}
		The converse implication is trivial.
	\end{proof}
	The following is immediate from the above.
	\begin{cor}\label{cor warp prod harm einst con fibra R}
		Let $(M,g)$ be a semi-Riemannian manifold of dimension $m\geq 2$ and $(N,\eta)$ a Riemannian manifold. Let $f\in\mathcal{C}^{\infty}(M)$ and $\varphi:M\to N$ smooth. Set $u$ as
		\begin{equation*}
			u=e^{-f}
		\end{equation*}
		and $\bar{\varphi}:=\varphi\circ \pi_M:\bar{M}\to N$. Let $\alpha\in\mathbb{R}\setminus\{0\}$ and $\lambda\in\mathbb{R}$. Then the following are equivalent
		\begin{itemize}
			\item $\bar{M}:=M\times \mathbb{R}$ is harmonic-Einstein with respect to $\alpha$ and $\bar{\varphi}$, with $\bar{\varphi}$-scalar curvature $\lambda$ and with respect to the metric
			\begin{equation*}
				\bar{g}=g\pm u^2dt\otimes dt,
			\end{equation*}
			where $t$ is the coordinate of $\mathbb{R}$.
			\item $(M,g)$ satisfies
			\begin{equation}\label{grad einst type struct on M warp prod harm einst con fibra R}
			\begin{dcases}
			\mbox{Ric}^{\varphi}+\mbox{Hess}(f)-df\otimes df=\frac{\lambda}{m+1} g\\
			\tau(\varphi)=d\varphi(\nabla f)
			\end{dcases}
			\end{equation}
			and the following equation holds
			\begin{equation}\label{f laplacian for phi static metric}
			\Delta_ff=\frac{\lambda}{m+1},
			\end{equation}
			or equivalently, in terms of $u$,
			\begin{equation*}
				-\Delta u=\frac{\lambda}{m+1}u,
			\end{equation*}
			that is, $u$ is an eigenfunction of $-\Delta$ for the eigenvalue $\frac{\lambda}{m+1}$.
		\end{itemize}
	\end{cor}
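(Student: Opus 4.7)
The plan is to apply \hyperref[thm harm einst warp prod]{Theorem \ref*{thm harm einst warp prod}} with $d = 1$ and fibre $(F, g_F) = (\mathbb{R}, \pm dt \otimes dt)$, and then translate the resulting equations from the language of $f$ to the language of $u$. The dimension constraint $m + d \geq 3$ is compatible with the hypothesis $m \geq 2$ of the corollary.

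First, observe that with $d = 1$ the definition $u = e^{-f/d}$ in \eqref{f in funct of u warp prod ricci} reduces to $u = e^{-f}$, and the warped product metric $\overline{g} = g + u^2 g_F$ becomes $\overline{g} = g \pm u^2 dt\otimes dt$ according to the signature chosen on $\mathbb{R}$. System \eqref{grad einst type struct on M warp prod harm einst} specializes directly to \eqref{grad einst type struct on M warp prod harm einst con fibra R}, since the factor $\tfrac{1}{d}$ becomes $1$.

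Next, $(\mathbb{R}, \pm dt\otimes dt)$ is a one-dimensional semi-Riemannian manifold, hence its Ricci and scalar curvatures vanish identically. In particular it is (trivially) Einstein with $\Lambda = 0$, so the compatibility equation \eqref{Lambda in warp prod second factor einst consnt} collapses to
\begin{equation*}
\Delta_f f - \frac{1}{m+1}\lambda = 0,
\end{equation*}
which is exactly \eqref{f laplacian for phi static metric}. This uses nothing beyond plugging $d=1$ and $\Lambda=0$ into the general statement.

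The only genuine computation left is the passage between $\Delta_f f = \tfrac{\lambda}{m+1}$ and $-\Delta u = \tfrac{\lambda}{m+1} u$. I would carry this out by writing $\nabla u = -u\,\nabla f$, applying $\mbox{div}(v X) = g(\nabla v, X) + v\,\mbox{div}(X)$ to obtain
\begin{equation*}
\Delta u = -g(\nabla u, \nabla f) - u\,\Delta f = u\,|\nabla f|^2 - u\,\Delta f = -u\,\Delta_f f,
\end{equation*}
recalling from \eqref{transf law ricci phi con f} that $\Delta_f f = \Delta f - |\nabla f|^2$. The equivalence of the two forms of the last equation is then immediate, and the converse implication is trivial since each hypothesis of the corollary translates verbatim back into the corresponding hypothesis of \hyperref[thm harm einst warp prod]{Theorem \ref*{thm harm einst warp prod}}. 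There is no real obstacle here; the statement is essentially a bookkeeping corollary of the preceding theorem once one recognizes that a $1$-dimensional factor is automatically Einstein with $\Lambda = 0$.
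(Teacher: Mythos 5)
Your proposal is correct and matches the paper's intent exactly: the paper states this corollary as ``immediate from the above,'' i.e.\ as the specialization $d=1$, $(F,g_F)=(\mathbb{R},\pm dt\otimes dt)$ of Theorem \ref{thm harm einst warp prod}, with the one-dimensional fibre trivially Einstein with $\Lambda=0$. Your computation $\Delta u=-u\,\Delta_f f$ correctly supplies the only nontrivial bookkeeping step (the equivalence of $\Delta_f f=\tfrac{\lambda}{m+1}$ with $-\Delta u=\tfrac{\lambda}{m+1}u$), which the paper leaves implicit.
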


	\begin{dfn}\label{def phi static}
		We say that a semi-Riemannian manifold $(M,g)$ of dimension $m\geq 2$ is {\em $\varphi$-static harmonic-Einstein} with respect to a smooth map $\varphi:M\to N$, where $(N,\eta)$ is a target Riemannian manifold, and $\alpha\in\mathbb{R}\setminus\{0\}$ if it satisfies one the equivalent conditions of the Proposition above for some $f\in\mathcal{C}^{\infty}(M)$ and $\lambda\in\mathbb{R}$.
	\end{dfn}

	\begin{rmk}
		If $(M,g)$ is $\varphi$-static harmonic-Einstein then it has constant $\varphi$-scalar curvature given by
		\begin{equation*}
			S^{\varphi}=\frac{m-1}{m+1}\lambda.
		\end{equation*}
		The validity of the above follows easily taking the trace of the first equation in \eqref{grad einst type struct on M warp prod harm einst con fibra R} and plugging by \eqref{f laplacian for phi static metric} into it.
	\end{rmk}

		Recall that a four dimensional Lorentzian manifold $(\bar{M},\bar{g})$ of dimension is called {\em static spacetime} if it admits a timelike and irrotational Killing vector field $K$. Locally any static spacetime is isometric to $M\times \mathbb{R}$ endowed with the metric $g-e^{-2f}dt\otimes dt$, where $(M,g)$ is a three-dimensional Riemannian manifold, $f\in\mathcal{C}^{\infty}(M)$ and $t$ is the coordinate in $\mathbb{R}$ where $K$ is given by $\partial/\partial t$ (see \cite{W} for details). 
		
		Furthermore, let $(N,\eta)$ be a Riemannian manifold and $\bar{\varphi}:(\bar{M},\bar{g})\to (N,\eta)$ a smooth map.
		\begin{dfn}
			We say that $(\bar{M},\bar{g})$ is a {\em $\bar{\varphi}$-static spacetime} if it admits a timelike and irrotational Killing vector field $K$ such that $d\bar{\varphi}(K)=0$.
		\end{dfn} 
		\begin{rmk}
			It is clear that in a (connected) neighborhood of each point any static spacetime is isometric to $M\times \mathbb{R}$ endowed with the metric $g-e^{-2f}dt\otimes dt$, where $(M,g)$ is a three-dimensional Riemannian manifold, $f\in\mathcal{C}^{\infty}(M)$ and $t$ is the coordinate in $\mathbb{R}$, but it can also be proved that in that neighborhood $\bar{\varphi}$ is given by the lifing of a time independent map $\varphi:M\to N$, i.e., $\bar{\varphi}=\varphi\circ \pi_M$ where $\pi_M:M\times \mathbb{R}\to M$ is the canonical projection.
		\end{rmk}
	\begin{rmk}
		The name $\varphi$-static harmonic-Einstein in \hyperref[def phi static]{Definition \ref*{def phi static}} is justified by the fact that, if $(M,g)$ is a three dimensional $\varphi$-static Riemannian manifold with respect to $\alpha$ given by \eqref{alpha per eq campo einstein}, then the four dimensional manifold $\bar{M}:=M\times \mathbb{R}$ endowed with the Lorentzian metric $\bar{g}:=g-{e^{-2f}}dt\otimes dt$, where $t\in\mathbb{R}$ represent the time, is a $\bar{\varphi}$-static spacetime, where $\bar{\varphi}=\varphi\circ \pi_M$ and $\pi_M:\bar{M}\to M$ is the canonical projection, that solves the Einstein field equations with source the wave map $\bar{\varphi}$. This fact follows easily from \hyperref[rmk sol eq Einstein]{Remark \ref*{rmk sol eq Einstein}}.
	\end{rmk}

	\section{Variational characterizations}\label{section variations}
	
	Let $M$ be a closed, i.e. a compact smooth manifold without boundary, and orientable manifold of dimension $m\geq 2$. We denote by $\mathcal{M}$ the set of all the Riemannian metrics of $M$.
	\begin{rmk}
		All the results of this Section can be extended to non-compact manifolds by considering compactly supported variations, but for simplicity we deal with closed and orientable manifolds in this work.
	\end{rmk}
	
	Let $g\in \mathcal{M}$, the volume form associated to $g$ is denoted by $\mu_g$ and is locally given by
	\begin{equation*}
		\mu_g=\sqrt{\mbox{det}(g_{ij})_{ij}}dx^1\wedge\ldots\wedge dx^m,
	\end{equation*}
	where $(x^1,\ldots ,x^m)$ are local coordinates on an open subset $\mathcal{U}$ of $M$ and $g$ is, in turn, locally given by
	\begin{equation*}
		g=g_{ij}dx^i\otimes dx^j.
	\end{equation*}
	We set
	\begin{equation*}
		\mbox{vol}_g(M):=\int_M\mu_g.
	\end{equation*}
	
	Recall that for every $g\in\mathcal{M}$ the tangent space $T_g\mathcal{M}$ of $\mathcal{M}$ at $g$ can be identified with $S^2(M)$, the set of two-times covariant tensor fields on $M$. The identification is the following: for $g\in\mathcal{M}$ and $h\in S^2(M)$ we define
	\begin{equation}\label{def di g t}
		g_t:=g+th \quad \mbox{ for } t\in(-\varepsilon,\varepsilon),
	\end{equation}
	where $\varepsilon>0$ is sufficient small so that $g_t$ is positive definite for every $t\in(-\varepsilon,\varepsilon)$. Then $g_0=g$ and
	\begin{equation*}
		\left.\frac{d}{dt}\right|_{t=0}g_t=h.
	\end{equation*}
	In a local chart the components of $g_t$ are given by
	\begin{equation*}
	g_{ij}(t)=g_{ij}+th_{ij}, \quad g_t=g_{ij}(t)dx^i\otimes dx^j,
	\end{equation*}
	where
	\begin{equation*}
		h=h_{ij}dx^i\otimes dx^j.
	\end{equation*}
	Clearly
	\begin{equation}\label{der di g t e h}
	\dot{g}=\left.\frac{d}{dt}\right|_{t=0}g_t=h,
	\end{equation}
	locally,
	\begin{equation*}
	\dot{g}=\dot{g}_{ij}dx^i\otimes dx^j, \quad \dot{g}_{ij}=h_{ij}.
	\end{equation*}
	
	We denote by $\mathcal{F}$ the set of all the smooth maps $\varphi:M\to (N,\eta)$, where the target $(N,\eta)$ is a fixed Riemannian manifold and we fix $\alpha\in\mathbb{R}\setminus\{0\}$. The results on the following Proposition are well known or follows easily from well known results. We sketch their proof for the sake of completeness.
	\begin{prop}
		Let $g\in\mathcal{M}$, $\varphi\in\mathcal{F}$ and $\alpha\in\mathbb{R}\setminus\{0\}$. Let $h\in S^2(M)$ and $g_t$ as in \eqref{def di g t}.
		\begin{itemize}
			\item Let $(g^{ij}(t))_{ij}$ be the inverse matrix of $(g_{ij}(t))_{ij}$. Then
			\begin{equation}\label{var inversa metrica g}
			\dot{g}^{ij}=\left.\frac{d}{dt}\right|_{t=0}g^{ij}(t)=-h^{ij},
			\end{equation}
			where the indexes of $h$ are raised with the aid of the metric $g$.
			\item The variation of the Riemannian volume element in the direction $h$ is given by
			\begin{equation}\label{var vol form}
			\dot{\mu}_{g}:=\left.\frac{d}{dt}\right|_{t=0}\mu_{g_t}=\frac{1}{2}\mbox{tr}_{g}(h)\mu_{g},
			\end{equation}
			\item The variation of the volume in the direction $h$ is given by
			\begin{equation}\label{variazione volume}
			\dot{vol}_{g}(M):=\left.\frac{d}{dt}\right|_{t=0}\mbox{vol}_{g_t}(M)=\frac{1}{2}\int_M\mbox{tr}_{g}(h)\mu_{g}.
			\end{equation}
			\item The variation of the $\varphi$-Ricci tensor is given by
			\begin{equation}\label{variation phi ricci}
				\dot{R}^{\varphi}_{ij}=\left.\frac{d}{dt}\right|_{t=0}R^{\varphi}_{ij}(t)=\frac{1}{2}g^{pq}(\nabla_q\nabla_jh_{ip}-\nabla_i\nabla_jh_{pq}+\nabla_q\nabla_ih_{jp}-\nabla_q\nabla_ph_{ij}).
			\end{equation}
			where
			\begin{equation*}
				\mbox{Ric}^{\varphi}_{g_t}=R^{\varphi}_{ij}(t)dx^i\otimes dx^j, \quad \nabla^{g}(\nabla^{g} h)=\nabla_k\nabla_th_{ij}dx^k\otimes dx^t\otimes dx^i\otimes dx^j.
			\end{equation*}
			\item The variation of the density of energy of $\varphi$ in the direction $h$ is given by
			\begin{equation}\label{variation of energy density}
			\dot{e^{g}(\varphi)}:=\left.\frac{d}{dt}\right|_{t=0}e^{g_t}(\varphi)=-\frac{1}{2}\langle h,\varphi^*\eta\rangle_{g}.
			\end{equation}
			\item The variation of the $\varphi$-scalar curvature in the direction $h$ is given by
			\begin{equation}\label{var phi scalar curvature}
			\dot{S}^{\varphi}_{g}:=\left.\frac{d}{dt}\right|_{t=0}S^{\varphi}_{g_t}=-\Delta_{g}(\mbox{tr}_{g}(h))+\mbox{div}_{g}(\mbox{div}_{g}(h))-\langle h,\mbox{Ric}_{g}^{\varphi}\rangle_{g}.
			\end{equation}
		\end{itemize}
	\end{prop}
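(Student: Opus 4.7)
The plan is to tackle the six items in sequence, since each later item relies on earlier ones or on a parallel standard computation. All six quantities depend on $t$ only through the metric $g_t=g+th$ (with $\varphi$ fixed), so every $t$-derivative comes down to differentiating the linear family $g_{ij}(t)=g_{ij}+th_{ij}$.

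First I would prove \eqref{var inversa metrica g} by differentiating the identity $g^{ik}(t)g_{kj}(t)=\delta^i_j$ at $t=0$, getting $\dot{g}^{ik}g_{kj}=-g^{ik}h_{kj}$, hence $\dot{g}^{ij}=-h^{ij}$ after raising the remaining index. For \eqref{var vol form} I would use Jacobi's formula $\frac{d}{dt}\det(g_{ij}(t))=\det(g_{ij}(t))\,\mathrm{tr}\bigl(g^{-1}(t)\dot{g}(t)\bigr)$; at $t=0$ this gives $\frac{d}{dt}\big|_{t=0}\sqrt{\det g_{ij}(t)}=\tfrac12 \sqrt{\det g}\cdot g^{ij}h_{ij}=\tfrac12\mathrm{tr}_g(h)\sqrt{\det g}$, from which \eqref{var vol form} follows by the local expression of $\mu_g$. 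Item \eqref{variazione volume} is then obtained by integrating \eqref{var vol form} over $M$ and exchanging the derivative with the integral, which is legitimate since the family is smooth in $t$ and $M$ is compact.

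For \eqref{variation phi ricci} the key observation is that $\varphi^*\eta$ is intrinsically defined by $\varphi$ alone, so it does not depend on $t$; hence $\dot R^{\varphi}_{ij}=\dot R_{ij}$ by the definition \eqref{def phi Ricci}. Thus I only need the classical variation formula for the Ricci tensor, which I would derive in the standard way: first compute the variation of the Christoffel symbols,
\begin{equation*}
\dot{\Gamma}^k_{ij}=\tfrac12 g^{kp}\bigl(\nabla_i h_{jp}+\nabla_j h_{ip}-\nabla_p h_{ij}\bigr),
\end{equation*}
which is tensorial even though $\Gamma$ is not, then use the Palatini-type identity $\dot R_{ij}=\nabla_k\dot{\Gamma}^k_{ij}-\nabla_j\dot{\Gamma}^k_{ik}$, and finally substitute and relabel indices to obtain the symmetric form stated in \eqref{variation phi ricci}. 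Item \eqref{variation of energy density} is immediate from $e^{g_t}(\varphi)=\tfrac12 g^{ij}(t)(\varphi^*\eta)_{ij}$ together with \eqref{var inversa metrica g}: differentiating gives $\dot{e^g(\varphi)}=\tfrac12\dot g^{ij}(\varphi^*\eta)_{ij}=-\tfrac12\langle h,\varphi^*\eta\rangle_g$.

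For the final item \eqref{var phi scalar curvature} I would combine the previous pieces. Since $S^{\varphi}_{g_t}=g^{ij}(t)R^{\varphi}_{ij}(t)$, Leibniz gives $\dot S^{\varphi}=\dot g^{ij}R^{\varphi}_{ij}+g^{ij}\dot R^{\varphi}_{ij}=-\langle h,\mathrm{Ric}^{\varphi}_g\rangle_g+g^{ij}\dot R^{\varphi}_{ij}$. Contracting \eqref{variation phi ricci} with $g^{ij}$ and commuting covariant derivatives up to curvature terms that cancel by symmetry yields the two divergence terms: the trace of $\nabla_q\nabla_p h_{ij}$ against $g^{ij}g^{pq}$ produces $\Delta_g(\mathrm{tr}_g h)$, while the mixed contractions produce $\mathrm{div}_g(\mathrm{div}_g h)$ (twice, halved by the prefactor $\tfrac12$). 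Alternatively, and perhaps more cleanly, I would use the splitting $S^{\varphi}=S-\alpha|d\varphi|^2_g=S-2\alpha\, e^g(\varphi)$ together with the classical variation of scalar curvature $\dot S=-\Delta_g\mathrm{tr}_g(h)+\mathrm{div}_g\mathrm{div}_g(h)-\langle h,\mathrm{Ric}_g\rangle_g$ and \eqref{variation of energy density}; the correction $-2\alpha\dot{e^g(\varphi)}=\alpha\langle h,\varphi^*\eta\rangle_g$ is precisely what converts $\mathrm{Ric}_g$ into $\mathrm{Ric}^{\varphi}_g$. The main technical point is the variation-of-Ricci computation in item 4, where bookkeeping of the four second-covariant-derivative terms must be done carefully; everything else is linear algebra or direct substitution.
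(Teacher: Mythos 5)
Your proposal is correct and follows essentially the same route as the paper: the paper simply cites Besse (Prop.\ 1.186 and 1.174(d) of \cite{B}) for the variations of $\mu_g$ and $\mbox{Ric}$, notes that $\alpha\varphi^*\eta$ is $g$-independent to pass to $\mbox{Ric}^{\varphi}$, obtains \eqref{variation of energy density} from \eqref{var inversa metrica g}, and then gets \eqref{var phi scalar curvature} exactly by your second, ``cleaner'' route via $S^{\varphi}=S-2\alpha e^g(\varphi)$. The only difference is that you derive the cited classical formulas from scratch (Jacobi's formula, Palatini identity), which is fine and consistent with the stated conventions.
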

	\begin{proof}
		The validity of \eqref{var inversa metrica g} is trivial. For the proof of \eqref{var vol form} and \eqref{variazione volume} see Proposition 1.186 of \cite{B}. Recall 1.1774 (d) of \cite{B}:
		\begin{equation*}
		\dot{R}_{ij}=\left.\frac{d}{dt}\right|_{t=0}R_{ij}(t)=\frac{1}{2}g^{pq}(\nabla_q\nabla_jh_{ip}-\nabla_i\nabla_jh_{pq}+\nabla_q\nabla_ih_{jp}-\nabla_q\nabla_ph_{ij}),
		\end{equation*}
		where locally
		\begin{equation*}
			\mbox{Ric}_{g_t}=R_{ij}(t)dx^i\otimes dx^j.
		\end{equation*}
		Since $\alpha\varphi^*\eta$ does not depend on the metric $g$ on $M$, the above gives the validity of \eqref{variation phi ricci}.
		
		The above equation gives
		\begin{equation}\label{var scalar curvature}
		\dot{S}_{g}:=\left.\frac{d}{dt}\right|_{t=0}S_{g_t}=-\Delta_{g}(\mbox{tr}_{g}(h))+\mbox{div}_{g}(\mbox{div}_{g}(h))-\langle h,\mbox{Ric}_{g}\rangle_{g},
		\end{equation}
		where we denoted by $\langle \,,\,\rangle_{g}$ the extension of $g$ to the bundle of the two-times covariant symmetric tensors on $M$, that is locally given by
		\begin{equation*}
		\langle h,\mbox{Ric}_{g}\rangle_{g}=g^{ij}h_{ij}R_{ij}.
		\end{equation*}
		
		Notice that, if we choose local coordinates $y^1,\ldots ,y^n$ on a open subset $\mathcal{V}$ of $N$ such that $\varphi(\mathcal{U})\subset \mathcal{V}$, then $|d\varphi|^2_{g_t}$ is locally given by
		\begin{equation}\label{local form d phi quadro}
		|d\varphi|^2_{g_t}=g^{ij}(t)\eta_{ab}\varphi^a_i\varphi^b_j,
		\end{equation}
		where $\eta$ is locally given by
		\begin{equation*}
		\eta_{ab}dy^a\otimes dy^b.
		\end{equation*}
		Using \eqref{local form d phi quadro} and \eqref{var inversa metrica g} we easily get \eqref{variation of energy density}. Combining \eqref{var scalar curvature} and \eqref{variation of energy density} and recalling the definition of the $\varphi$-Ricci tensor we obtain the variation formula \eqref{var phi scalar curvature} for the $\varphi$-scalar curvature.
	\end{proof}
	
	On the other hand we may vary the map $\varphi\in\mathcal{F}$. The tangent space $T_{\varphi}\mathcal{F}$ of $\mathcal{F}$ at $\varphi$ can be identified with $\Gamma(\varphi^{-1}TN)$, the set of smooth sections of $\varphi^{-1}TN$. The identification is the following: let $v$ be a smooth section of $\varphi^{-1}TN$. We define
	\begin{equation*}
		\Phi:M\times (-\varepsilon,\varepsilon)\to N, \quad \Phi(x,t)=\exp^N_{\varphi(x)}(tv_x),
	\end{equation*}
	where $\exp^N_y:T_yN\to N$ denotes the exponential map of $(N,\eta)$ at $y\in N$ and $\varepsilon>0$ is sufficiently small. Then, by setting
	\begin{equation}\label{def di d phi t}
		\varphi_t:=\Phi(\cdot,t)
	\end{equation}
	for every $t\in (-\varepsilon,\varepsilon)$, we have $\varphi_0=\varphi$ and
	\begin{equation*}
	\left.\frac{d}{dt}\right|_{t=0}\varphi_t=v.
	\end{equation*}
	
	Recall that the total energy of $\varphi$ is given by
	\begin{equation}\label{def energia tot}
		E^g(\varphi):=\int_Me^g(\varphi)\mu_g=\frac{1}{2}\int_M|d\varphi|^2_g\mu_g,
	\end{equation}
	where $e^g(\varphi)$ is the density of energy of $\varphi$, defined in \eqref{def dens en} while the total bi-energy of $\varphi$ is given by
	\begin{equation}\label{def bi energia totale}
		E_2^g(\varphi):=\int_Me_2^g(\varphi)\mu_g=\frac{1}{2}\int_M|\tau^g(\varphi)|^2\mu_g,
	\end{equation}
	where $e_2^g(\varphi)$ is the density of bi-energy of $\varphi$, defined in \eqref{def dens bi-ener}.
	
	The results on the following Proposition are well known. We sketch their proof for completeness and to show how the method of the moving frame makes computation easier.
	\begin{prop}\label{prop energia tot e bienergia punti critici}
		Let $g\in\mathcal{M}$, $\varphi\in\mathcal{F}$ and $\alpha\in\mathbb{R}\setminus\{0\}$. Let $h\in S^2(M)$ and $g_t$ as in \eqref{def di g t}.
		\begin{itemize}
			\item The variation of the energy of $\varphi$ in the direction $h$ is given by
			\begin{equation}\label{var energ rispetto metrica}
			\dot{E^{g}(\varphi)}=\left.\frac{d}{dt}\right|_{t=0}E^{g_t}(\varphi)=-\frac{1}{2}\int_M\langle T^{g}, h\rangle_{g}\mu_{g},
			\end{equation}
			where $T^g$ is the energy stress tensor \eqref{stress energy tensor} of the map $\varphi:(M,g)\to (N,\eta)$.
			\item The variation of the bi-energy of $\varphi$ in the direction $h$ is given by
			\begin{equation}\label{var bi energ rispetto metrica}
			\dot{E_2^{g}(\varphi)}=\left.\frac{d}{dt}\right|_{t=0}E_2^{g_t}(\varphi)=\frac{1}{2}\int_M\langle T_2^{g}, h\rangle_{g}\mu_{g},
			\end{equation}
			where, in a local orthonormal coframe for $g$,
			\begin{equation*}
			(T_2^g)_{ij}=\tau(\varphi)^a_i\varphi^a_j+\tau(\varphi)^a_j\varphi^a_i-\left(e_2(\varphi)+\tau(\varphi)^a_k\varphi^a_k\right)\delta_{ij}.
			\end{equation*}
		\end{itemize}
		Let $v\in \Gamma(\varphi^{-1}TN)$ and $\varphi_t$ such that \eqref{def di d phi t} holds.
		\begin{itemize}
			\item The variation of total energy of $\varphi$ in the direction $v$ is given by
			\begin{equation}\label{var energia se cambio mappa}
			\left.\frac{d}{dt}\right|_{t=0}E^{g}(\varphi_t)=-\int_M (\tau^g(\varphi),v)\mu_{g},
			\end{equation}
			where $(\,,\,)$ denotes the inner product on $\varphi^{-1}TN$ and $\tau^g(\varphi)$ is the tension field \eqref{def tension field} of the map $\varphi:(M,g)\to (N,\eta)$.
			\item The variation of total bi-energy of $\varphi$ in the direction $v$ is given by
			\begin{equation}\label{var bi energ rispetto mappa}
			\left.\frac{d}{dt}\right|_{t=0}E_2^{g}(\varphi_t)=\int_M (\tau^g_2(\varphi),v)\mu_{g},
			\end{equation}
			where $\tau_2^g(\varphi)$ is the bi-tension field of the map $\varphi:(M,g)\to (N,\eta)$, given by \eqref{def bi tension}.
		\end{itemize}
	\end{prop}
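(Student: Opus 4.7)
The proposition collects four first-variation formulas splitting into variations with respect to the metric ($h\in S^2(M)$, items 1 and 2) and variations with respect to the map ($v\in\Gamma(\varphi^{-1}TN)$, items 3 and 4). In each case the strategy is the same: differentiate the integrand, substitute the pointwise variation formulas already proved in the previous proposition, and integrate by parts on the closed, orientable $M$ until the result is of the form $\int_M\langle\cdot,h\rangle_g\mu_g$ or $\int_M(\cdot,v)\mu_g$; the tensor or section paired with $h$ or $v$ is then the desired gradient.

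For item 1, the product rule applied to $E^g(\varphi)=\int_Me^g(\varphi)\mu_g$ together with \eqref{variation of energy density} and \eqref{var vol form} gives
\begin{equation*}
\left.\tfrac{d}{dt}\right|_{t=0}E^{g_t}(\varphi)=\int_M\left(-\tfrac{1}{2}\langle h,\varphi^*\eta\rangle_g+\tfrac{1}{2}e^g(\varphi)\,\mathrm{tr}_g(h)\right)\mu_g=-\tfrac{1}{2}\int_M\langle h,\varphi^*\eta-\tfrac{|d\varphi|^2}{2}g\rangle_g\mu_g,
\end{equation*}
and the last bracket equals $T^g$ by \eqref{stress energy tensor}. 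For item 3, since the target metric $\eta$ is independent of $t$, $\left.\tfrac{d}{dt}\right|_{t=0}|d\varphi_t|^2_g=2\eta_{ab}g^{ij}(\nabla v)^a_i\varphi^b_j$, with $\nabla v\in\Gamma(T^*M\otimes\varphi^{-1}TN)$ the covariant derivative of $v$ along $\varphi$; integration by parts on the closed $M$ then yields $-\int_M(v,\tau^g(\varphi))\mu_g$. Item 4 is Jiang's classical first variation of the bi-energy: the derivative $\left.\tfrac{d}{dt}\right|_{t=0}\tau^g(\varphi_t)$ is the Jacobi operator $J_v$ along $\varphi$, and pairing with $\tau^g(\varphi)$ and integrating by parts twice transfers the rough Laplacian across and leaves precisely $\tau_2^g(\varphi)^a=\varphi^a_{iijj}-{}^NR^a_{bcd}\varphi^b_i\varphi^c_i\varphi^d_{jj}$ from \eqref{def bi tension} paired with $v$.

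The main obstacle is item 2, because $\tau^g(\varphi)^a=g^{ij}\varphi^a_{ij}$ depends on $g$ both through $g^{ij}$ and through the Christoffel symbols of $g$ implicit in $\varphi^a_{ij}$. Using \eqref{var inversa metrica g} and the standard identity $\dot{\Gamma^k_{ij}}=\tfrac{1}{2}g^{kt}(\nabla_ih_{jt}+\nabla_jh_{it}-\nabla_th_{ij})$, I obtain
\begin{equation*}
\left.\tfrac{d}{dt}\right|_{t=0}\tau^{g_t}(\varphi)^a=-h^{ij}\varphi^a_{ij}-\mathrm{div}_g(h)^k\varphi^a_k+\tfrac{1}{2}d\varphi(\nabla\mathrm{tr}_g h)^a.
\end{equation*}
Pairing with $\tau^g(\varphi)$, adding the contribution $\tfrac{1}{4}\int_M|\tau^g(\varphi)|^2\mathrm{tr}_g(h)\mu_g$ coming from $\dot\mu_g$, and integrating by parts: the $\mathrm{div}_g(h)$-term yields $\int_Mh^{jk}(\tau^a_j\varphi^a_k+\tau^a\varphi^a_{jk})\mu_g$, whose second summand cancels $-\int_Mh^{ij}\tau^a\varphi^a_{ij}\mu_g$; the $\nabla\mathrm{tr}_g h$-term yields $-\tfrac{1}{2}\int_M\langle h,(\tau^a_k\varphi^a_k+|\tau|^2)g\rangle_g\mu_g$. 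After symmetrizing in $(j,k)$ via the symmetry of $h$, the surviving terms assemble into $\tfrac{1}{2}\int_M\langle h,T_2^g\rangle_g\mu_g$ with exactly the stated $T_2^g$. The delicate point is precisely this double cancellation: the a priori first-derivative terms in $h$ conspire into a manifestly symmetric zero-order tensor, and any sign or convention slip will break the cancellation.
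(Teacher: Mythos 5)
Your proposal is correct and follows essentially the same route as the paper: items 1 and 3 are the same direct substitution of the pointwise variation formulas plus one integration by parts, item 2 uses the identical formula for $\dot{\Gamma}^k_{ij}$, the identical expression for $\left.\tfrac{d}{dt}\right|_{t=0}\tau^{g_t}(\varphi)^a$, and the same cancellation of the $h_{ij}\varphi^a_{ij}\tau^a$ terms leading to the stated $T_2^g$, and item 4 is the same Jiang-type computation (the paper derives the Jacobi-operator formula for $\left.\tfrac{d}{dt}\right|_{t=0}\tau^g(\varphi_t)$ explicitly via the commutation rule on $(-\varepsilon,\varepsilon)\times M$ before integrating by parts twice). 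No gaps.
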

	\begin{proof}
		The validity of \eqref{var energ rispetto metrica} follows from the definition of energy of $\varphi$ and the formulas \eqref{variation of energy density} and \eqref{var vol form}, since
		\begin{equation*}
			\left.\frac{d}{dt}\right|_{t=0}\int_Me^{g_t}(\varphi)\mu_{gt}=\int\left(\left.\frac{d}{dt}\right|_{t=0}e^{g_t}(\varphi)\right)\mu_g+\int_Me^{g}(\varphi)\left.\frac{d}{dt}\right|_{t=0}\mu_{g_t}.
		\end{equation*}
		
		Recall that
		\begin{equation*}
		\nabla_{\frac{\partial}{\partial x^i}}\frac{\partial}{\partial x^j}=\Gamma^k_{ij}\frac{\partial}{\partial x^k}
		\end{equation*}
		and
		\begin{equation*}
		\Gamma^k_{ij}=\frac{1}{2}g^{tk}\left(\frac{\partial g_{tj}}{\partial x^i}+\frac{\partial g_{ti}}{\partial x^j}-\frac{\partial g_{ij}}{\partial x^t}\right),
		\end{equation*}
		Notice that $\Gamma^k_{ij}$ does not define a tensor while $\dot{\Gamma}^k_{ij}$ does, where
		\begin{equation}\label{var christoffel symbols}
		\dot{\Gamma}^k_{ij}=\frac{1}{2}g^{tk}(\nabla_ih_{tj}+\nabla_jh_{it}-\nabla_th_{ij}).
		\end{equation}
		Recall that the components of the generalized second fundamental form of $\varphi$ are given by
		\begin{equation}
		\nabla d\varphi^a_{ij}=\frac{\partial^2\varphi^a}{\partial x^i\partial x^j}-\Gamma^k_{ij}\frac{\partial \varphi^a}{\partial x^k}+{}^N\Gamma^a_{bc}\frac{\partial \varphi^b}{\partial x^i}\frac{\partial \varphi^c}{\partial x^j}.
		\end{equation}
		Using \eqref{var christoffel symbols} we get
		\begin{equation}\label{var gen sec fund form}
		\dot{\nabla d\varphi}^a_{ij}=-\dot{\Gamma}^k_{ij}\frac{\partial \varphi^a}{\partial x^k}=-\frac{1}{2}g^{tk}(\nabla_ih_{tj}+\nabla_jh_{it}-\nabla_th_{ij})\frac{\partial \varphi^a}{\partial x^k}.
		\end{equation}
		The components of the tension field of $\varphi$ are given by
		\begin{equation*}
		\tau^g(\varphi)^a=g^{ij}\nabla d\varphi^a_{ij}.
		\end{equation*}
		Using \eqref{var inversa metrica g} and \eqref{var gen sec fund form}
		\begin{equation*}
		\dot{\tau^g(\varphi)}^a=\dot{g}^{ij}\nabla d\varphi^a_{ij}+g^{ij}\dot{\nabla d\varphi}^a_{ij}=-h^{ij}\nabla d\varphi^a_{ij}-\frac{1}{2}g^{tk}g^{ij}(\nabla_ih_{tj}+\nabla_jh_{it}-\nabla_th_{ij})\frac{\partial \varphi^a}{\partial x^k}
		\end{equation*}
		In a local orthonormal coframe for $g$ the above equation reads
		\begin{equation*}
		\dot{\tau^g(\varphi)}^a=-h_{ij}\varphi^a_{ij}-\frac{1}{2}(2h_{ki,i}-h_{ii,k})\varphi^a_k
		\end{equation*}	
		Hence
		\begin{equation*}
		\dot{e_2^g(\tau)}=\tau^g(\varphi)^a\dot{\tau^g(\varphi)}^a=-h_{ij}\varphi^a_{ij}\tau(\varphi)^a-h_{ki,i}\tau(\varphi)^a\varphi^a_k+\frac{1}{2}h_{ii,k}\tau(\varphi)^a\varphi^a_k.
		\end{equation*}
		Then, using \eqref{var vol form},
		\begin{equation}
		\dot{E^g_2(\varphi)}=\int_M\dot{e^g_2(\tau)}\mu_g+\int_Me^g_2(\tau)\dot{\mu_g}=\int_M\left(\dot{e^g_2(\tau)}+\frac{1}{2}e^g_2(\tau)\mbox{tr}_g(h)\right)\mu_g.
		\end{equation}
		Notice that
		\begin{align*}
		\dot{e^g_2(\tau)}+\frac{1}{2}e^g_2(\tau)\langle h, g\rangle_g=&-h_{ij}\varphi^a_{ij}\tau^g(\varphi)^a-h_{ki,i}\tau^g(\varphi)^a\varphi^a_k+\frac{1}{2}h_{ii,k}\tau^g(\varphi)^a\varphi^a_k+\frac{1}{2}e^g_2(\varphi)h_{ij}\delta_{ij}\\
		=&-h_{ij}\varphi^a_{ij}\tau^g(\varphi)^a+h_{ki}(\tau^g(\varphi)^a\varphi^a_k)_i-\frac{1}{2}h_{ii}(\tau^g(\varphi)^a\varphi^a_k)_k+\frac{1}{2}e^g_2(\varphi)h_{ij}\delta_{ij}+\ldots\\
		=&h_{ij}\left(\tau^g(\varphi)^a_i\varphi^a_j+\frac{1}{2}(e^g_2(\varphi)-\tau^g(\varphi)^a_k\varphi^a_k-|\tau^g(\varphi)|^2)\delta_{ij}\right)+\ldots\\
		=&h_{ij}\left(\tau^g(\varphi)^a_i\varphi^a_j-\frac{1}{2}(e^g_2(\varphi)+\tau^g(\varphi)^a_k\varphi^a_k)\delta_{ij}\right)+\ldots
		\end{align*}
		where with the lower dots we denote divergences terms. Then, integrating by parts we obtain \eqref{var bi energ rispetto metrica}.
		
		Now we deal with variations of $\varphi$. Clearly
		\begin{equation}
			\left.\frac{d}{dt}\right|_{t=0}e^g(\varphi_t)=\varphi^a_i\left.\frac{d}{dt}\right|_{t=0}(\varphi_t)^a_i,
		\end{equation}
		hence exchanging the covariant derivatives of $\Phi:(-\varepsilon,\varepsilon)\times M\to N$, where $\Phi(t,x)=\varphi_t(x)$, we have
		\begin{equation*}
			\left.\frac{d}{dt}\right|_{t=0}e^g(\varphi_t)=\varphi^a_iv^a_i=(\varphi^a_iv^a)_i-\varphi^a_{ii}v^a.
		\end{equation*}
		Integrating the above, using the divergence theorem, we conclude the validity of \eqref{var energia se cambio mappa}.
		
		Finally
		\begin{equation}
			\left.\frac{d}{dt}\right|_{t=0}e_2^g(\varphi_t)=\tau(\varphi)^a\left.\frac{d}{dt}\right|_{t=0}\tau(\varphi_t)^a.
		\end{equation}
		Using \eqref{comm rule der terza phi} for the map $\Phi$, since the components $\bar{R}_{\alpha \beta \gamma \delta}$, for $1\leq \alpha,\beta,\ldots \leq m+1$, of the Riemann tensor of $\bar{M}=(-\varepsilon,\varepsilon)\times M$ satisfies (it can be seen using \eqref{comp riem warp prod} with $u\equiv 1$)
		\begin{equation*}
			\bar{R}_{kj \, m+1\, t}=0, \quad \bar{R}_{m+1\, j\, m+1\, t}=0, \quad \bar{R}_{kjst}=R_{kjst},
		\end{equation*}
		we obtain
		\begin{align*}
			\frac{d}{dt}(\varphi_t)^a_{ij}=&\Phi^a_{ij\, m+1}=\Phi^a_{i\, m+1\, j}-\bar{R}^{\alpha}_{i \, m+1\, j}\Phi^a_{\alpha}+{}^NR^a_{bcd}\Phi^b_i\Phi^c_{m+1}\Phi^d_j\\
			=&\frac{d}{dt}(\varphi_t)^a_{ij}+{}^NR^a_{bcd}(\varphi_t)^b_i\frac{d}{dt}(\varphi_t)^c(\varphi_t)^d_j,
		\end{align*}
		hence, evaluating at $t=0$,
		\begin{equation*}
			\left.\frac{d}{dt}\right|_{t=0}(\varphi_t)^a_{ij}=v^a_{ij}+{}^NR^a_{bcd}\varphi^b_iv^c\varphi^d_j
		\end{equation*}
		Tracing the above we conclude
		\begin{equation*}
			\left.\frac{d}{dt}\right|_{t=0}\tau^g(\varphi_t)^a=v^a_{ii}+{}^NR^a_{bcd}\varphi^b_i\varphi^d_jv^c.
		\end{equation*}
		Then we infer
		\begin{equation*}
			\frac{d}{dt}E_2^{g}(\varphi_t)=\int_M(v^a_{ii}+{}^NR^a_{bcd}(\varphi_t)^b_i(\varphi_t)^d_jv^c)\tau(\varphi_t)^a\mu_g,
		\end{equation*}
		that integrating by parts twice gives \eqref{var bi energ rispetto mappa}.
	\end{proof}

	\begin{rmk}\label{rmk stress energy tensor har e biharm map}
		It is well known, see for instance Proposition 1.1.17 of \cite{A} for a proof, that
		\begin{equation}\label{div stress energy tensor}
			\mbox{div}(T^g)_i=\tau^g(\varphi)^a\varphi^a_i.
		\end{equation}
		In particular harmonic maps are conservative. The analogous happens also for the bi-energy, that is,
		\begin{equation}\label{div energy stress per bi energy}
			\mbox{div}(T^g_2)_i=\tau^g_2(\varphi)^a\varphi^a_i.
		\end{equation}
		To prove \eqref{div energy stress per bi energy} notice that
		\begin{align*}
		(T^g_2)_{ij,j}=&\tau^g(\varphi)^a_{ij}\varphi^a_j+\tau^g(\varphi)^a_i\tau^g(\varphi)^a+\tau^g(\varphi)^a_{jj}\varphi^a_i+\tau^g(\varphi)^a_j\varphi^a_{ij}-\left(e^g_2(\varphi)+\tau^g(\varphi)^a_k\varphi^a_k\right)_i,
		\end{align*}
		that is,
		\begin{equation}\label{scritt div T2 in corso}
		(T^g_2)_{ij,j}=(\tau^g(\varphi)^a_{ij}-\tau^g(\varphi)^a_{ji})\varphi^a_j+\tau^g(\varphi)^a_{jj}\varphi^a_i,
		\end{equation}
		
		Notice that \eqref{comm rule der quarta phi} implies
		\begin{equation*}
		\tau^g(\varphi)^a_{ij}=\tau^g(\varphi)^a_{ji}+2R^s_{kij}\varphi^a_{ks}-{}^NR^a_{bcd}\varphi^b_{kk}\varphi^c_i\varphi^d_j,
		\end{equation*}
		that is, using the symmetries of $\mbox{Riem}$ and of $\nabla d\varphi$,
		\begin{equation*}
		\tau^g(\varphi)^a_{ij}-\tau^g(\varphi)^a_{ji}=-{}^NR^a_{bcd}\varphi^b_{kk}\varphi^c_i\varphi^d_j.
		\end{equation*}
		Plugging the above into \eqref{scritt div T2 in corso} we get
		\begin{equation*}
		\mbox{div}(T^g_2)_i=\left(\tau^g(\varphi)^a_{jj}-{}^NR^a_{bcd}\varphi^b_j\varphi^c_j\tau^g(\varphi)^d\right)\varphi^a_i,
		\end{equation*}
		that is \eqref{div energy stress per bi energy}, recalling \eqref{def bi tension}.
		
		For a detailed study of the stress-energy tensor $T_2^g$ for biharmonic maps we refer \cite{LMO}. Notice that the stress-energy tensor $T^g$ for harmonic maps play a special role for bidimensional manifolds: indeed it is well known that $T^g=0$ if and only if $\varphi:(M,g)\to (N,\eta)$ is weakly conformal and $m=2$, where $m$ is the dimension of $M$. We expect that the the stress-energy tensor for biharmonic maps could play a special role for four dimensional manifolds and we will investigate it in future works.
	\end{rmk}
	
	\subsection{The linearization of the $\varphi$-scalar curvature map}\label{section The linearization of the}
		We can consider the $\varphi$-scalar curvature as a map
		\begin{equation*}
			\mathbb{S}:\mathcal{M}\times \mathcal{F}\to \mathcal{C}^{\infty}(M), \quad (g,\varphi)\mapsto \mathbb{S}(g,\varphi)\equiv \mathbb{S}^{\varphi}(g)\equiv \mathbb{S}^g(\varphi)=S^{\varphi}_g.
		\end{equation*}
		Then, for every $(g,\varphi)\in \mathcal{M}\times \mathcal{F}$, the linearization of the $\varphi$-scalar curvature map $\mathbb{S}$ at $(g,\varphi)$
		\begin{equation*}
			d_{(g,\varphi)}\mathbb{S}:T_{(g,\varphi)}(\mathcal{M}\times\mathcal{F})\to \mathcal{C}^{\infty}(M)
		\end{equation*}
		is given by, using the identification $T_{g,\varphi}(\mathcal{M}\times\mathcal{F})\equiv S^2(M)\oplus \Gamma(\varphi^{-1}TN)$,
		\begin{equation*}
			(d_{(g,\varphi)}\mathbb{S})(h,v)=(d_g\mathbb{S}^{\varphi})(h)+(d_{\varphi}\mathbb{S}^g)(v) \quad \mbox{ for every } (h,v)\in S^2(M)\oplus \Gamma(\varphi^{-1}TN).
		\end{equation*}
		Using \eqref{var phi scalar curvature} and \eqref{var S phi rispetto phi} (that will be proved below),
		\begin{equation}
			(d_{(g,\varphi)}\mathbb{S})(h,\varphi)=-\Delta_g(\mbox{tr}_g(h))+\mbox{div}_g(\mbox{div}_g(h))-\langle h,\mbox{Ric}_g^{\varphi}\rangle_g-2\alpha \varphi^a_iv^a_i.
		\end{equation}
			It is easy to see that the adjoint $(d_{(g,\varphi)}\mathbb{S})^*:\mathcal{C}^{\infty}(M)\to S^2(M)\times \Gamma(\varphi^{-1}TN)$ of $d_{(g,\varphi)}\mathbb{S}$ is given by
			\begin{equation}\label{form adjoint phi scalar curv map}
			(d_{(g,\varphi)}\mathbb{S})^*(u)=[(d_{(g,\varphi)}\mathbb{S})^*(u)_1,(d_{(g,\varphi)}\mathbb{S})^*(u)_2],
			\end{equation}
			where
			\begin{equation*}
				\begin{dcases}
				(d_{(g,\varphi)}\mathbb{S})^*(u)_1:=\mbox{Hess}_g(u)-u\mbox{Ric}_g^{\varphi}-\Delta_g u g\in S^2(M)\\
				(d_{(g,\varphi)}\mathbb{S})^*(u)_2:=2\alpha[u\tau^g(\varphi)+d\varphi(\nabla_gu)]\in \Gamma(\varphi^{-1}TN).
				\end{dcases}
			\end{equation*}
			To prove \eqref{form adjoint phi scalar curv map} it is sufficient to prove that for every $u\in\mathcal{C}^{\infty}(M)$
			\begin{align*}
			\int_Mu\left(-\Delta_g(\mbox{tr}_g(h))+\mbox{div}_g(\mbox{div}_g(h))-\langle h,\mbox{Ric}^{\varphi}_g\rangle_g-2\alpha \varphi^a_iv^a_i\right)\mu_g\\
			=\int_M\left[\langle h,\mbox{Hess}_g(u)-u\mbox{Ric}^{\varphi}_g-\Delta_gu g \rangle_g+2\alpha(v,u\tau^g(\varphi)+d\varphi(\nabla_gu))\right]\mu_g,
			\end{align*}
			that follows easily from the divergence theorem.
			
			In the following Proposition we show that $d_{(g,\varphi)}\mathbb{S}$ is surjective (or equivalently, $(d_{(g,\varphi)}\mathbb{S})^*$ is injective), unless some particular conditions on $(g,\varphi)\in\mathcal{M}\times \mathcal{F}$ are satisfied.
		\begin{prop}\label{prop aggiunto iniettivo}
			Let $M$ be a compact manifold and let $(g,\varphi)\in \mathcal{M}\times \mathcal{F}$. If $(d_{(g,\varphi)}\mathbb{S})^*$ is not injective, then one of the following hold.
			\begin{itemize}
				\item $(M,g)$ is $\varphi$-Ricci flat with respect to $\alpha$ and
				\begin{equation}\label{nucleo aggiunto formale banale}
					\mbox{ker}(d_{(g,\varphi)}\mathbb{S})^*=\mathbb{R}.
				\end{equation}
				\item There exists a non-constant function $u\in\mathcal{C}^{\infty}(M)$ such that $\Sigma:=u^{-1}(\{0\})$ is a total umbilical hypersurface of $(M,g)$ and if $\mathcal{U}$ is a connected component of $M\setminus \Sigma$, then $\bar{\mathcal{U}}:=\mathcal{U}\times \mathbb{R}$ endowed with the metric $\bar{g}=g\pm u^2dt\otimes dt$, where $t$ is the coordinate on $\mathbb{R}$, is harmonic-Einstein with respect to $\alpha$ and $\bar{\varphi}:=\varphi\circ \pi_M$, where $\pi_M:\bar{M}\to M$ is the canonical projection.
			\end{itemize}
		\end{prop}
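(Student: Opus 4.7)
The plan is to directly unpack the two equations defining $\ker(d_{(g,\varphi)}\mathbb{S})^*$ and exploit the rigidity they impose. A function $u$ lies in the kernel iff
\begin{equation*}
\mbox{Hess}(u) - u\,\mbox{Ric}^{\varphi} - \Delta u\, g = 0, \qquad u\,\tau(\varphi) + d\varphi(\nabla u) = 0;
\end{equation*}
tracing the first yields $(1-m)\Delta u = u S^{\varphi}$, and substituting back produces the more rigid form
\begin{equation*}
\mbox{Hess}(u) = u\bigl(\mbox{Ric}^{\varphi} - \tfrac{S^{\varphi}}{m-1}g\bigr),
\end{equation*}
which I shall refer to as $(\star)$.

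I then split into cases. If a nonzero constant lies in $\ker(d_{(g,\varphi)}\mathbb{S})^*$, then $(\star)$ forces $\mbox{Ric}^{\varphi}=0$ and the second equation forces $\tau(\varphi)=0$, so $(M,g)$ is $\varphi$-Ricci flat. Conversely, under $\varphi$-Ricci flatness $(\star)$ reduces to $\mbox{Hess}(u)=0$; since $M$ is compact, $\Delta u = 0$ integrates to $u$ constant. Thus the first alternative of the statement corresponds exactly to the case where a nonzero constant is in the kernel, and in that situation $\ker(d_{(g,\varphi)}\mathbb{S})^*=\mathbb{R}$.

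Assume instead that $u\in\ker$ is non-constant and put $\Sigma:=u^{-1}(0)$. The key point is that $|\nabla u|$ cannot vanish on $\Sigma$: along any geodesic $\gamma$ the pullback $u\circ\gamma$ satisfies a homogeneous linear second-order ODE by $(\star)$, and if both $u(p)=0$ and $\nabla u(p)=0$ at some $p\in\Sigma$, the vanishing initial data would force $u\equiv 0$ along every geodesic from $p$ and hence on all of $M$ by connectedness, contradicting non-constancy. Therefore $\Sigma$ is a smooth hypersurface and, since $(\star)$ gives $\mbox{Hess}(u)=0$ on $\Sigma$, its second fundamental form $\mbox{Hess}(u)|_{T\Sigma}/|\nabla u|$ vanishes, so $\Sigma$ is in fact totally geodesic (in particular totally umbilical).

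For the warped-product conclusion, fix a connected component $\mathcal{U}$ of $M\setminus\Sigma$ on which $u>0$ (the opposite sign is analogous) and set $f:=-\log u$. The identity $\mbox{Hess}(f)-df\otimes df = -\mbox{Hess}(u)/u$ combined with $(\star)$ gives $\mbox{Ric}^{\varphi}+\mbox{Hess}(f)-df\otimes df = \tfrac{S^{\varphi}}{m-1}g$, while the second kernel equation reads $\tau(\varphi)=d\varphi(\nabla f)$. Using Corollary \ref{cor Phi Ricci for warped prod} with fibre $\mathbb{R}$ (so $d=1$ and ${}^F\mbox{Ric}=0$), a direct computation then shows $\overline{\mbox{Ric}}^{\bar\varphi} = \tfrac{S^{\varphi}}{m-1}\bar g$ pointwise on $\bar{\mathcal{U}}=\mathcal{U}\times\mathbb{R}$ endowed with $\bar g = g\pm u^2 dt\otimes dt$, so $\mathring{\overline{\mbox{Ric}}}^{\bar\varphi}=0$; the tension field $\bar\tau(\bar\varphi)$ vanishes by Proposition \ref{prop tension of Phi in function of tension of phi} together with the second kernel equation. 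Hence $(\bar{\mathcal{U}},\bar g)$ is harmonic-Einstein with respect to $\alpha$ and $\bar\varphi:=\varphi\circ\pi_M$. The main obstacle is the unique-continuation step guaranteeing $|\nabla u|\neq 0$ on $\Sigma$; constancy of $S^{\varphi}$ on $\mathcal{U}$ is not needed a priori and follows a posteriori from the generalized Schur identity \eqref{div of phi Ricci}.
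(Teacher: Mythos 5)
Your argument is correct and shares the paper's overall architecture (kernel characterization, constant versus non-constant dichotomy, passage to $f=-\log|u|$ and the warped product), but it deviates in two places worth comparing. First, where the paper establishes that $u$ cannot vanish to second order by invoking unique continuation for the eigenvalue-type equation $-\Delta u=\tfrac{S^{\varphi}}{m-1}u$ and then defers the regularity of $\Sigma$ to Proposition 2.3 of \cite{C}, you make the argument self-contained: the reduced identity $\mbox{Hess}(u)=u\bigl(\mbox{Ric}^{\varphi}-\tfrac{S^{\varphi}}{m-1}g\bigr)$ turns $u\circ\gamma$ into a solution of a linear second-order ODE along every geodesic, so $u(p)=0$ and $\nabla u(p)=0$ would force $u\equiv 0$ by Hopf--Rinow. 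This is cleaner and moreover yields that $\Sigma$ is totally geodesic, which is stronger than the totally umbilical conclusion claimed. Second, you bypass the paper's intermediate step proving that $S^{\varphi}$ is constant (a divergence computation using the generalized Schur identity), verifying instead that $\overline{\mbox{Ric}}^{\bar\varphi}$ is pointwise proportional to $\bar g$ via Corollary \ref{cor Phi Ricci for warped prod}; since Definition \ref{def harm einst mani} only asks for $\mathring{\overline{\mbox{Ric}}}^{\bar\varphi}=0$ and $\bar\tau(\bar\varphi)=0$, this suffices, and constancy of $\bar S^{\bar\varphi}$ (hence of $S^{\varphi}$) is indeed automatic afterwards. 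The paper needs the constancy up front only because it applies Corollary \ref{cor warp prod harm einst con fibra R}, which is phrased with a constant $\lambda$.

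The one point you omit that the paper does address is that $\Sigma\neq\varnothing$: the second alternative asserts that $u^{-1}(\{0\})$ \emph{is} a hypersurface, so you should rule out the case where $u$ has constant sign. This is recoverable in your setup: if, say, $u>0$ everywhere, your argument makes all of $M\times\mathbb{R}$ harmonic-Einstein, so $S^{\varphi}$ is constant a posteriori; integrating $-\Delta u=\tfrac{S^{\varphi}}{m-1}u$ over the closed manifold $M$ then forces $S^{\varphi}=0$, hence $u$ harmonic and constant, a contradiction. Adding that observation closes the only gap.
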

		\begin{proof}	
			From \eqref{form adjoint phi scalar curv map}, $u\in \mbox{ker}(d_{(g,\varphi)}\mathbb{S})^*$ if and only if
			\begin{equation}\label{ker form adj scalar curv map}
			\begin{dcases}
			\mbox{Hess}_g(u)-u\mbox{Ric}_g^{\varphi}-\Delta_g u g=0\\
			u\tau^g(\varphi)+d\varphi(\nabla_gu)=0.
			\end{dcases}
			\end{equation}
			
			Notice that a non-zero constant $u$ belongs to $\mbox{ker}(d_{(g,\varphi)}S)^*$ if and only if $(M,g)$ is $\varphi$-Ricci flat and, if this is the case, then \eqref{nucleo aggiunto formale banale} holds. The equivalence follows immediately from \eqref{ker form adj scalar curv map} and if $(M,g)$ is $\varphi$-Ricci flat \eqref{ker form adj scalar curv map} reduces to
			\begin{equation}\label{hess u prop a delta u nel provare corvino}
			\begin{dcases}
			\mbox{Hess}_g(u)=\Delta_g u g\\
			d\varphi(\nabla_gu)=0.
			\end{dcases}
			\end{equation}
			Tracing the first equation above we conclude that $u$ is harmonic and, since $M$ is compact, is constant.
			
			Assume that $u$ is non-constant. Taking the trace of the first equation of \eqref{ker form adj scalar curv map} we get
			\begin{equation}\label{eq aut per u nel caso compatto}
			-\Delta u=\frac{\lambda}{m+1} u,
			\end{equation}
			where
			\begin{equation*}
			\lambda:=\frac{m+1}{m-1}S^{\varphi}.
			\end{equation*}
			Then $u$ satisfies a unique continuation property and thus, since it is not identically zero, it cannot vanish on an open subset of $M$.
			
			Now we show that $S^{\varphi}$ is constant. Taking the divergence of the first equation of \eqref{hess u prop a delta u nel provare corvino} we get
			\begin{equation}
			\eta^{ik}u_{ijk}-u^iR^{\varphi}_{ij}-u\eta^{ik}R^{\varphi}_{ij,k}-(\Delta u)_j=0.
			\end{equation}
			Using \eqref{div of phi Ricci} and commutating the indexes we obtain
			\begin{equation}
			R_{tj}u^t-u^iR^{\varphi}_{ij}-u\frac{1}{2}S^{\varphi}_j+\alpha u\tau(\varphi)^a\varphi^a_j=0.
			\end{equation}
			Using the second equation of \eqref{ker form adj scalar curv map} and the definition of $\varphi$-Ricci we conclude
			\begin{equation*}
			u dS^{\varphi}=0.
			\end{equation*}
			Then $S^{\varphi}$ is constant on $M$.
			
			It can be easily proved that $\Sigma:=u^{-1}(\{0\})$, if it is not empty, is a total umbilical hypersurface of $(M,g)$ (it follows from the fact that $du\neq 0$ on $\Sigma$, see the proof of Proposition 2.3 of \cite{C}). Notice that, since $u$ is non-constant and $M$ is compact, from \eqref{eq aut per u nel caso compatto} we deduce that $S^{\varphi}$ must be a positive constant and $u$ must change sign, hence $\Sigma\neq \varnothing$.
			
			By setting, on a fixed connected component $\mathcal{U}$ of $M\setminus \Sigma$,
			\begin{equation*}
			u:=\pm e^{-f},
			\end{equation*}
			according to the sign of $u$ on $\mathcal{U}$, then the validity of \eqref{ker form adj scalar curv map} gives, on $\mathcal{U}$,
			\begin{equation*}
			\begin{dcases}
			\mbox{Ric}^{\varphi}_g+\mbox{Hess}(f)-df\otimes df=(\Delta_g f-|\nabla f|^2_g) g\\
			\tau^g(\varphi)=d\varphi(\nabla_g f).
			\end{dcases}
			\end{equation*}
			Notice that, taking the trace of the first equation of the above,
			\begin{equation*}
			S^{\varphi}=(m-1)\Delta_f f,
			\end{equation*}
			hence the above can be rewritten as
			\begin{equation*}
			\begin{dcases}
			\mbox{Ric}^{\varphi}_g+\mbox{Hess}(f)-df\otimes df=\frac{\lambda}{m+1}g\\
			\tau^g(\varphi)=d\varphi(\nabla_g f).
			\end{dcases}
			\end{equation*}
			
			Then, using \hyperref[cor warp prod harm einst con fibra R]{Corollary \ref*{cor warp prod harm einst con fibra R}}, we have that $\bar{\mathcal{U}}:=\mathcal{U}\times \mathbb{R}$ endowed with the metric $\bar{g}=g\pm u^2dt\otimes dt$, where $t$ is the coordinate on $\mathbb{R}$, is harmonic-Einstein with respect to $\alpha$ and $\bar{\varphi}:=\varphi\circ \pi_M$, where $\pi_M:\bar{M}\to M$ is the canonical projection.
		\end{proof}
		
		The following Corollary follows automatically from the above Proposition
		\begin{cor}\label{cor estensione corvino}
			Let $(M,g)$ be a compact Riemannian manifold, $\alpha\in\mathbb{R}\setminus \{0\}$ and $u\in\mathcal{C}^{\infty}(M)$ a non-constant function. Then $u\in \mbox{Ker}(d_{(g,\varphi)}\mathbb{S})^*$ if and only if $\Sigma:=u^{-1}(\{0\})$ is a total umbilical hypersurface of $(M,g)$ and the (possibly disconnected) Riemannian manifold $M\setminus \Sigma$ is $\varphi$-static harmonic-Einstein, in the sense of \hyperref[def phi static]{Definition \ref{def phi static}}, with respect to $\alpha$ and $f$, where $f=-\log|u|$ on $M\setminus \Sigma$. In other words, $\bar{M}:=M\times \mathbb{R}$ endowed with the metric $\bar{g}=g- u^2dt\otimes dt$, where $t$ is the coordinate on $\mathbb{R}$, is harmonic-Einstein with respect to $\alpha$ and $\bar{\varphi}:=\varphi\circ \pi_M$, where $\pi_M:\bar{M}\to M$ is the canonical projection, outside of $\Sigma$ (where $\bar{g}$ degenerates).
		\end{cor}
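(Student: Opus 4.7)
The forward implication is essentially a restatement of the non-constant case of \hyperref[prop aggiunto iniettivo]{Proposition \ref*{prop aggiunto iniettivo}}. That proposition already shows that if $u$ is a non-constant element of $\mbox{Ker}(d_{(g,\varphi)}\mathbb{S})^*$, then $\Sigma=u^{-1}(\{0\})$ is a totally umbilical hypersurface of $(M,g)$ and, on each connected component $\mathcal{U}$ of $M\setminus\Sigma$, the Lorentzian warped product $\mathcal{U}\times\mathbb{R}$ with metric $\overline{g}=g\pm u^2\,dt\otimes dt$ is harmonic-Einstein with respect to $\alpha$ and $\bar{\varphi}=\varphi\circ\pi_M$. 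Invoking \hyperref[cor warp prod harm einst con fibra R]{Corollary \ref*{cor warp prod harm einst con fibra R}} in the reverse direction, and setting $f=-\log|u|$ on $M\setminus\Sigma$, this is precisely the $\varphi$-static harmonic-Einstein condition of \hyperref[def phi static]{Definition \ref*{def phi static}}.

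For the converse, the plan is to assume that $\Sigma$ is totally umbilical in $(M,g)$ and that $M\setminus\Sigma$ is $\varphi$-static harmonic-Einstein with respect to $\alpha$ and $f=-\log|u|$, so that on $M\setminus\Sigma$ one has, for some constant $\lambda\in\mathbb{R}$,
\begin{equation*}
\mbox{Ric}^{\varphi}+\mbox{Hess}(f)-df\otimes df=\frac{\lambda}{m+1}\,g,\qquad \tau(\varphi)=d\varphi(\nabla f),\qquad \Delta_ff=\frac{\lambda}{m+1}.
\end{equation*}
Writing $u=\epsilon\,e^{-f}$, with $\epsilon=\pm 1$ constant on each connected component of $M\setminus\Sigma$, the direct computations
\begin{equation*}
\nabla u=-u\,\nabla f,\qquad \mbox{Hess}(u)=u\,df\otimes df-u\,\mbox{Hess}(f),\qquad \Delta u=-u\,\Delta_ff
\end{equation*}
allow me to substitute into the kernel system from \eqref{form adjoint phi scalar curv map}, namely
\begin{equation*}
\mbox{Hess}_g(u)-u\,\mbox{Ric}_g^{\varphi}-\Delta_g u\, g=0,\qquad u\,\tau^g(\varphi)+d\varphi(\nabla_g u)=0.
\end{equation*}
Dividing the first equation by $-u$ reduces it identically to the first $\varphi$-static equation combined with $\Delta_ff=\lambda/(m+1)$, and dividing the second by $u$ reduces it to the tension field condition. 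Hence $u$ belongs to $\mbox{Ker}(d_{(g,\varphi)}\mathbb{S})^*$ on the open set $M\setminus\Sigma$.

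The last step is to propagate these identities across $\Sigma$. Both kernel equations define smooth tensor fields on $M$ because $u\in\mathcal{C}^\infty(M)$; they vanish on the open dense set $M\setminus\Sigma$, and therefore vanish globally by continuity. The only potentially delicate point is that the auxiliary function $f=-\log|u|$ blows up on $\Sigma$, but the kernel equations are smooth in $u$ itself, not in $f$, so the continuity argument is legitimate; the totally umbilical hypothesis on $\Sigma$ is what guarantees that the $\varphi$-static structures defined on the two (or more) sides of $\Sigma$ assemble into a single globally smooth function $u$. Overall, no new obstacle arises beyond those already overcome in \hyperref[prop aggiunto iniettivo]{Proposition \ref*{prop aggiunto iniettivo}} and \hyperref[cor warp prod harm einst con fibra R]{Corollary \ref*{cor warp prod harm einst con fibra R}}: the forward direction is a translation via $f=-\log|u|$, and the converse is a routine substitution followed by a continuity extension.
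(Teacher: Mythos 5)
Your proof is correct and matches the paper's intent: the paper states only that the corollary ``follows automatically'' from \hyperref[prop aggiunto iniettivo]{Proposition \ref*{prop aggiunto iniettivo}}, and your forward direction is exactly that, while your converse correctly reverses the computation via $u=\pm e^{-f}$ (so that $\mbox{Hess}(u)-u\mbox{Ric}^{\varphi}-\Delta u\,g=-u\bigl[\mbox{Ric}^{\varphi}+\mbox{Hess}(f)-df\otimes df-\Delta_ff\,g\bigr]$ and $u\tau(\varphi)+d\varphi(\nabla u)=u\bigl[\tau(\varphi)-d\varphi(\nabla f)\bigr]$) and then extends the smooth kernel equations across $\Sigma$ by continuity. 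The only minor imprecision is your closing remark: the total umbilicity of $\Sigma$ does no work in the converse --- what matters is only that $\Sigma$ is a hypersurface, hence nowhere dense, so the continuity argument applies, and the smoothness of $u$ across $\Sigma$ is a hypothesis rather than something the umbilicity must ``assemble''.
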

	
		\begin{rmk}
			The Corollary above shows that compact Riemannian manifolds admitting a non-constant smooth function in $u\in\mbox{Ker}(d_{(g,\varphi)}\mathbb{S})^*$ are (possibly disconnected) $\varphi$-static harmonic-Einstein manifolds endowed with a \textquotedblleft horizon \textquotedblright given by the zero-locus of $u$. Notice that \hyperref[cor estensione corvino]{Corollary \ref*{cor estensione corvino}} is an extension, in the compact case, of Proposition 2.7 of \cite{C}. It is possible also to deal with the non-compact case and to study in more detail the image of the $\varphi$-scalar curvature in the compact case, as done in \cite{FM}. Those tasks will be addressed, possibly, in some future works.
		\end{rmk}

	\subsection{Variational derivation of the harmonic-Einstein equations}\label{section Variational derivation of the harmonic-Einstein equations}
	
	\begin{dfn}
		The functional of {\em total $\varphi$-scalar curvature}, for every $(g,\varphi)\in\mathcal{M}\times \mathcal{F}$ and $\alpha\in\mathbb{R}\setminus\{0\}$, is given by
		\begin{equation}\label{def di phi R operatore}
		\mathcal{S}(g,\varphi)\equiv \mathcal{S}^{\varphi}(g)\equiv \mathcal{S}^g(\varphi):=\int_MS^{\varphi}_g\mu_g.
		\end{equation}
	\end{dfn}
	\begin{rmk}
		Denoting by $\mathcal{S}(g)$ the total scalar curvature of $(M,g)$, from the relation $S^{\varphi}=S-\alpha|d\varphi|^2$ and of total energy of $\varphi$ we immediately deduce
		\begin{equation}\label{oper S g phi relazionato a S e E}
		\mathcal{S}(g,\varphi)=\mathcal{S}(g)-2\alpha E^g(\varphi).
		\end{equation}
	\end{rmk}

	\begin{rmk}\label{remark per superfici}
		For $m=2$ the total $\varphi$-scalar curvature is given by
		\begin{equation*}
			\mathcal{S}(g,\varphi)=4\pi \chi(M)-2\alpha  E^g(\varphi),
		\end{equation*}
		where $\chi(M)$ is the Euler characteristic of $M$. Indeed, from Gauss-Bonnet formula,
		\begin{equation*}
			\frac{1}{2}\int_MS_g\mu_g=2\pi \chi(M),
		\end{equation*}
		hence the above follows easily from \eqref{oper S g phi relazionato a S e E}.
		
		As a consequence, $(g,\varphi)\in\mathcal{S}\times \mathcal{F}$ is a critical point for $\mathcal{S}$ if and only if it is a critical point for the total energy of $\varphi$. We have characterized critical point to the total energy of $\varphi$ in \hyperref[prop energia tot e bienergia punti critici]{Proposition \ref*{prop energia tot e bienergia punti critici}}: they satisfy
		\begin{equation*}
			\begin{dcases}
			\tau^g(\varphi)=0\\
			T^g=0.
			\end{dcases}
		\end{equation*}
		As mentioned in \hyperref[rmk stress energy tensor har e biharm map]{Remark \ref*{rmk stress energy tensor har e biharm map}}, $T^g=0$ if and only if $m=2$ and $\varphi$ is weakly conformal. Recall that $\varphi:(M,g)\to (N,\eta)$, where $M$ is a surface, is called branched minimal immersion (see \cite{BW}, Section 3.5) if it is weakly-conformal and harmonic. In conclusion, critical points of $\mathcal{S}$ for $m=2$ are given by $(g,\varphi)\in\mathcal{S}\times \mathcal{F}$ such that $\varphi:(M,g)\to (N,\eta)$ is a branched minimal immersion.
		
		Notice that the fact of being a branched minimal immersion does not depend only on the Riemannian metric $g$ but depends on the conformal class $[g]$ of $g$. Indeed, using \eqref{conf change energy of phi} and \eqref{conf change metric vol element}, we get that
		\begin{equation*}
		E^{\widetilde{g}}(\varphi)=E^g(\varphi).
		\end{equation*}
		This can be seen also directly: from \eqref{tension phi conf change metric} we have
		\begin{equation}\label{tension phi conf change metric surface}
		\tau^{\widetilde{g}}(\varphi)=e^{2h}\tau^g(\varphi)
		\end{equation}
		and clearly $\varphi$ is weakly conformal with respect to $\widetilde{g}$ if and only if it is with respect to $g$.
		
		Finally \eqref{conformal change metric scalar curvature} and \eqref{conf change metric vol element} give
		\begin{equation}\label{conformal change metric scalar curvature surface}
		S^{\varphi}_{\widetilde{g}}\mu_{\widetilde{g}}=S^{\varphi}_g\mu_g+2\Delta_g h\mu_g,
		\end{equation}
		and this in another equivalent way to see that $\mathcal{S}^{\varphi}(g)=\mathcal{S}^{\varphi}(\widetilde{g})$.
	\end{rmk}
	
	From now on assume that $m\geq 3$.
	\begin{rmk}\label{rmk phi scalar curv homog}
		Notice that $\mathcal{S}^{\varphi}$ is not scale invariant, it is homogeneous of degree $\frac{m-2}{2}$, that is, for every $\lambda>0$,
		\begin{equation}\label{tot scal curv hom di grado}
		\mathcal{S}^{\varphi}(\lambda g)=\lambda^{\frac{m-2}{2}} \mathcal{S}^{\varphi}( g).
		\end{equation}
		To prove \eqref{tot scal curv hom di grado} we set $\widetilde{g}:=\lambda g$ and we use \eqref{conformal change metric scalar curvature} and \eqref{conf change metric vol element} with $h\in\mathbb{R}$ such that $\lambda=e^{-2h}$, that are $\lambda S^{\varphi}_{\widetilde{g}}=S^{\varphi}_g$ and $\mu_{\widetilde{g}}=\lambda^{\frac{m}{2}}\mu_g$ to get
		\begin{equation*}
		\int_MS^{\varphi}_{\widetilde{g}}\mu_{\widetilde{g}}=\lambda^{\frac{m}{2}-1}\int_MS^{\varphi}_g\mu_g.
		\end{equation*}
	\end{rmk}

	To overcome this issue we will study also another functional.
	\begin{dfn}
		We set the {\em rescaled total $\varphi$-scalar curvature of $(M,g)$} as
		\begin{equation}\label{rescaled total scalar curvature}
		\bar{\mathcal{S}}(g,\varphi)=\bar{\mathcal{S}}^{\varphi}(g)=\bar{\mathcal{S}}^g(\varphi):=\mbox{vol}_{g}(M)^{-\frac{m-2}{m}}\mathcal{S}^{\varphi}(g).
		\end{equation}
	\end{dfn}
	\begin{rmk}
		It is easy to see that, proceeding as in \hyperref[rmk phi scalar curv homog]{Remark \ref*{rmk phi scalar curv homog}}, $\mbox{vol}_{\lambda g}(M)=\lambda^{\frac{m}{2}}\mbox{vol}_g(M)$ for every $g\in\mathcal{M}$ and $\lambda>0$. Combining it with \eqref{tot scal curv hom di grado} we immediately get, for every $g\in\mathcal{M}$ and $\lambda>0$
		\begin{equation*}
			\bar{\mathcal{S}}^{\varphi}(\lambda g)=\bar{\mathcal{S}}^{\varphi}(g),
		\end{equation*}
		that is, $\bar{\mathcal{S}}^{\varphi}$ is scale invariant.
	\end{rmk}
	
	\begin{prop}
		Let $M$ be a compact manifold of dimension $m\geq 3$ and let $(N,\eta)$ be a Riemannian manifold.
		\begin{itemize}
			\item The pair $(g,\varphi)\in \mathcal{M}\times \mathcal{F}$ is a critical point of the functional $\mathcal{S}$ on $\mathcal{M}\times \mathcal{F}$ if and only if
			\begin{equation*}
			\begin{dcases}
			\mbox{Ric}_g^{\varphi}=0\\
			\tau^g(\varphi)=0,
			\end{dcases}
			\end{equation*}
			that is, if and only if  $(M,g)$ is $\varphi$-Ricci flat with respect to $\alpha$. 
			\item The pair $(g,\varphi)\in \mathcal{M}\times \mathcal{F}$ is a critical point of the functional $\bar{\mathcal{S}}$ on $\mathcal{M}\times \mathcal{F}$ if and only if
			\begin{equation*}
			\begin{dcases}
			\mathring{\mbox{Ric}}^{\varphi}=0\\
			\tau(\varphi)=0,
			\end{dcases}
			\end{equation*}
			that is, if and only if  $(M,g)$ is harmonic-Einstein with respect to $\varphi$ and $\alpha$.
		\end{itemize}
	\end{prop}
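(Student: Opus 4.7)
The plan is to split each variation into a metric direction $h\in S^2(M)$ and a map direction $v\in\Gamma(\varphi^{-1}TN)$, and to exploit the clean splitting \eqref{oper S g phi relazionato a S e E}, which isolates the dependence of $\mathcal{S}$ on $\varphi$ into the $-2\alpha E^g(\varphi)$ summand. Since $\mbox{vol}_g(M)$ does not depend on $\varphi$, the $\varphi$-dependent part of both $\mathcal{S}$ and $\bar{\mathcal{S}}$ at a fixed $g$ is simply a nonzero multiple of $E^g(\varphi)$. Hence, by \eqref{var energia se cambio mappa}, criticality in the $\varphi$-direction is equivalent to $\tau^g(\varphi)=0$ in both cases, and there is nothing more to prove along the map variable.

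For the metric variation of $\mathcal{S}$, I would combine \eqref{var phi scalar curvature} with \eqref{var vol form} and use compactness of $M$ to kill the divergence terms $\Delta_g(\mathrm{tr}_g h)$ and $\mathrm{div}_g(\mathrm{div}_g h)$ via the divergence theorem, arriving at
\begin{equation*}
\dot{\mathcal{S}}^{\varphi}(g)[h]=-\int_M\left\langle h,\mbox{Ric}_g^{\varphi}-\tfrac{1}{2}S^{\varphi}_g\,g\right\rangle_g\mu_g.
\end{equation*}
The fundamental lemma of the calculus of variations then forces $\mbox{Ric}^{\varphi}=\tfrac{1}{2}S^{\varphi}g$ at every critical point. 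Taking the trace gives $(m-2)S^{\varphi}=0$, so $S^{\varphi}\equiv 0$ since $m\geq 3$, whence $\mbox{Ric}^{\varphi}=0$. Together with $\tau(\varphi)=0$ this is $\varphi$-Ricci flatness, and conversely any $\varphi$-Ricci flat pair obviously kills the integrand.

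For $\bar{\mathcal{S}}$ the only extra ingredient is the product rule applied to $\mbox{vol}_g(M)^{-(m-2)/m}$, which through \eqref{variazione volume} contributes an additional multiple of $\int_M\langle h,g\rangle_g\mu_g$. After a short rearrangement this should yield
\begin{equation*}
\dot{\bar{\mathcal{S}}}^{\varphi}(g)[h]=-\mbox{vol}_g(M)^{-\frac{m-2}{m}}\int_M\!\left\langle h,\mbox{Ric}_g^{\varphi}-\tfrac{1}{2}S^{\varphi}_g\,g+\tfrac{m-2}{2m}\bar{S}^{\varphi}g\right\rangle_g\mu_g,
\end{equation*}
where $\bar{S}^{\varphi}:=\mbox{vol}_g(M)^{-1}\mathcal{S}^{\varphi}(g)$ is the mean value of $S^{\varphi}$. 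Vanishing for every $h$ gives $\mbox{Ric}^{\varphi}=\tfrac{1}{2}S^{\varphi}g-\tfrac{m-2}{2m}\bar{S}^{\varphi}g$; tracing this equation forces $S^{\varphi}=\bar{S}^{\varphi}$, so $S^{\varphi}$ is constant on $M$. Substituting back gives $\mbox{Ric}^{\varphi}=\tfrac{S^{\varphi}}{m}g$, i.e.\ $\mathring{\mbox{Ric}}^{\varphi}=0$, which combined with $\tau(\varphi)=0$ is precisely the harmonic-Einstein condition. The converse is again immediate from the same formula.

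No step is genuinely hard here, since all the variational formulas are already available from the previous Proposition; the only thing to be careful about is to test the Euler--Lagrange equation against arbitrary $h\in S^2(M)$ (including non trace-free variations) so as to extract the full tensor identity rather than just its scalar trace, and to keep track of the scale-breaking contribution of the volume normalization — which is exactly the feature that widens the critical set from $\varphi$-Ricci flat to the full harmonic-Einstein class.
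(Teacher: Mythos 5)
Your proposal is correct and follows essentially the same route as the paper: the same splitting into metric and map directions, the same use of \eqref{var phi scalar curvature}, \eqref{var vol form}, \eqref{variazione volume} and the divergence theorem for the metric variation, the same reduction of the $\varphi$-direction to criticality of $E^g(\varphi)$ via \eqref{oper S g phi relazionato a S e E}, and the same trace arguments to pass from the Euler--Lagrange equations to $\mbox{Ric}^{\varphi}=0$ and $\mathring{\mbox{Ric}}^{\varphi}=0$ respectively. Your variation formulas agree with the paper's \eqref{variation R g(t) senza normalizzazione} and \eqref{variation rescaled total phi scalar curvature} up to an overall sign in how they are displayed.
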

	\begin{proof}
		Clearly
		\begin{equation*}
		\left.\frac{d}{dt}\right|_{t=0}\mathcal{S}^{\varphi}(g_t)=\int_M\left(\left.\frac{d}{dt}\right|_{t=0}S^{\varphi}_{g_t}\right)\mu_{g}+\int_MS^{\varphi}_{g}\left.\frac{d}{dt}\right|_{t=0}\mu_{g_t},
		\end{equation*}
		so that, using \eqref{var phi scalar curvature} and \eqref{var vol form},
		\begin{equation*}
		\left.\frac{d}{dt}\right|_{t=0}\mathcal{S}^{\varphi}(g_t)=\int_M[-\Delta_{g}(\mbox{tr}_{g}(h))+\mbox{div}_{g}(\mbox{div}_{g}(h))-\langle h,\mbox{Ric}^{\varphi}_{g}\rangle_{g}]\mu_{g}+\frac{1}{2}\int_MS^{\varphi}_{g}\mbox{tr}_{g}(h)\mu_{g}.
		\end{equation*}
		Using the divergence theorem, from the above we get
		\begin{equation}\label{variation R g(t) senza normalizzazione}
		\left.\frac{d}{dt}\right|_{t=0}\mathcal{S}^{\varphi}(g_t)=\int_M\left\langle h,\frac{S^{\varphi}_{g}}{2}g-\mbox{Ric}^{\varphi}_{g}\right\rangle_{g}\mu_{g}.
		\end{equation}
		In particular, $g$ is a critical point for $\mathcal{S}^{\varphi}$ if and only if
		\begin{equation*}
		\mbox{Ric}_g^{\varphi}=\frac{S^{\varphi}_g}{2}g.
		\end{equation*}
		Since $m\geq 3$ the above is equivalent to
		\begin{equation*}
		\mbox{Ric}^{\varphi}=0.
		\end{equation*} 
		
		Moreover, using the definition \eqref{rescaled total scalar curvature},
		\begin{equation}\label{form che serve per variazione R phi}
		\left.\frac{d}{dt}\right|_{t=0}\bar{\mathcal{S}}^{\varphi}(g_t)=-\frac{m-2}{m}\mbox{vol}_{g}(M)^{-\frac{2(m-1)}{m}}\mathcal{S}^{\varphi}(g)\left.\frac{d}{dt}\right|_{t=0}vol_{g_t}(M)+\mbox{vol}_{g}(M)^{-\frac{m-2}{m}}\left.\frac{d}{dt}\right|_{t=0}\mathcal{S}^{\varphi}_{g_t}.
		\end{equation}
		Then, by plugging \eqref{variazione volume} and \eqref{variation R g(t) senza normalizzazione} into \eqref{form che serve per variazione R phi} we obtain
		\begin{equation}\label{variation rescaled total phi scalar curvature}
		\left.\frac{d}{dt}\right|_{t=0}\bar{\mathcal{S}}^{\varphi}(g_t)=\int_M\left\langle vol_{g}(M)^{-\frac{m-2}{m}}\left[\left(\frac{S^{\varphi}_{g}}{2}-\frac{m-2}{2m}\frac{\mathcal{S}^{\varphi}(g)}{\mbox{vol}_{g}(M)}\right)g-\mbox{Ric}^{\varphi}_{g}\right],h\right\rangle_{g}\mu_g,
		\end{equation}
		and thus $g$ is critical for $\bar{\mathcal{S}}^{\varphi}$ if and only if
		\begin{equation*}
		\mbox{Ric}^{\varphi}_g=\left(\frac{S^{\varphi}_{g}}{2}-\frac{m-2}{2m}\frac{\mathcal{S}^{\varphi}(g)}{\mbox{vol}_{g}(M)}\right)g.
		\end{equation*}
		The above gives
		\begin{equation*}
		\mathring{\mbox{Ric}}^{\varphi}_g=0
		\end{equation*}
		and
		\begin{equation*}
		\mathcal{S}^{\varphi}(g)=S^{\varphi}_g\mbox{vol}_{g}(M).
		\end{equation*}
		
		From \eqref{oper S g phi relazionato a S e E} it is easy to see that $\varphi$ is a critical point of $\mathcal{S}^g$ or $\bar{\mathcal{S}}^g$ if and only if it is a critical point for $E^g$, that is, if and only if $\varphi:(M,g)\to (N,\eta)$ is harmonic. Combining with the results obtained above we conclude the proof.
	\end{proof}

	\begin{rmk}
		We denote by $\mathcal{M}_1$ the subset of $\mathcal{M}$ determined by the Riemannian metrics $g\in\mathcal{M}$ such that $\mbox{vol}_g(M)=1$. We claim that $g\in\mathcal{M}_1$ is critical for $\mathcal{S}^{\varphi}$ in $\mathcal{M}_1$ if and only if is critical for $\bar{\mathcal{S}}^{\varphi}$ in $\mathcal{M}$. Indeed, $T_g\mathcal{M}_1$ can be identified with $S^2_0(M,g)$, the set of traceless two times covariant tensor fields on $(M,g)$ (see \cite{B} at page 118). Hence, proceeding as in the proof of the Proposition above we get, for $g\in\mathcal{M}_1$ and $h\in S^2_0(M,g)$,
		\begin{equation*}
			\left.\frac{d}{dt}\right|_{t=0}\mathcal{S}^{\varphi}(g+th)=\int_M\left(\left.\frac{d}{dt}\right|_{t=0}S^{\varphi}_{g+th}\right)\mu_g=-\int_M\langle h,\mbox{Ric}^{\varphi}_g\rangle_g\mu_g,
		\end{equation*}
		where we integrated by parts and we used that
		\begin{equation*}
			\left.\frac{d}{dt}\right|_{t=0}\mu_{g+th}=\frac{1}{2}\mbox{tr}_g(h)\mu_g=0.
		\end{equation*}
		Then $g$ is critical in $\mathcal{M}_1$ if and only if $\mathring{\mbox{Ric}}^{\varphi}=0$, hence the claim.
	\end{rmk}
	
	\subsection{The total $\varphi$-scalar curvature restricted to a conformal class of metrics}\label{section scalar curvature restricted to conformal classes of metrics}
	
	The following Proposition shows that the problem of finding a conformal metric with constant $\varphi$-scalar curvature on a compact Riemannian manifold admit a variational characterization.
	\begin{prop}\label{prop phi curv scalar const punto critico in classe confome}
		Let $(M,g)$ be a compact Riemannian manifold of dimension $m\geq 3$, $\varphi:M\to N$ a smooth map, where $(N,\eta)$ is a target Riemannian manifold and $\alpha\in\mathbb{R}\setminus\{0\}$. Then the following are equivalent:
		\begin{itemize}
			\item The $\varphi$-scalar curvature is constant;
			\item The metric $g$ is a critical point of the rescaled total $\varphi$-scalar curvature $\bar{\mathcal{S}}^{\varphi}$ restricted to conformal class $[g]\subseteq \mathcal{M}$ of $g$.
		\end{itemize}
	\end{prop}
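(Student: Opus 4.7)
The plan is to restrict the variational formula for $\bar{\mathcal{S}}^{\varphi}$ (namely \eqref{variation rescaled total phi scalar curvature}, proved in the preceding proposition) to variations inside the conformal class $[g]$, and then read off the Euler--Lagrange equation. Since the map $\varphi$ is fixed in this analysis, no variation in $\mathcal{F}$ is needed.

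First I would identify the tangent space $T_g[g]$. Any curve $t\mapsto g_t\in [g]$ with $g_0=g$ can be written as $g_t=v_t^2 g$ with $v_0\equiv 1$, so $\dot g_0=h$ has the form $h=wg$ for some $w\in\mathcal{C}^{\infty}(M)$, and conversely every such $h$ is tangent to $[g]$ at $g$. Thus critical points of $\bar{\mathcal{S}}^{\varphi}|_{[g]}$ are exactly those $g$ for which the right-hand side of \eqref{variation rescaled total phi scalar curvature} vanishes for every $h=wg$, with $w\in\mathcal{C}^{\infty}(M)$ arbitrary.

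Next I would plug $h=wg$ into \eqref{variation rescaled total phi scalar curvature} and use the elementary identities $\mbox{tr}_g(g)=m$ and $\mbox{tr}_g(\mbox{Ric}^{\varphi}_g)=S^{\varphi}_g$. A short calculation gives
\begin{equation*}
\left\langle \left(\tfrac{S^{\varphi}_g}{2}-\tfrac{m-2}{2m}\tfrac{\mathcal{S}^{\varphi}(g)}{\mbox{vol}_g(M)}\right)g-\mbox{Ric}^{\varphi}_g,\,wg\right\rangle_g=\tfrac{m-2}{2}\,w\left(S^{\varphi}_g-\overline{S^{\varphi}_g}\right),
\end{equation*}
where $\overline{S^{\varphi}_g}:=\mbox{vol}_g(M)^{-1}\mathcal{S}^{\varphi}(g)$ denotes the average of $S^{\varphi}_g$. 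Therefore
\begin{equation*}
\left.\frac{d}{dt}\right|_{t=0}\bar{\mathcal{S}}^{\varphi}(g_t)=\tfrac{m-2}{2}\,\mbox{vol}_g(M)^{-\frac{m-2}{m}}\int_Mw\left(S^{\varphi}_g-\overline{S^{\varphi}_g}\right)\mu_g.
\end{equation*}

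Finally I would conclude. Since $m\geq 3$, the prefactor $\tfrac{m-2}{2}$ is nonzero, so the vanishing of the above integral for all $w\in\mathcal{C}^{\infty}(M)$ is equivalent, by the fundamental lemma of the calculus of variations, to $S^{\varphi}_g\equiv \overline{S^{\varphi}_g}$, i.e.\ to $S^{\varphi}_g$ being constant on $M$. Conversely, if $S^{\varphi}_g$ is constant, then $S^{\varphi}_g=\overline{S^{\varphi}_g}$ and the integrand vanishes identically, so $g$ is critical in $[g]$. There is no serious obstacle here: the entire argument is a direct trace computation once the variation formula \eqref{variation rescaled total phi scalar curvature} is in hand, and the only subtlety worth noting is that the unnormalized $\mathcal{S}^{\varphi}$ would not give a meaningful restriction to $[g]$ in view of its homogeneity (compare \hyperref[rmk phi scalar curv homog]{Remark \ref*{rmk phi scalar curv homog}}), which is precisely why we work with the scale-invariant $\bar{\mathcal{S}}^{\varphi}$.
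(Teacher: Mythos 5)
Your argument is correct and is essentially the paper's own proof: both restrict the variation formula \eqref{variation rescaled total phi scalar curvature} to directions $h=wg$ tangent to the conformal class, take the trace to obtain the factor $\tfrac{m-2}{2}\left(S^{\varphi}_g-\overline{S^{\varphi}_g}\right)$, and conclude by the fundamental lemma of the calculus of variations. The only cosmetic difference is that the paper parametrizes the conformal variations as $\widetilde{g}_t=(f^2+t\eta)g$ and states the criticality condition at a general $\widetilde{g}\in[g]$, which is equivalent to your formulation at $g$.
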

	\begin{proof}
		We have to characterize the critical points of $\bar{\mathcal{S}}^{\varphi}$ restricted to $[g]$. Let $\widetilde{g}\in [g]$, that is,
		\begin{equation*}
		\widetilde{g}=f^2g
		\end{equation*}
		for some positive function $f$ on $M$. We set $\eta\in\mathcal{C}^{\infty}(M)$ and we define, for $t$ sufficiently small,
		\begin{equation*}
			f_t^2:=f^2+t\eta>0 \quad \mbox{ on } M.
		\end{equation*}
		By setting
		\begin{equation*}
			\widetilde{g}_t=f^2_tg
		\end{equation*}
		we have
		\begin{equation}
			\widetilde{g}_t=\widetilde{g}+th,
		\end{equation}
		where $h:=\eta g$. In particular $\widetilde{g}_t$ is a variation of $\widetilde{g}$ that lies in $[g]$ and all such variations are of this form. Then $\widetilde{g}$ is critical for $\bar{\mathcal{S}}^{\varphi}$ on $[g]$ if and only if, for every $\eta\in\mathcal{C}^{\infty}(M)$,
		\begin{equation*}
			\left.\frac{d}{dt}\right|_{t=0}\bar{\mathcal{S}}^{\varphi}(\widetilde{g}_t)=0.
		\end{equation*}
		Using \eqref{variation rescaled total phi scalar curvature}, since $h=\eta g$ we immediately get
		\begin{align*}
		\left.\frac{d}{dt}\right|_{t=0}\bar{\mathcal{S}}^{\varphi}(\widetilde{g}_t)=&\int_M\mbox{vol}_{\widetilde{g}}(M)^{-\frac{m-2}{m}}\eta \mbox{tr}_{\widetilde{g}}\left[\left(\frac{S^{\varphi}_{\widetilde{g}}}{2}-\frac{m-2}{2m}\frac{\mathcal{S}^{\varphi}(\widetilde{g})}{\mbox{vol}_{\widetilde{g}}(M)}\right)\widetilde{g}-\mbox{Ric}^{\varphi}_{\widetilde{g}}\right]\mu_{\widetilde{g}}\\
		=&\frac{m-2}{2}\mbox{vol}_{\widetilde{g}}(M)^{-\frac{m-2}{m}}\int_M\left(S^{\varphi}_{\widetilde{g}}-\frac{\mathcal{S}^{\varphi}(\widetilde{g})}{\mbox{vol}_{\widetilde{g}}(M)}\right)\eta\mu_{\widetilde{g}}.
		\end{align*}
		Then $\widetilde{g}$ is critical for $\bar{\mathcal{S}}^{\varphi}$ on $[g]$ if and only if
		\begin{equation}\label{g tilde staz rispetto var conf}
		\mathcal{S}^{\varphi}(\widetilde{g})=S^{\varphi}_{\widetilde{g}}vol_{\widetilde{g}}(M),
		\end{equation}
		that is, if and only if $S^{\varphi}_{\widetilde{g}}$ is constant.
	\end{proof}
	
	Let $(M,g)$ be a compact Riemannian manifold of dimension $m\geq 3$, $\varphi:M\to N$ a smooth map, where $(N,\eta)$ is a target Riemannian manifold and $\alpha\in\mathbb{R}\setminus\{0\}$.
	\begin{dfn}
		The {\em $\varphi$-Yamabe invariant of $(M,g)$} as
		\begin{equation}\label{def phi yamabe invariant}
		Y^{\varphi}(g):=\inf_{\widetilde{g}\in [g]}\overline{\mathcal{S}}^{\varphi}.
		\end{equation}
	\end{dfn}
	We are going to show that the Definition above makes sense.
	\begin{dfn}
		For every $u\in\mathcal{C}^{\infty}(M)$, the {\em $\varphi$-conformal Laplacian} is given by
		\begin{equation*}
		L^{\varphi}_g(u):=-\frac{4(m-1)}{m-2}\Delta_g u+S^{\varphi}_g u.
		\end{equation*}
	\end{dfn}
	We denote by $\lambda_1(L^{\varphi}_g)$ the first eigenvalue of $L^{\varphi}_g$. By the variational characterization of $\lambda_1(L^{\varphi}_g)$
	\begin{equation*}
		\lambda_1(L^{\varphi}_g)=\inf_{u\in\mathcal{C}^{\infty}(M), u\not \equiv 0}\frac{\int_M\left(\frac{4(m-1)}{m-2}|\nabla_g u|^2_g+S^{\varphi}_g u^2\right)\mu_g}{\int_M u^2\mu_g}
	\end{equation*}
	it is immediate to get
	\begin{equation}\label{stima grezza primo autovalore lapl conf}
		\lambda_1(L^{\varphi}_g)\geq \inf_MS^{\varphi}>-\infty.
	\end{equation}
	\begin{prop}
		Let $(M,g)$ be a compact Riemannian manifold of dimension $m\geq 3$, $\varphi:M\to N$ a smooth map, where $(N,\eta)$ is a target Riemannian manifold and $\alpha\in\mathbb{R}\setminus\{0\}$. For every $\widetilde{g}\in [g]$ we have
		\begin{equation}\label{tot scalar curv limitata}
		\overline{\mathcal{S}}^{\varphi}(\widetilde{g})\geq \min\{0,\lambda_1(L^{\varphi}_g)\}.
		\end{equation}
		In particular, the $\varphi$-Yamabe invariant of $(M,g)$ is well defined.

	\end{prop}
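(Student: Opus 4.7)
The plan is to reduce the estimate to a Rayleigh-type quotient for the $\varphi$-conformal Laplacian by exploiting the conformal transformation laws from Section~\ref{section Transformation laws of curvature under a conformal change of metric}. I would write an arbitrary $\widetilde{g}\in[g]$ in the form $\widetilde{g} = \phi^{4/(m-2)}g$ with $\phi\in\mathcal{C}^{\infty}(M)$, $\phi>0$, and first establish the pointwise identity
\begin{equation*}
S^{\varphi}_{\widetilde{g}}\,\phi^{(m+2)/(m-2)} = L^{\varphi}_g\phi.
\end{equation*}
This follows by specialising \eqref{conformal change metric scalar curvature} to $h = -\tfrac{2}{m-2}\log\phi$, using $|d\varphi|^2_{\widetilde{g}} = \phi^{-4/(m-2)}|d\varphi|^2_g$ from \eqref{conf change energy of phi}, and absorbing the $-\alpha|d\varphi|^2$ term into the definition of $L^{\varphi}_g$.

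Multiplying this pointwise identity by $\mu_{\widetilde{g}} = \phi^{2m/(m-2)}\mu_g$ and integrating gives $\mathcal{S}^{\varphi}(\widetilde{g}) = \int_M \phi\,L^{\varphi}_g\phi\,\mu_g$, while $\mbox{vol}_{\widetilde{g}}(M) = \int_M\phi^{2m/(m-2)}\mu_g$, and therefore
\begin{equation*}
\overline{\mathcal{S}}^{\varphi}(\widetilde{g}) = \frac{\int_M\phi\,L^{\varphi}_g\phi\,\mu_g}{\left(\int_M\phi^{2m/(m-2)}\mu_g\right)^{(m-2)/m}}.
\end{equation*}
This rewrites the proposition as a purely spectral inequality for $L^{\varphi}_g$ and replaces the search over metrics $\widetilde{g}\in[g]$ by a search over positive functions $\phi$.

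The final step is a case split on the sign of $\lambda_1(L^{\varphi}_g)$. The variational characterization recalled just before the Proposition yields $\int_M \phi L^{\varphi}_g\phi\,\mu_g \geq \lambda_1(L^{\varphi}_g)\int_M\phi^2\mu_g$. When $\lambda_1(L^{\varphi}_g)\geq 0$ the numerator is non-negative, so $\overline{\mathcal{S}}^{\varphi}(\widetilde{g})\geq 0 = \min\{0,\lambda_1(L^{\varphi}_g)\}$ and the estimate is immediate. When $\lambda_1(L^{\varphi}_g)<0$, I would couple the Rayleigh estimate with the H\"older inequality
\begin{equation*}
\int_M\phi^2\mu_g \leq \left(\int_M\phi^{2m/(m-2)}\mu_g\right)^{(m-2)/m}\mbox{vol}_g(M)^{2/m},
\end{equation*}
and, since $\lambda_1<0$ flips the sign when multiplying, this should produce the claimed lower bound $\overline{\mathcal{S}}^{\varphi}(\widetilde{g})\geq \lambda_1(L^{\varphi}_g)$ after a suitable normalization.

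The main obstacle is precisely the mismatch between the $L^2$ denominator appearing in the definition of $\lambda_1(L^{\varphi}_g)$ and the $L^{2m/(m-2)}$ denominator appearing naturally in $\overline{\mathcal{S}}^{\varphi}$. H\"older's inequality reconciles the two effortlessly in the non-negative case, but extracting exactly the constant $\lambda_1(L^{\varphi}_g)$ (and not a volume-weakened version $\lambda_1(L^{\varphi}_g)\,\mbox{vol}_g(M)^{2/m}$) in the negative case is the delicate point and will likely require exploiting the scale invariance of $\overline{\mathcal{S}}^{\varphi}$ together with a judicious rescaling of the reference metric within $[g]$.
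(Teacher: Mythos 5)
Your proposal is essentially the paper's own proof: the same substitution $\widetilde{g}=u^{4/(m-2)}g$, the same pointwise Yamabe-type identity $L^{\varphi}_g u=S^{\varphi}_{\widetilde{g}}\,u^{(m+2)/(m-2)}$ obtained from \eqref{transf law phi scalar con f}, the same rewriting of $\overline{\mathcal{S}}^{\varphi}(\widetilde{g})$ as a Rayleigh-type quotient with $L^{2m/(m-2)}$ denominator, and the same case split on the sign of $\lambda_1(L^{\varphi}_g)$. The one point where you hesitate --- extracting the bare constant $\lambda_1(L^{\varphi}_g)$ in the negative case --- is handled in the paper by the inequality $\left(\int_M u^{2p}\mu_g\right)^{1/p}\geq \int_M u^2\mu_g$ with $p=\tfrac{m}{m-2}$, attributed to Jensen; this is exactly your H\"older inequality with the factor $\mbox{vol}_g(M)^{2/m}$ dropped, and it is legitimate only when $\mbox{vol}_g(M)\leq 1$. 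So your worry is well founded: since $\lambda_1(L^{\varphi}_{\lambda g})=\lambda^{-1}\lambda_1(L^{\varphi}_g)$ while $\overline{\mathcal{S}}^{\varphi}$ is scale invariant, the scale-invariant lower bound is $\lambda_1(L^{\varphi}_g)\,\mbox{vol}_g(M)^{2/m}$ --- which is precisely what your H\"older argument yields --- and \eqref{tot scalar curv limitata} with the unweighted constant requires normalizing $g$ to unit volume; no rescaling trick will remove the factor. For the stated consequence that $Y^{\varphi}(g)>-\infty$ the volume factor is of course immaterial, so your argument fully proves the part of the Proposition that is actually used.
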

	\begin{proof}
		Recall the validity of \eqref{transf law phi scalar con f}. By setting $u=e^{-\frac{f}{2}}$, so that
		\begin{equation*}
		\widetilde{g}:=u^{\frac{4}{m-2}}g
		\end{equation*}
		the validity of \eqref{transf law phi scalar con f} gives 
		\begin{equation}\label{eq phi yamabe}
		\frac{4(m-1)}{m-2}\Delta u-S^{\varphi}u+\widetilde{S}^{\varphi}u^{\frac{m+2}{m-2}}=0.
		\end{equation}
		
		We set
		\begin{equation*}
		(L^{\varphi}_g(u),u):=\int_M L^{\varphi}_g(u)u\mu=-\frac{4(m-1)}{m-2}\int_Mu\Delta_g u\mu_g+\int_MS^{\varphi}_g u^2\mu_g.
		\end{equation*}
		Notice that, since $M$ is compact,
		\begin{equation*}
		(L^{\varphi}_g(u),u)=\int_M\left(\frac{4(m-1)}{m-2}|\nabla_g u|^2_g+S^{\varphi}_g u^2\right)\mu_g.
		\end{equation*}
		Clearly,
		\begin{equation}\label{stima su L phi u applicato a u}
		(L^{\varphi}_g(u),u)\geq \lambda_1(L^{\varphi}_g)\|u\|^2_{L^2(M,g)}.
		\end{equation}
		
		Recalling the validity of \eqref{conf change metric vol element}, that in terms of $u$ is given by
		\begin{equation*}
		\widetilde{\mu}=u^{\frac{2m}{m-2}}\mu,
		\end{equation*}
		we immediately get
		\begin{equation*}
		\mbox{vol}_{\widetilde{g}}(M)=\int_Mu^{\frac{2m}{m-2}}\mu.
		\end{equation*}
		Combining the above and \eqref{eq phi yamabe} with the definition of $\bar{\mathcal{S}}^{\varphi}$ we have
		\begin{equation*}
		\overline{\mathcal{S}}^{\varphi}(\widetilde{g})=\left(\int_Mu^{\frac{2m}{m-2}}\mu\right)^{-\frac{m-2}{m}}(L^{\varphi}u,u).
		\end{equation*}
		Then, using \eqref{stima su L phi u applicato a u}, from the above we deduce
		\begin{equation}\label{stima su S bar phi con lamba 1}
		\overline{\mathcal{S}}^{\varphi}(\widetilde{g})\geq\lambda_1(L^{\varphi}_g)\left(\int_Mu^{\frac{2m}{m-2}}\mu\right)^{-\frac{m-2}{m}}\int_Mu^2\mu.
		\end{equation}
		
		From \eqref{stima su S bar phi con lamba 1} we deduce the validity of \eqref{tot scalar curv limitata}. Indeed, if $\lambda_1(L^{\varphi}_g)\geq 0$, from \eqref{stima su S bar phi con lamba 1} we immediately get
		\begin{equation*}
		\overline{\mathcal{S}}^{\varphi}(\widetilde{g})\geq 0.
		\end{equation*}
		
		Notice that, from Jensen's inequality applied to the convex function $t^p$, for $p>1$, we have
		\begin{equation}
		\left(\int u^2\right)^p\leq \int u^{2p},
		\end{equation} 
		that is,
		\begin{equation}\label{Jensen}
		\left(\int u^{2p}\right)^{\frac{1}{p}}\geq \int u^2.
		\end{equation}
		
		Then, if $\lambda_1(L^{\varphi}_g)<0$, using \eqref{stima su S bar phi con lamba 1} and \eqref{Jensen} for
		\begin{equation*}
		p=\frac{m}{m-2}>1,
		\end{equation*}
		we obtain
		\begin{equation*}
		\overline{\mathcal{S}}^{\varphi}(\widetilde{g})\geq \lambda_1(L^{\varphi}_g).
		\end{equation*}
		In conclusion, \eqref{tot scalar curv limitata} holds.
		
		The validity of \eqref{tot scalar curv limitata} shows that
		\begin{equation*}
		\inf_{u\in\mathcal{C}^{\infty}(M), u>0}\overline{\mathcal{S}}^{\varphi}(u^{\frac{4}{m-2}}g)>-\infty.
		\end{equation*}
		To conclude notice that
		\begin{equation*}
		Y^{\varphi}(g)=\inf_{u\in\mathcal{C}^{\infty}(M), u>0}\overline{\mathcal{S}}^{\varphi}(u^{\frac{4}{m-2}}g). \qedhere
		\end{equation*}
	\end{proof}

	\begin{rmk}
		We have just given the definition of the $\varphi$-Yamabe invariant in the compact case. The study of his property (also in the complete non-compact case) will be the subject of some future works.
	\end{rmk}
	
	\subsection{Variational characterization of four dimensional $\varphi$-Bach flat manifolds}\label{section Variational characterization of four dimensional}
	
	Let $M$ be a closed smooth manifold of dimension $m\geq 3$, $\varphi:M\to N$ a smooth map with target a Riemannian manifold $(N,\eta)$ and $\alpha\in\mathbb{R}\setminus\{0\}$.
	\begin{dfn}
		We define the {\em Bach operator} $\mathcal{B}:\mathcal{M}\times \mathcal{F}\to \mathbb{R}$ as
		\begin{equation*}
		\mathcal{B}(g,\varphi)=\mathcal{B}^{\varphi}(g):=\int_M(S_2(A^{\varphi}_g)-\alpha e_2(\varphi))\mu_g=\mathcal{S}_2(g,\varphi)-\alpha E^g_2(\varphi),
		\end{equation*}
		where
		\begin{equation*}
		\mathcal{S}_2(g,\varphi)\equiv \mathcal{S}_2^{\varphi}(g):=\int_MS_2^{\varphi}\mu_g
		\end{equation*}
		and $S_2(A^{\varphi}_g)$ is the second elementary symmetric polynomial in the eigenvalues of the $\varphi$-Schouten tensor $A^{\varphi}_g$ of $(M,g)$.
	\end{dfn}
	\begin{rmk}\label{remark per S 2 schouten}
		The Bach operator can be equivalently written as
		\begin{equation}\label{scrittura comoda per Bach operator}
		\mathcal{B}(g,\varphi)=\int_M\left(\frac{m}{8(m-1)}(S_g^{\varphi})^2-\frac{1}{2}|\mbox{Ric}_g^{\varphi}|_g^2-\frac{\alpha}{2}|\tau^g(\varphi)|^2\right)\mu_g.
		\end{equation}
		
		Indeed, recall that if $A$ is a two times covariant symmetric tensor with eigenvalues $\lambda_1\leq \ldots \leq \lambda_m$, then
		\begin{equation*}
		\mbox{tr}(A)=\sum_i\lambda_i, \quad S_2(A)=\sum_{i<j}\lambda_i\lambda_j.
		\end{equation*}
		Clearly
		\begin{equation*}
		\left(\sum_i\lambda_i\right)^2=\sum_i\lambda_i^2+2\sum_{i<j}\lambda_i\lambda_j,
		\end{equation*}
		hence we have the validity of
		\begin{equation*}
		\mbox{tr}(A)^2=|A|^2+2S_2(A).
		\end{equation*}
		
		Applying the above formula to the $\varphi$-Schouten tensor $A^{\varphi}_g$ of $(M,g)$ we get
		\begin{equation*}
		S_2(A^{\varphi}_g)=\frac{1}{2}[\mbox{tr}(A^{\varphi}_g)^2-|A^{\varphi}_g|_g^2].
		\end{equation*}
		Using the definition of the $\varphi$-Schouten tensor we infer
		\begin{equation*}
		\mbox{tr}(A^{\varphi}_g)=\frac{m-2}{2(m-1)}S_g^{\varphi}, \quad |A^{\varphi}|_g^2=|\mbox{Ric}^{\varphi}|_g^2-\frac{3m-4}{4(m-1)^2}(S^{\varphi}_g)^2,
		\end{equation*}
		then, by plugging those relations into the above we finally conclude
		\begin{equation}\label{valore di S 2 A phi in termini di phi ricci e phi scalar curv}
		S_2(A^{\varphi}_g)=\frac{m}{8(m-1)}(S^{\varphi}_g)^2-\frac{1}{2}|\mbox{Ric}^{\varphi}|_g^2,
		\end{equation}
		and thus \eqref{scrittura comoda per Bach operator} follows immediately, using also the definition \eqref{def bi energia totale} of total bi-energy of $\varphi$.
	\end{rmk}
	
	We are ready to state our main Theorem.
	\begin{thm}\label{theom phi bach flat punti critici bach}
		Let $M$ be a closed orientable four dimensional smooth manifold. Then $(g,\varphi)\in\mathcal{M}\times \mathcal{F}$ is a critical point for $\mathcal{B}$ on $\mathcal{M}\times \mathcal{F}$ if and only if $B^{\varphi}=0$ and $J=0$. Notice that, if $\varphi$ is a submersion a.e. then $(g,\varphi)$ is critical if and only if $(M,g)$ is $\varphi$-Bach flat.
	\end{thm}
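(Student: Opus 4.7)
The plan is to compute the two partial variations of $\mathcal{B}$ on the product $\mathcal{M}\times\mathcal{F}$ separately and identify their gradients with $B^{\varphi}$ and $J$ respectively. Using the rewriting \eqref{scrittura comoda per Bach operator}, with $m=4$ one has
\begin{equation*}
\mathcal{B}(g,\varphi)=\frac{1}{6}\int_M(S^{\varphi}_g)^2\mu_g-\frac{1}{2}\int_M|\mbox{Ric}^{\varphi}_g|^2_g\mu_g-\frac{\alpha}{2}\int_M|\tau^g(\varphi)|^2\mu_g,
\end{equation*}
and the natural strategy is to establish two lemmas (\hyperref[lemma variazione bach operator rispetto metrica]{Lemma \ref*{lemma variazione bach operator rispetto metrica}} and \hyperref[lemma variazione bach operator rispetto mappa]{Lemma \ref*{lemma variazione bach operator rispetto mappa}}) stating that the gradient of $\mathcal{B}$ in $g$ is a nonzero constant multiple of $B^{\varphi}$ and the gradient in $\varphi$ is a nonzero constant multiple of $J$. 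Since $T_{(g,\varphi)}(\mathcal{M}\times\mathcal{F})\cong S^2(M)\oplus\Gamma(\varphi^{-1}TN)$ and the variations $h$ and $v$ are independent, the Euler--Lagrange system decouples to $B^{\varphi}=0$ and $J=0$.

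For the metric variation I would apply \eqref{variation phi ricci}, \eqref{var phi scalar curvature}, \eqref{var vol form} and \eqref{var bi energ rispetto metrica}. In the $(S^{\varphi})^2$ piece the two derivatives appearing in $\delta S^{\varphi}$ are transferred onto $S^{\varphi}$ by integration by parts, producing exactly the terms $\mbox{Hess}(S^{\varphi})$, $-\Delta S^{\varphi} g$, $-S^{\varphi}\mbox{Ric}^{\varphi}$ and the scalar multiple of $(S^{\varphi})^2g$ with the coefficients that appear in \eqref{scritt alt phi bach m=4}. In the $|\mbox{Ric}^{\varphi}|^2$ piece the rough Laplacian $\Delta\mbox{Ric}^{\varphi}$ arises from pairing \eqref{variation phi ricci} with $\mbox{Ric}^{\varphi}$ under integration by parts; the commutation rule \eqref{general commutation rule tensor field} together with the generalised Schur identity \eqref{div of phi Ricci} reorders the double covariant derivatives and produces the curvature contractions $R_{kitj}(R^{\varphi})^{kt}$ and $(R^{\varphi})^2_{ij}$. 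The bi-energy term contributes, via $T_2^g$, precisely the $\alpha$-coupled terms $\varphi^a_{ij}\tau(\varphi)^a$, $((R^{\varphi})^k_i\varphi^a_j+(R^{\varphi})^k_j\varphi^a_i)\varphi^a_k$ and the $|\tau(\varphi)|^2 g$-trace needed to complete \eqref{scritt alt phi bach m=4}. This is exactly why the alternative expression for $B^{\varphi}$ was derived earlier in that form: it is tailored to emerge from this computation.

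For the map variation, only $-\alpha|d\varphi|^2$ inside $(S^{\varphi})^2$, $-\alpha\varphi^*\eta$ inside $\mbox{Ric}^{\varphi}$ and $|\tau^g(\varphi)|^2$ depend on $\varphi$, so one uses \eqref{var energia se cambio mappa} and \eqref{var bi energ rispetto mappa} together with direct differentiations such as $\delta_\varphi(\varphi^*\eta)_{ij}=v^a_i\varphi^a_j+v^a_j\varphi^a_i$. After integration by parts the result takes the shape $\alpha\int_M(J^a,v^a)\mu_g$ in which the five terms of \eqref{def of J 4} are recovered as follows: $\frac{2}{3}S^{\varphi}\tau(\varphi)^a$ comes from $\frac{1}{6}\int(S^{\varphi})^2\mu_g$; $-\frac{1}{3}S^{\varphi}_j\varphi^a_j$ and $-2R^{\varphi}_{jk}\varphi^a_{jk}$ come from $-\frac{1}{2}\int|\mbox{Ric}^{\varphi}|^2\mu_g$ after applying \eqref{comm rule der terza phi} to exchange covariant derivatives on $\varphi$; finally $2\tau(\varphi)^b\varphi^b_j\varphi^a_j$ and $-\tau_2(\varphi)^a$ come from $-\frac{\alpha}{2}\int|\tau^g(\varphi)|^2\mu_g$ via \eqref{def bi tension} and \eqref{var bi energ rispetto mappa}.

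The main obstacle is purely computational: keeping track of all the integrations by parts so that every term of \eqref{scritt alt phi bach m=4} and \eqref{def of J 4} emerges with the correct coefficient, and correctly commuting covariant derivatives via \eqref{general commutation rule tensor field}, \eqref{comm rule der terza phi} and \eqref{comm rule der quarta phi}. Once the two variation lemmas are proved the theorem is immediate. The final assertion under the submersion assumption follows at once from \hyperref[rmk phi bach privo di div per m uguale a 4]{Remark \ref*{rmk phi bach privo di div per m uguale a 4}}: if $B^{\varphi}=0$ then $\mbox{div}(B^{\varphi})=0$, hence $\alpha J^a\varphi^a_i=0$ identically, and since $d\varphi$ has maximal rank almost everywhere we conclude $J=0$.
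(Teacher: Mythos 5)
Your proposal follows essentially the same route as the paper: the two partial variations are computed in Lemma \ref{lemma variazione bach operator rispetto metrica} and Lemma \ref{lemma variazione bach operator rispetto mappa}, identifying the metric gradient with $B^{\varphi}$ via \eqref{scritt alt phi bach m=4} and the map gradient with $\alpha J$, after which the theorem and the submersion remark follow exactly as you describe. The only slight inaccuracy is in your bookkeeping of where the terms of $J_4$ originate (for instance $2\tau(\varphi)^b\varphi^b_j\varphi^a_j$ actually arises from the $|\mbox{Ric}^{\varphi}|^2$ piece through the generalized Schur identity \eqref{div of phi Ricci}, not from the bi-energy term), but this does not affect the validity of the strategy.
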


	From now on, when is clear from the context, we omit the dependence on $g$. To prove \hyperref[theom phi bach flat punti critici bach]{Theorem \ref*{theom phi bach flat punti critici bach}} we need to evaluate variations of $\mathcal{B}$ both with respect to $\varphi$ and to $g$. In the next Lemma we compute the variations of the Bach functional with respect to variations of the map.
	\begin{lemma}\label{lemma variazione bach operator rispetto mappa}
		Let $(g,\varphi)\in\mathcal{S}\times \mathcal{F}$. Let $v\in\Gamma(\varphi^{-1}TN)$ and $\varphi_t$ as in \eqref{def di d phi t}. Then, in the notations above,
		\begin{equation}\label{var B rispetto mappa}
		\begin{aligned}
		\left.\frac{d}{dt}\right|_{t=0}\mathcal{B}^{\varphi_t}(g)=&\alpha \int_M\left(\frac{m}{2(m-1)}S^{\varphi}\tau(\varphi)^a-\frac{m-2}{2(m-1)}S^{\varphi}_i\varphi^a_i-2R^{\varphi}_{ij}\varphi^a_{ij}-\tau_2(\varphi)\right)v^a\mu_g\\
		&+2\alpha^2\int_M\tau(\varphi)^b\varphi^b_i\varphi^a_iv^a\mu_g.
		\end{aligned}
		\end{equation}
		In particular, if $m=4$
		\begin{equation}\label{var B rispetto mappa per m=4}
		\left.\frac{d}{dt}\right|_{t=0}\mathcal{B}^{\varphi_t}(g)=\alpha\int_M(J,v)\mu,
		\end{equation}
		where $J=J_4$ is defined as in \eqref{def of J 4}.
	\end{lemma}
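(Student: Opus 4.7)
The plan is to start from the compact expression \eqref{scrittura comoda per Bach operator} for $\mathcal{B}$, vary each of the three integrands separately, and assemble the result. Since $g$ is fixed during the variation of $\varphi$, the volume form $\mu_g$ remains constant, so
\begin{equation*}
\left.\frac{d}{dt}\right|_{t=0}\mathcal{B}^{\varphi_t}(g)=\int_M\left(\frac{m}{4(m-1)}S^{\varphi}\dot{S}^{\varphi}-R^{\varphi}_{ij}\dot{R}^{\varphi}_{ij}\right)\mu_g-\alpha\left.\frac{d}{dt}\right|_{t=0}E_2^g(\varphi_t),
\end{equation*}
and the last piece is handed to us directly by \eqref{var bi energ rispetto mappa} as $-\alpha\int_M\tau_2(\varphi)^av^a\mu_g$.

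Next I would compute the pointwise variations of $S^{\varphi}$ and $\mbox{Ric}^{\varphi}$. Because $S$ and $\mbox{Ric}$ are metric quantities and hence independent of $\varphi$, only the $\alpha|d\varphi|^2$ and $\alpha\,\varphi^{*}\eta$ parts contribute. Using $\left.\tfrac{d}{dt}\right|_{t=0}(\varphi_t)^a_i=v^a_i$, which is the covariant derivative of $v$ in the pullback bundle (as already employed in the proof of \eqref{var energia se cambio mappa}),
\begin{equation*}
\dot{S}^{\varphi}=-2\alpha\,\varphi^a_iv^a_i,\qquad \dot{R}^{\varphi}_{ij}=-\alpha\bigl(\varphi^a_iv^a_j+\varphi^a_jv^a_i\bigr).
\end{equation*}

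The crucial step is the subsequent integration by parts. For the scalar term this is straightforward and gives $\frac{m\alpha}{2(m-1)}\int_M\bigl(S^{\varphi}_i\varphi^a_i+S^{\varphi}\tau(\varphi)^a\bigr)v^a\mu_g$. For the Ricci term, after using the symmetry of $R^{\varphi}$ and integrating by parts, one is left with $-2\alpha\int_M\bigl(R^{\varphi}_{ij,j}\varphi^a_i+R^{\varphi}_{ij}\varphi^a_{ij}\bigr)v^a\mu_g$. Here I would invoke the generalised Schur identity \eqref{div of phi Ricci} together with \eqref{div stress energy tensor} to obtain $R^{\varphi}_{ij,j}=\tfrac{1}{2}S^{\varphi}_i-\alpha\tau(\varphi)^b\varphi^b_i$. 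This substitution is the heart of the argument: it both releases a second $S^{\varphi}_i\varphi^a_i$ contribution that combines with the scalar piece and produces the quadratic-in-$\alpha$ term $2\alpha^{2}\tau(\varphi)^b\varphi^b_i\varphi^a_i$ that appears on the second line of \eqref{var B rispetto mappa}.

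Finally, I would simply collect coefficients. The two $S^{\varphi}_i\varphi^a_i$ contributions combine with weight $\tfrac{m\alpha}{2(m-1)}-\alpha=-\tfrac{m-2}{2(m-1)}\alpha$, while $S^{\varphi}\tau(\varphi)^a$, $R^{\varphi}_{ij}\varphi^a_{ij}$, $\tau_2(\varphi)^a$, and $\tau(\varphi)^b\varphi^b_i\varphi^a_i$ acquire the respective weights $\tfrac{m\alpha}{2(m-1)}$, $-2\alpha$, $-\alpha$, and $2\alpha^{2}$, yielding \eqref{var B rispetto mappa}. Setting $m=4$ and comparing with \eqref{def of J 4} then gives \eqref{var B rispetto mappa per m=4}. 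The main obstacle is not any single variation, each of which is routine, but the careful bookkeeping required to track signs and the two different origins of $\alpha$ (from the $\varphi$-curvature definitions and from the Schur identity) so that the final linear combination collapses to $J$.
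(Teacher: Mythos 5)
Your proposal is correct and follows essentially the same route as the paper: vary $(S^{\varphi})^2$ and $|\mbox{Ric}^{\varphi}|^2$ via $\dot{S}^{\varphi}=-2\alpha\varphi^a_iv^a_i$ and $\dot{R}^{\varphi}_{ij}=-\alpha(\varphi^a_iv^a_j+\varphi^a_jv^a_i)$, integrate by parts, substitute the generalized Schur identity \eqref{div of phi Ricci} together with \eqref{div stress energy tensor} to produce the $\alpha^2$ term, and quote \eqref{var bi energ rispetto mappa} for the bi-energy contribution. The coefficient bookkeeping you describe, including $\tfrac{m\alpha}{2(m-1)}-\alpha=-\tfrac{m-2}{2(m-1)}\alpha$, matches the paper's computation exactly.
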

	\begin{proof}
		Using the relation between $S^{\varphi}=S-\alpha|d\varphi|^2$ we get
		\begin{equation}\label{var S phi rispetto phi}
		\left.\frac{d}{dt}\right|_{t=0}S^{\varphi_t}=-2\alpha \varphi^a_iv^a_i,
		\end{equation}
		hence
		\begin{equation*}
		\left.\frac{d}{dt}\right|_{t=0}(S^{\varphi_t})^2=2S^{\varphi}\left.\frac{d}{dt}\right|_{t=0}S^{\varphi_t}=-4\alpha S^{\varphi}\varphi^a_iv^a_i.
		\end{equation*}
		Then, using the divergence theorem
		\begin{equation}\label{var s phi quadro risp phi}
		\left.\frac{d}{dt}\right|_{t=0}\int_M(S^{\varphi_t})^2\mu=4\alpha\int_M(S^{\varphi}_i\varphi^a_i+S^{\varphi}\varphi^a_{ii})v^a.
		\end{equation}
		
		Using the relation between $\mbox{Ric}^{\varphi}$ and $\mbox{Ric}$ we get
		\begin{equation*}
		\left.\frac{d}{dt}\right|_{t=0}R^{\varphi}_{ij}=-\alpha(\varphi^a_iv^a_j+\varphi^a_jv^a_i),
		\end{equation*}
		hence
		\begin{equation*}
		\left.\frac{d}{dt}\right|_{t=0}|\mbox{Ric}^{\varphi}|^2=2R^{\varphi}_{ij}\left.\frac{d}{dt}\right|_{t=0}R^{\varphi}_{ij}=-4\alpha R^{\varphi}_{ij}\varphi^a_jv^a_i.
		\end{equation*}
		Then, using the divergence theorem
		\begin{equation*}
		\left.\frac{d}{dt}\right|_{t=0}\int_M|\mbox{Ric}^{\varphi}|^2\mu=4\alpha \int_M(R^{\varphi}_{ij,i}\varphi^a_j+R^{\varphi}_{ij}\varphi^a_{ij})v^a.
		\end{equation*}
		Using \eqref{div of phi Ricci} the above yields
		\begin{equation}\label{var ric phi quadro risp phi}
		\left.\frac{d}{dt}\right|_{t=0}\int_M|\mbox{Ric}^{\varphi}|^2\mu=2\alpha \int_M(S^{\varphi}_i\varphi^a_i-2\alpha \tau(\varphi)^b\varphi^b_i\varphi^a_i+2R^{\varphi}_{ij}\varphi^a_{ij})v^a.
		\end{equation}
		
		Using \eqref{var s phi quadro risp phi}, \eqref{var ric phi quadro risp phi} and the definition of bi-tension we get \eqref{var B rispetto mappa}.
	\end{proof}
	Now we deal with variations of the metric.
	\begin{lemma}\label{lemma variazione bach operator rispetto metrica}
		Let $(g,\varphi)\in\mathcal{S}\times \mathcal{F}$. Let $h\in S^2(M)$ and $g_t$ as in \eqref{def di g t}, then
		\begin{equation}\label{var B rispetto metrica}
			\begin{aligned}
			\left.\frac{d}{dt}\right|_{t=0}\mathcal{B}^{\varphi}(g_t)=&\int_M\left[R^t_{ikj}R^{\varphi}_{tk}+\frac{1}{2}\Delta R^{\varphi}_{ij}-\frac{m-2}{4(m-1)}S^{\varphi}_{ij}-\frac{m}{4(m-1)}S^{\varphi}R^{\varphi}_{ij}\right]h_{ij}\mu\\
			&+\alpha\int_M\left(\varphi^a_{kk}\varphi^a_{ij}-R^{\varphi}_{ik}\varphi^a_k\varphi^a_j\right)h_{ij}\mu\\
			&+\int_M\left(\frac{m}{16(m-1)}(S^{\varphi})^2-\frac{1}{4(m-1)}\Delta S^{\varphi}-\frac{1}{4}|\mbox{Ric}^{\varphi}|^2-\frac{\alpha}{4}|\tau(\varphi)|^2\right)\delta_{ij}h_{ij}\mu.
			\end{aligned}
		\end{equation}
		In particular, if $m=4$,
		\begin{equation}\label{var B rispetto metrica m =4}
		\left.\frac{d}{dt}\right|_{t=0}\mathcal{B}^{\varphi}(g_t)=\int_M\langle B^{\varphi},h\rangle\mu.
		\end{equation}
	\end{lemma}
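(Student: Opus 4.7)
The plan is to differentiate term by term the expression
\[
\mathcal{B}^{\varphi}(g) = \int_M\left(\frac{m}{8(m-1)}(S^{\varphi}_g)^2-\frac{1}{2}|\mbox{Ric}^{\varphi}_g|^2_g-\frac{\alpha}{2}|\tau^g(\varphi)|^2\right)\mu_g
\]
given by \eqref{scrittura comoda per Bach operator}, integrate by parts to put all derivatives on test tensor $h$, and then reassemble the pointwise identity so that it matches \eqref{scritt alt phi bach} for general $m$, and \eqref{scritt alt phi bach m=4} when $m=4$.

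First I would treat the two \textquotedblleft scalar\textquotedblright\ pieces. Using \eqref{var phi scalar curvature} and \eqref{var vol form}, the variation of $\int_M (S^{\varphi})^2\mu$ yields
\[
2\int_M S^{\varphi}\left(-\Delta(\mathrm{tr}_gh)+\mathrm{div}(\mathrm{div}(h))-\langle h,\mbox{Ric}^{\varphi}\rangle\right)\mu + \int_M(S^{\varphi})^2\cdot\tfrac12\mathrm{tr}_g(h)\,\mu,
\]
and two integrations by parts move the Laplacian and the double divergence onto $S^{\varphi}$, producing the terms $-\Delta S^{\varphi}\eta_{ij}$ and $S^{\varphi}_{ij}$ paired against $h_{ij}$. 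Second, the total bi-energy piece is already in final form through \eqref{var bi energ rispetto metrica}, and produces precisely $-\tfrac{\alpha}{2}\langle T_2^g,h\rangle$; I would rewrite the resulting expression in the local coframe.

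The hard part is the variation of $\int_M|\mbox{Ric}^{\varphi}|^2\mu$. The integrand varies through three sources: $\dot g^{ij}$ and $\dot g^{kl}$ raising the two pairs of indices (giving $-2(R^{\varphi})^2_{ij}h_{ij}$), the derivative $\dot{R}^{\varphi}_{ij}$ given by \eqref{variation phi ricci}, and the volume form. Substituting \eqref{variation phi ricci} and integrating by parts twice transfers the two covariant derivatives onto $\mbox{Ric}^{\varphi}$; the commutation formula \eqref{general commutation rule tensor field} together with the contracted second Bianchi identity in its $\varphi$-form \eqref{div of phi Ricci} produces the following contributions against $h_{ij}$: a Laplacian $\Delta R^{\varphi}_{ij}$, a Hessian $S^{\varphi}_{ij}$ (via $\mathrm{div}(\mathrm{Ric}^{\varphi})$), a curvature term $R^t{}_{ikj}R^{\varphi}_{tk}$ coming from commuting the two covariant derivatives, and $\alpha$-terms of the form $\varphi^a_{kk}\varphi^a_{ij}$ and $R^{\varphi}_{ik}\varphi^a_k\varphi^a_j$ coming from the divergence of $T^g$ (cf.\ \eqref{div stress energy tensor}) applied through \eqref{div of phi Ricci}. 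This is essentially the same bookkeeping used in \cite{B} to derive the Bach tensor in the vacuum case, but now every occurrence of $\mbox{Ric}$ and $S$ carries a $\varphi$-correction, and the bi-energy term must be tracked in parallel.

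Collecting all contributions with the coefficients $\tfrac{m}{8(m-1)}$, $-\tfrac12$, $-\tfrac{\alpha}{2}$ and the Jacobian factor $\tfrac12\mathrm{tr}_g(h)\mu$ from \eqref{var vol form} yields \eqref{var B rispetto metrica} after regrouping. For $m=4$, the coefficients in front of the $(R^{\varphi})^2_{ij}$ term and of $|\mbox{Ric}^{\varphi}|^2\eta_{ij}$ simplify, and a direct comparison with the alternative expression \eqref{scritt alt phi bach m=4} for $B^{\varphi}$ shows that the integrand is exactly $\langle B^{\varphi},h\rangle$, giving \eqref{var B rispetto metrica m =4}. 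The main obstacle is purely computational: the integration-by-parts step for $|\mbox{Ric}^{\varphi}|^2$ generates many curvature-commutator and $\varphi$-coupled terms, and the right bookkeeping (using \eqref{div of phi Ricci} at exactly the right moment) is needed so that the final expression collapses onto the alternative form of $B^{\varphi}$ in dimension four.
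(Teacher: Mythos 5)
Your proposal follows essentially the same route as the paper: starting from the rewriting \eqref{scrittura comoda per Bach operator}, differentiating the three pieces separately via \eqref{var phi scalar curvature}, \eqref{variation phi ricci}, \eqref{var vol form} and \eqref{var bi energ rispetto metrica}, integrating by parts, commuting derivatives and invoking the generalized Schur identity \eqref{div of phi Ricci} to handle $\int_M|\mbox{Ric}^{\varphi}|^2\mu$, and finally matching the integrand against \eqref{scritt alt phi bach m=4} when $m=4$. The sketch identifies all the ingredients the paper actually uses, so it is correct in strategy and there is nothing substantive to add.
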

	\begin{proof}
		By definition
		\begin{equation*}
		|\mbox{Ric}^{\varphi}_{g_t}|^2_{g_t}=R^{\varphi}_{ij}(t)R^{\varphi}_{tk}(t)g^{it}(t)g^{jk}(t),
		\end{equation*}
		hence
		\begin{equation*}
		\left.\frac{d}{dt}\right|_{t=0}|\mbox{Ric}^{\varphi}_{g_t}|^2_{g_t}=2(\dot{R}^{\varphi}_{ij}g^{it}+R^{\varphi}_{ij}\dot{g}^{it})R^{\varphi}_{tk}g^{jk}.
		\end{equation*}
		In a local orthonormal coframe, using \eqref{var inversa metrica g} and \eqref{variation phi ricci}, we get
		\begin{equation}\label{variation norm phi ricci}
		\left.\frac{d}{dt}\right|_{t=0}|\mbox{Ric}^{\varphi}_{g_t}|^2_{g_t}=2h_{ik,jk}R^{\varphi}_{ij}-\mbox{tr}(h)_{ij}R^{\varphi}_{ij}-(\Delta h)_{ij}R^{\varphi}_{ij}-2(R^{\varphi})^2_{ij}h_{ij}
		\end{equation}
		
		We have
		\begin{equation*}
		\left.\frac{d}{dt}\right|_{t=0}\int_M|\mbox{Ric}^{\varphi}_{g_t}|^2_{g_t}\mu_{g_t}=\int_M\left(\left.\frac{d}{dt}\right|_{t=0}|\mbox{Ric}^{\varphi}_{g_t}|^2_{g_t}\right)\mu+\int_M|\mbox{Ric}^{\varphi}|^2\left.\frac{d}{dt}\right|_{t=0}\mu_{g_t},
		\end{equation*}
		so that, using \eqref{variation norm phi ricci} and \eqref{var vol form},
		\begin{equation}\label{form che serve per var integrale norma phi ricci}
		\begin{aligned}
		\left.\frac{d}{dt}\right|_{t=0}\int_M|\mbox{Ric}^{\varphi}_{g_t}|^2_{g_t}\mu_{g_t}=&\int_M(2h_{ik,jk}R^{\varphi}_{ij}-\mbox{tr}(h)_{ij}R^{\varphi}_{ij}-(\Delta h)_{ij}R^{\varphi}_{ij}-2(R^{\varphi})^2_{ij}h_{ij})\mu\\
		&+\frac{1}{2}\int_M|\mbox{Ric}^{\varphi}|^2\mbox{tr}(h)\mu,
		\end{aligned}
		\end{equation}
		that is, using the divergence theorem,
		\begin{equation}\label{form che serve per var int norma phi ricci quadro con integrale}
		\left.\frac{d}{dt}\right|_{t=0}\int_M|\mbox{Ric}^{\varphi}_{g_t}|^2_{g_t}\mu_{g_t}=\int_M\left(2R^{\varphi}_{ik,jk}-\delta_{ij}R^{\varphi}_{tk,tk}-R^{\varphi}_{ij,kk}-2(R^{\varphi})^2_{ij}+\frac{1}{2}|\mbox{Ric}^{\varphi}|^2\delta_{ij}\right)h_{ij}\mu.
		\end{equation}
		
		The following commutation relation holds (see \eqref{general commutation rule tensor field})
		\begin{equation*}
		R^{\varphi}_{ik,jk}=R^{\varphi}_{ik,kj}+R^t_{ijk}R^{\varphi}_{tk}+R^t_{kjk}R^{\varphi}_{it}.
		\end{equation*}
		Using the generalized Schur's identity and the definition of $\varphi$-Ricci the above reads
		\begin{equation*}
		R^{\varphi}_{ik,jk}=\frac{1}{2}S^{\varphi}_{ij}-\alpha(\varphi^a_{kk}\varphi^a_i)_j+R^t_{ijk}R^{\varphi}_{tk}+R^{\varphi}_{tj}R^{\varphi}_{it}+\alpha\varphi^a_t\varphi^a_jR^{\varphi}_{it},
		\end{equation*}
		that is,
		\begin{equation}\label{form che serve per var int norma phi ricci quadro}
		R^{\varphi}_{ik,jk}=\frac{1}{2}S^{\varphi}_{ij}-R^t_{ikj}R^{\varphi}_{tk}+(R^{\varphi})^2_{ij}+\alpha R^{\varphi}_{ik}\varphi^a_k\varphi^a_j-\alpha\varphi^a_{kkj}\varphi^a_i-\alpha\varphi^a_{kk}\varphi^a_{ij}.
		\end{equation}
		Moreover, taking the trace of the above we infer
		\begin{equation}\label{form che serve per var int norma phi ricci quadro traccia}
		R^{\varphi}_{tk,tk}=\frac{1}{2}\Delta S^{\varphi}-\alpha\varphi^a_{ttk}\varphi^a_k-\alpha|\tau(\varphi)|^2.
		\end{equation}
		By plugging \eqref{form che serve per var int norma phi ricci quadro} and \eqref{form che serve per var int norma phi ricci quadro traccia} into \eqref{form che serve per var int norma phi ricci quadro con integrale} we obtain
		\begin{align*}
		\left.\frac{d}{dt}\right|_{t=0}\int_M|\mbox{Ric}^{\varphi}_{g_t}|^2_{g_t}\mu_{g_t}=&\int_M\left(S^{\varphi}_{ij}-2R^t_{ikj}R^{\varphi}_{tk}+2\alpha R^{\varphi}_{ik}\varphi^a_k\varphi^a_j-2\alpha\varphi^a_{kkj}\varphi^a_i-2\alpha\varphi^a_{kk}\varphi^a_{ij}-\Delta R^{\varphi}_{ij}\right)h_{ij}\mu\\
		&-\int_M\left(\frac{1}{2}\Delta S^{\varphi}-\alpha\varphi^a_{ttk}\varphi^a_k-\alpha|\tau(\varphi)|^2-\frac{1}{2}|\mbox{Ric}^{\varphi}|^2\right)\delta_{ij}h_{ij}\mu.
		\end{align*}
		
		Using \eqref{var phi scalar curvature} and \eqref{var vol form} we have 
		\begin{equation}\label{variation of integral of S phi quadro}
		\left.\frac{d}{dt}\right|_{t=0}\int_M(S^{\varphi})^2_{g_t}\mu_{g_t}=\int_M\left\langle 2\mbox{Hess}(S^{\varphi})-2S^{\varphi}\mbox{Ric}^{\varphi}+\left(\frac{(S^{\varphi})^2}{2}-2\Delta S^{\varphi}\right)g,h\right\rangle\mu.
		\end{equation}
		
		In conclusion, since
		\begin{equation*}
		\left.\frac{d}{dt}\right|_{t=0}\mathcal{B}^{\varphi}(g_t)=\frac{m}{8(m-1)}\left.\frac{d}{dt}\right|_{t=0}\int_M(S_{g_t}^{\varphi})^2\mu_{g_t}-\frac{1}{2}\left.\frac{d}{dt}\right|_{t=0}\int_M|\mbox{Ric}_{g_t}^{\varphi}|_{g_t}^2\mu_{g_t}-\frac{\alpha}{2}\left.\frac{d}{dt}\right|_{t=0}\int_M|\tau^{g_t}(\varphi)|^2\mu_{g_t},
		\end{equation*}
		by plugging the two relations above and using \eqref{var bi energ rispetto metrica} we obtain the validity of \eqref{var B rispetto metrica}.
		
		For $m=4$ the above gives
		\begin{align*}
		\left.\frac{d}{dt}\right|_{t=0}\mathcal{B}^{\varphi}(g_t)=\int_M\left[R^t_{ikj}R^{\varphi}_{tk}+\frac{1}{2}\Delta R^{\varphi}_{ij}-\frac{1}{6}S^{\varphi}_{ij}-\frac{1}{3}S^{\varphi}R^{\varphi}_{ij}+\alpha\left(\varphi^a_{kk}\varphi^a_{ij}-R^{\varphi}_{ik}\varphi^a_k\varphi^a_j\right)\right]h_{ij}\mu\\
		+\int_M\left(\frac{1}{12}(S^{\varphi})^2-\frac{1}{12}\Delta S^{\varphi}-\frac{1}{4}|\mbox{Ric}^{\varphi}|^2-\frac{\alpha}{4}|\tau(\varphi)|^2\right)\delta_{ij}h_{ij}\mu
		\end{align*}
		Recalling \eqref{scritt alt phi bach m=4}, that is,
		\begin{align*}
		B^{\varphi}_{ij}=&\frac{1}{2}R^{\varphi}_{ij,kk}+R^t_{ikj}R^{\varphi}_{tk}-\frac{1}{6}S^{\varphi}_{ij}-\frac{1}{3}S^{\varphi}R^{\varphi}_{ij}+\left(\frac{(S^{\varphi})^2}{12}-\frac{\Delta S^{\varphi}}{12}-\frac{1}{4}|\mbox{Ric}^{\varphi}|^2\right)\delta_{ij}\\
		&+\alpha\left(\varphi^a_{ij}\varphi^a_{kk}-\frac{1}{2}R^{\varphi}_{ki}\varphi^a_k\varphi^a_j- \frac{1}{2}R^{\varphi}_{kj}\varphi^a_k\varphi^a_i-\frac{1}{4}|\tau(\varphi)|^2\delta_{ij}\right),
		\end{align*}
		we get
		\begin{align*}
		\langle B^{\varphi},h\rangle=&\left[\frac{1}{2}\Delta R^{\varphi}_{ij}+R^t_{ikj}R^{\varphi}_{tk}-\frac{1}{6}S^{\varphi}_{ij}-\frac{1}{3}S^{\varphi}R^{\varphi}_{ij}+\left(\frac{(S^{\varphi})^2}{12}-\frac{\Delta S^{\varphi}}{12}-\frac{1}{4}|\mbox{Ric}^{\varphi}|^2\right)\delta_{ij}\right]h_{ij}\\
		&+\alpha\left(\varphi^a_{ij}\varphi^a_{kk}-R^{\varphi}_{ki}\varphi^a_k\varphi^a_j-\frac{1}{4}|\tau(\varphi)|^2\delta_{ij}\right)h_{ij}.
		\end{align*}
		Then we finally obtain the validity of \eqref{var B rispetto metrica m =4}.
	\end{proof}
	We are finally ready to give the
	\begin{proof}[Proof (of Theorem \ref*{theom phi bach flat punti critici bach}).]
		The proof follows immediately from \eqref{var B rispetto mappa per m=4} and \eqref{var B rispetto metrica m =4}. It remains only to observe that, if $\varphi$ is a submersion a.e. and $\varphi$-Bach vanishes on $M$, then $\varphi$-Bach is is divergence free and from \hyperref[rmk phi bach privo di div per m uguale a 4]{Remark \ref*{rmk phi bach privo di div per m uguale a 4}} we automatically get that $J=0$.
	\end{proof}
	
	Recall that \hyperref[cor phi bach invariante conforme per m uguale a 4]{Corollary \ref*{cor phi bach invariante conforme per m uguale a 4}} tell us that $\varphi$-Bach is a conformal invariant tensor for $m=4$. Now we provide an alternative proof of the conformal invariance of $\varphi$-Bach, at least when $(M,g)$ is a closed orientable four dimensional Riemannian manifold.
	\begin{prop}
		For $m=4$ the functional $\mathcal{B}$ is conformal invariant, that is,
		\begin{equation}\label{conf invariance of functional B}
		\mathcal{B}^{\varphi}(\widetilde{g})=\mathcal{B}^{\varphi}(g),
		\end{equation}
		where
		\begin{equation}
		\widetilde{g}=e^{-f}g,
		\end{equation}
		for some smooth function $f$ on $M$. As a consequence its gradient, that is $\varphi$-Bach, is a conformal invariant tensor.
	\end{prop}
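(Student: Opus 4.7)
The plan is to establish \eqref{conf invariance of functional B} not by unpacking the transformation laws of Section 3 and grinding through every term of \eqref{scrittura comoda per Bach operator}, but by differentiating along the conformal path. Setting $g_t := e^{-tf} g$ for $t \in [0,1]$, we have $g_0 = g$ and $g_1 = \widetilde{g}$, so it suffices to show that the function $t \mapsto \mathcal{B}^{\varphi}(g_t)$ is constant on $[0,1]$.

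At each $t$ the tangent vector is $\dot{g}_t = -f\,g_t \in S^2(M)$. Applying the variation formula \eqref{var B rispetto metrica m =4} of \hyperref[lemma variazione bach operator rispetto metrica]{Lemma \ref*{lemma variazione bach operator rispetto metrica}}, valid at every Riemannian metric, yields
\[
\frac{d}{dt}\mathcal{B}^{\varphi}(g_t) = \int_M \langle B^{\varphi}_{g_t}, -f\,g_t\rangle_{g_t}\mu_{g_t} = -\int_M f\cdot\mbox{tr}_{g_t}(B^{\varphi}_{g_t})\,\mu_{g_t}.
\]
The second key ingredient is the trace formula \eqref{traccia phi bach}: for $m=4$ the prefactor $m-4$ vanishes, so $\mbox{tr}(B^{\varphi}_{g_t}) = 0$ at every metric $g_t$. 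Consequently $\frac{d}{dt}\mathcal{B}^{\varphi}(g_t) \equiv 0$, and integrating from $0$ to $1$ gives $\mathcal{B}^{\varphi}(\widetilde{g}) = \mathcal{B}^{\varphi}(g)$, which is exactly \eqref{conf invariance of functional B}.

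There is no real obstacle: the argument is a direct combination of \hyperref[lemma variazione bach operator rispetto metrica]{Lemma \ref*{lemma variazione bach operator rispetto metrica}} and the already-established tracelessness of $B^{\varphi}$ in dimension four. The only point to take care of is that Lemma \ref*{lemma variazione bach operator rispetto metrica} is invoked at each intermediate metric $g_t$, not merely at $g_0 = g$; but the lemma is stated for an arbitrary $(g,\varphi)\in\mathcal{M}\times\mathcal{F}$, so this is automatic. As a final remark, note that this approach is a close analogue of the classical derivation of conformal invariance of $\int|W|^2\mu$ from the tracelessness of the standard Bach tensor, and recovers the conformal invariance of $B^{\varphi}$ for $m=4$ already proved analytically in \hyperref[cor phi bach invariante conforme per m uguale a 4]{Corollary \ref*{cor phi bach invariante conforme per m uguale a 4}} via a purely variational mechanism.
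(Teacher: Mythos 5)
Your argument is correct, and it takes a genuinely different route from the paper. The paper proves \eqref{conf invariance of functional B} by a direct computation: it shows that the integrand density $Q^{\varphi}_g$ of \eqref{def di Q phi} transforms under $\widetilde{g}=e^{-f}g$ by the addition of an exact divergence, $Q^{\varphi}_{\widetilde{g}}=Q^{\varphi}_g+\mbox{div}_g[P_g(f)]\mu_g$, for an explicit vector field $P_g(f)$; this requires assembling the transformation laws for $(S^{\varphi})^2$, $|\mbox{Ric}^{\varphi}|^2$ and $|\tau(\varphi)|^2$ plus a Bochner identity, but it yields a pointwise transgression formula (the $\varphi$-analogue of the $Q$-curvature identity) that is of independent interest. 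You instead integrate the first variation along the conformal path $g_t=e^{-tf}g$: by \hyperref[lemma variazione bach operator rispetto metrica]{Lemma \ref*{lemma variazione bach operator rispetto metrica}} applied at each $g_t$ with $\dot g_t=-fg_t$, the derivative is $-\int_M f\,\mbox{tr}_{g_t}(B^{\varphi}_{g_t})\,\mu_{g_t}$, which vanishes identically by \eqref{traccia phi bach} since $m=4$. This is the standard mechanism by which tracelessness of the gradient of a Riemannian functional forces constancy on conformal classes; it is shorter and conceptually cleaner, at the price of losing the explicit divergence identity and of leaning on two previously established facts (the variation formula \eqref{var B rispetto metrica m =4} and the trace identity \eqref{traccia phi bach}) rather than being self-contained. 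Both ingredients are available at the point where the Proposition is stated, the chain-rule step along the non-affine path is harmless for such a smooth local functional, and there is no circularity, since \eqref{traccia phi bach} does not depend on conformal invariance; so the proof stands.
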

	\begin{proof}
		For $m=4$ the functional $\mathcal{B}$ is given by
		\begin{equation*}
		\mathcal{B}^{\varphi}(g)=\frac{1}{2}\int_M\left(\frac{1}{3}(S_g^{\varphi})^2-|\mbox{Ric}_g^{\varphi}|_g^2-\alpha|\tau^g(\varphi)|^2\right)\mu_g.
		\end{equation*}
		
		To prove \eqref{conf invariance of functional B} it is sufficient to show the validity of
		\begin{equation*}
		Q^{\varphi}_{\widetilde{g}}=Q^{\varphi}_g+\mbox{div}_g[P_g(f)]\mu_g,
		\end{equation*}
		where the $4$-form $Q^{\varphi}_g$ is given by, for every Riemannian metric $g$ on $M$,
		\begin{equation}\label{def di Q phi}
		Q^{\varphi}_g:=\left(\frac{1}{3}(S_g^{\varphi})^2-|\mbox{Ric}_g^{\varphi}|_g^2-\alpha|\tau^g(\varphi)|^2\right)\mu_g
		\end{equation}
		and the vector field $P_g(f)$ is defined as
		\begin{equation*}
		P_g(f)=\left(S^{\varphi}+\Delta f-\frac{1}{2}|\nabla f|^2\right)\nabla f-(2\mbox{Ric}^{\varphi}+\mbox{Hess}(f))(\nabla f,\cdot)^{\sharp}.
		\end{equation*}
		\begin{itemize}
			\item For $m=4$ \eqref{transf law phi scalar con f} reads
			\begin{equation*}
			e^{-f}S^{\varphi}_{\widetilde{g}}=S^{\varphi}+3\Delta f-\frac{3}{2}|\nabla f|^2,
			\end{equation*}
			hence
			\begin{equation}\label{conf change S phi quadro}
			e^{-2f}\frac{1}{3}(S^{\varphi}_{\widetilde{g}})^2=\frac{1}{3}(S^{\varphi})^2+S^{\varphi}(2\Delta f-|\nabla f|^2)+3(\Delta f)^2+\frac{3}{4}|\nabla f|^4-3|\nabla f|^2\Delta f.
			\end{equation}
			\item For $m=4$ \eqref{transf law ricci phi con f} reads
			\begin{equation*}
			\mbox{Ric}^{\varphi}_{\widetilde{g}}=\mbox{Ric}^{\varphi}+\mbox{Hess}(f)+\frac{1}{2}df\otimes df+\frac{1}{2}(\Delta f-|\nabla f|^2) g.
			\end{equation*}
			hence
			\begin{align*}
			e^{-2f}|\mbox{Ric}^{\varphi}_{\widetilde{g}}|_{\widetilde{g}}^2=&|\mbox{Ric}^{\varphi}|^2+|\mbox{Hess}(f)|^2+\frac{1}{4}|\nabla f|^4+(\Delta f-|\nabla f|^2)^2+2R^{\varphi}_{ij}f_{ij}+R^{\varphi}_{ij}f_if_j\\
			&+S^{\varphi}(\Delta f-|\nabla f|^2)+f_{ij}f_if_j+(\Delta f-|\nabla f|^2)\Delta f+\frac{1}{2}(\Delta f-|\nabla f|^2)|\nabla f|^2,
			\end{align*}
			that is, using the definition of $\varphi$-Ricci
			\begin{align*}
			e^{-2f}|\mbox{Ric}^{\varphi}_{\widetilde{g}}|_{\widetilde{g}}^2=&|\mbox{Ric}^{\varphi}|^2+S^{\varphi}(\Delta f-|\nabla f|^2)+2R^{\varphi}_{ij}f_{ij}\\
			&+|\mbox{Hess}(f)|^2+R_{ij}f_if_j+\frac{3}{4}|\nabla f|^4+2(\Delta f)^2-\frac{5}{2}|\nabla f|^2\Delta f+f_{ij}f_if_j-\alpha |d\varphi(\nabla f)|^2.
			\end{align*}
			Using Bochner formula (see, for instance, $(1.176)$ of \cite{AMR})
			\begin{equation*}
			\frac{1}{2}\Delta|\nabla f|^2=|\mbox{Hess}(f)|^2+R_{ij}f_if_j+f_{iij}f_j,
			\end{equation*}
			from the above we conclude
			\begin{equation}\label{norma quadro phi ricci conf change}
			\begin{aligned}
			e^{-2f}|\mbox{Ric}^{\varphi}_{\widetilde{g}}|_{\widetilde{g}}^2=&|\mbox{Ric}^{\varphi}|^2+S^{\varphi}(\Delta f-|\nabla f|^2)+2R^{\varphi}_{ij}f_{ij}+\frac{1}{2}\Delta|\nabla f|^2-f_{iij}f_j\\
			&+\frac{3}{4}|\nabla f|^4+2(\Delta f)^2-\frac{5}{2}|\nabla f|^2\Delta f+f_{ij}f_if_j-\alpha |d\varphi(\nabla f)|^2.
			\end{aligned}
			\end{equation}
			\item For $m=4$ \eqref{tension phi conf change metric con f} reads
			\begin{equation*}
			e^{-f}\tau^{\widetilde{g}}(\varphi)=\tau^g(\varphi)-d\varphi(\nabla f),
			\end{equation*}
			hence
			\begin{equation}\label{norma tau phi quadro conf change}
			e^{-2f}|\tau^{\widetilde{g}}(\varphi)|^2=|\tau(\varphi)|^2+|d\varphi(\nabla f)|^2-2\tau(\varphi)^a\varphi^a_if_i.
			\end{equation}
			\item For $m=4$ \eqref{conf change metric vol element} with $f=2h$ gives
			\begin{equation}\label{conf change metric vol element per m 4 e f}
			e^{2f}\mu_{\widetilde{g}}=\mu.
			\end{equation}
		\end{itemize}
		Using \eqref{conf change S phi quadro}, \eqref{norma quadro phi ricci conf change}, \eqref{norma tau phi quadro conf change} and \eqref{conf change metric vol element per m 4 e f} and also the definition \eqref{def di Q phi} we get
		\begin{align*}
		Q^{\varphi}_{\widetilde{g}}=Q^{\varphi}_g+\left(S^{\varphi}\Delta f+(\Delta f)^2-\frac{1}{2}\Delta|\nabla f|^2-\frac{1}{2}|\nabla f|^2\Delta f-2R^{\varphi}_{ij}f_{ij}+f_{iij}f_j-f_{ij}f_if_j+2\alpha\tau(\varphi)^a\varphi^a_if_i\right)\mu.
		\end{align*}
		To conclude notice that, using \eqref{div of phi Ricci},
		\begin{equation*}
		R^{\varphi}_{ij}f_{ij}=(R^{\varphi}_{ij}f_{i})_j-R^{\varphi}_{ij,j}f_{i}=(R^{\varphi}_{ij}f_{i})_j-\frac{1}{2}S^{\varphi}_jf_j+\alpha \varphi^a_{jj}\varphi^a_if_i,
		\end{equation*}
		\begin{equation*}
		S^{\varphi}\Delta f=(S^{\varphi}f_j)_j-S^{\varphi}_jf_j,
		\end{equation*}
		\begin{equation*}
		f_{iij}f_j=(\Delta f f_j)_j-(\Delta f)^2,
		\end{equation*}
		and, finally,
		\begin{equation}
		f_{ij}f_if_j=\frac{1}{2}|\nabla f|^2_jf_j=\left(\frac{1}{2}|\nabla f|^2f_j\right)_j-\frac{1}{2}|\nabla f|^2\Delta f
		\end{equation}
		hold and thus
		\begin{equation*}
		Q^{\varphi}_{\widetilde{g}}=Q^{\varphi}_g+\left(S^{\varphi}f_j-\frac{1}{2}|\nabla f|^2_j-2R^{\varphi}_{ij}f_{i}+\Delta f f_j-\frac{1}{2}|\nabla f|^2f_j\right)_j\mu.
		\end{equation*}
		This concludes the proof.
	\end{proof}
	\begin{rmk}
		We choose the notation $Q^{\varphi}_g$ in the proof above because when $\varphi$ is constant we recover the $Q$-curvature introduced by Branson, see \cite{Br}.
	\end{rmk}

	\bigskip
	\centerline{\textbf{Aknowledgement}}

	The author wishes to thank Prof. Niel Martin M\o ller for introducing him to the theory of $Q$-curvature and for the valuable discussion.

	\end{document}